\setlist[itemize,description]{leftmargin=*}
\theoremstyle{plain}
    \newtheorem{thm}{Theorem}[section]
    \newtheorem{prop}[thm]  {Proposition}
    \newtheorem{lem}[thm]   {Lemma}
    \newtheorem{cor}[thm]   {Corollary}
    \newtheorem{athm}{Theorem}
\theoremstyle{definition}
    \newtheorem{rem}[thm]   {Remark}
    \newtheorem{defn}[thm]  {Definition}
    \newtheorem{nota}[thm]  {Notation}
\newcommand{\cA}{\mathcal{A}} 
\newcommand{\bB}{\mathbb{B}} 
\newcommand{\C}{\mathbb{C}} 
\newcommand{\fc}{\mathfrak{c}} 
\newcommand{\cD}{\mathcal{D}} 
\newcommand{\scD}{\mathscr{D}} 
\newcommand{\ud}{\underline{d}} 
\newcommand{\cF}{\mathcal{F}} 
\newcommand{\fri}{\mathfrak{i}} 
\newcommand{\cK}{\mathcal{K}} 
\newcommand{\N}{\mathbb{N}} 
\newcommand{\bQ}{\mathbb{Q}} 
\newcommand{\uQ}{\underline{Q}} 
\newcommand{\fm}{\mathfrak{m}} 
\newcommand{\scM}{\mathscr{M}} 
\newcommand{\R}{\mathbb{R}} 
\newcommand{\bs}{\mathbf{s}} 
\newcommand{\cS}{\mathcal{S}} 
\newcommand{\fS}{\mathfrak{S}} 
\newcommand{\scU}{\mathscr{U}} 
\newcommand{\uX}{\underline{X}} 
\newcommand{\cX}{\mathcal{X}} 
\newcommand{\cY}{\mathcal{Y}} 
\newcommand{\fw}{\mathfrak{w}} 
\newcommand{\Z}{\mathbb{Z}} 
\newcommand{\fM}{\mathfrak{M}} 
\newcommand{\Riem}{\mathfrak{Riem}} 
\newcommand{\fr}{\mathfrak{r}} 
\newcommand{\frst}{\fr_{\mathrm{st}}} 
\newcommand{\scF}{\mathscr{F}} 
\newcommand{\mscF}{\mathring{\scF}} 
\newcommand{\pr}{p} 
\newcommand{\mpr}{\mathring{\pr}} 
\newcommand{\cO}{\mathcal{O}} 
\newcommand{\ccO}{\bar{\cO}} 
\newcommand{\ccOud}{\ccO_{g,n}[\ud]} 
\newcommand{\tccOud}{\tilde{\ccO}_{g,n}[\ud]} 
\newcommand{\Harm}{\mathfrak{H}} 
\newcommand{\Harmud}{\Harm_{g,n}[\ud]} 
\newcommand{\cHarm}{\check{\Harm}} 
\newcommand{\cHarmud}{\cHarm_{g,n}[\ud]} 
\newcommand{\tHarm}{\tilde{\Harm}}
\newcommand{\tHarmud}{\tHarm_{g,n}[\ud]}
\newcommand{\forg}{\theta} 
\newcommand{\MTSO}{\mathrm{MTSO}} 
\newcommand{\Teich}{\mathfrak{T}} 
\newcommand{\sgn}{\Sigma_{g,n}}
\newcommand{\crit}{\mathcal{P}} 
\newcommand{\MonPol}{\mathfrak{MonPol}}
\newcommand{\NMonPol}{\mathfrak{NMonPol}}
\newcommand{\cv}{\mathfrak{cv}} 
\newcommand{\ccv}{\check{\mathfrak{cv}}} 
\newcommand{\bc}{\mathfrak{bc}} 
\newcommand{\uzeta}{\underline{\zeta}} 
\newcommand{\stab}{\mathrm{stab}} 
\newcommand{\SP}{\mathrm{SP}}
\DeclareSymbolFontAlphabet{\mathbb}{AMSb}
\DeclareSymbolFontAlphabet{\mathbbl}{bbold}
\newcommand{\one}{\mathbbl{1}} 
\newcommand{\geo}{^{\mathrm{geo}}} 
\newcommand{\cG}{\mathcal{G}} 
\newcommand{\Q}{\mathcal{Q}} 
\newcommand{\lc}{\mathrm{lc}} 
\newcommand{\fQ}{\mathfrak{Q}} 
\newcommand{\tr}{\mathbf{t}} 
\newcommand{\bl}{\mathbf{l}} 
\newcommand{\br}{\mathbf{r}} 
\newcommand{\klud}{(\!(\ud)\!)} 
\newcommand{\kld}[1]{(\!(#1)\!)} 
\newcommand{\ulambda}{\underline{\lambda}} 
\newcommand{\fP}{\mathfrak{P}} 
\newcommand{\Hur}{\mathrm{Hur}} 
\newcommand{\CHur}{\mathrm{CHur}} 
\newcommand{\mHurm}{\mathring{\mathrm{HM}}} 
\newcommand{\bHurm}{\breve{\mathrm{HM}}} 
\newcommand{\CmP}{\C\setminus P} 
\newcommand{\diamo}{\diamond} 
\newcommand{\bdel}{\breve{\partial}} 
\newcommand{\bdiamolr}{\breve{\diamo}^{\mathrm{lr}}} 
\newcommand{\zdiamleft}{z_{\diamond}^{\mathrm{l}}} 
\newcommand{\zdiamright}{z_{\diamond}^{\mathrm{r}}} 
\newcommand{\uU}{\underline{U}} 
\newcommand{\totmon}{\omega} 
\newcommand{\fU}{\mathfrak{U}} 
\newcommand{\fQext}{\fQ^{\mathrm{ext}}} 
\newcommand{\braiding}{\mathfrak{br}} 
\newcommand{\cR}{\mathcal{R}} 
\newcommand{\mcR}{\mathring{\cR}} 
\newcommand{\bH}{\mathbb{H}} 
\newcommand{\Arr}{\mathrm{Arr}} 
\newcommand{\del}{\partial} 
\newcommand{\cpt}{^{\infty}} 
\newcommand{\pa}[1]{\left(#1\right)}
\newcommand{\abs}[1]{\left|#1\right|}
\newcommand{\set}[1]{\left\{#1\right\}}
\newcommand{\sca}[1]{[\! [#1]\! ]}
\renewcommand{\phi}{\varphi}
\renewcommand{\epsilon}{\varepsilon}
\DeclareMathOperator{\Diff}{Diff}
\DeclareMathOperator{\Id}{Id}
\DeclareMathOperator{\Tel}{Tel}
\DeclareMathOperator{\colim}{colim}
\title[Moduli spaces as Hurwitz spaces]{Moduli spaces of Riemann surfaces as Hurwitz spaces}
\author{Andrea Bianchi}
\thanks{
This work was partially supported by the \emph{Deutsche
  Forschungsgemeinschaft} (DFG, German Research Foundation) under Germany’s
Excellence Strategy (\texttt{EXC-2047/1}, \texttt{390685813}),
by the
\emph{European Research Council} under the
European Union’s Horizon 2020 research and innovation programme (grant agreement No. \texttt{772960}),
and by the
\emph{Danish National Research Foundation} through the \emph{Copenhagen Centre for
Geometry and Topology} (\texttt{DNRF151}).
}
\email{anbi@math.ku.dk}
\address{Department of Mathematical Sciences, University of Copenhagen \newline
Universitetsparken 5, Copenhagen, 2100, Denmark}  
\date{\today}
\dedicatory{To Carl-Friedrich B\"odigheimer, on the occasion of his 65\textsuperscript{th} birthday}
\keywords{Hurwitz space, moduli space, Riemann-Roch, group completion}
\subjclass[2020]{
55P35,  
55P62,   
55R80, 
57T25.  
}
\begin{document}
\begin{abstract}
We consider the moduli space $\mathfrak{M}_{g,n}$ of Riemann surfaces of genus $g\ge0$ with $n\ge1$
ordered and directed marked points. For $d\ge 2g+n-1$ we show that $\mathfrak{M}_{g,n}$ is homotopy equivalent to a component
of the simplicial Hurwitz space $\mathrm{Hur}^{\Delta}(\mathfrak{S}_d^{\mathrm{geo}})$
associated with the partially multiplicative quandle $\mathfrak{S}_d^{\mathrm{geo}}$.
As an application, we give a new proof of the Mumford conjecture on the stable rational cohomology of moduli spaces of Riemann surfaces. We also provide a combinatorial model for the infinite loop space $\Omega^{\infty-2}\mathrm{MTSO}(2)$ of Hurwitz flavour.
\end{abstract}

\maketitle
\section{Introduction}
The aim of this article is to use the theory of generalised Hurwitz spaces from \cite{Bianchi:Hur1,Bianchi:Hur2,Bianchi:Hur3} to give
a combinatorial model for the moduli space $\fM_{g,n}$ of closed, connected Riemann surfaces of genus $g\ge0$ with $n\ge1$ ordered and directed
marked points; a \emph{directed} marked point is endowed with a non-zero tangent vector.
As an application, we reprove
the Mumford conjecture on the stable rational cohomology ring of moduli spaces \cite{Mumford}, originally proved by Madsen and Weiss \cite{MadsenWeiss}, which can be formulated as follows: there is an isomorphism of graded commutative $\bQ$-algebras
\[
 \lim_{g\to\infty}H^*(\fM_{g,1};\bQ)\cong\bQ[x_1,x_2,\dots]
\]
between the ring of stable rational cohomology classes of moduli spaces $\fM_{g,1}$, and a polynomial ring in infinitely many variables $x_i$, one in each even degree $2i$.

Our argument provides also a ``Hurwitz model'' for the space $\Omega^{\infty-2}\MTSO(2)$, where $\MTSO(2)$ is the spectrum used by Madsen and Weiss to prove the Mumford conjecture.

\subsection{Classical Hurwitz spaces and moduli spaces}
The connection between Hurwitz spaces and moduli spaces goes back to Hurwitz himself \cite{Hurwitz}: for $d\ge2$ and $k\ge d-1$, the classical Hurwitz space $\CHur_{\fS_d,k}^c$ parametrises equivalence classes $[\cS,f,\mathfrak{u}]$, where
\begin{itemize}
 \item $\cS$ is a connected, punctured Riemann surface;
 \item $f\colon\cS\to\C$ is a branched cover of degree $d$, admitting exactly  $k$ distinct branch values in $\C$, all of which are \emph{simple};
 \item $\mathfrak{u}\colon f^{-1}(\bH_{\le\upsilon})\cong\set{1,\dots,d}\times\bH_{\le\upsilon}$ is a trivialisation of $f$ over some lower 
half-plane $\bH_{\le\upsilon}:=\set{z\in\C\,|\,\Im(z)\le\upsilon}$ that is disjoint from all branch values, for a suitable $\upsilon\in\R$.
\end{itemize}
The equivalence relation is given by declaring $(\cS,f,\mathfrak{u})$ and $(\cS',f',\mathfrak{u}')$ equivalent if there is a biholomorphism $\chi\colon\cS\cong\cS'$ such that $f'=f\circ\chi$ and $\mathfrak{u}\equiv\mathfrak{u}'\circ\chi$ on $f^{-1}(\bH_{\le\min(\upsilon,\upsilon')})$.

Our notation here is the same used in \cite{EVW:homstabhur}: the ``$\mathrm{C}$'' indicates that we restrict to \emph{connected} branched covers, i.e. we require $\cS$ to be connected; the ``$c$'' denotes the conjugacy class of transpositions in the symmetric group $\fS_d$, where local monodromies take value: this ensures that branch values are simple.

Given $[\cS,f,\mathfrak{u}]\in \CHur_{\fS_d,k}^c$, represented by $(\cS,f,\mathfrak{u})$, we let $P=\set{z_1,\dots,z_k}\subset\C$ be the set of simple branch values of $f$, and restrict $f$ to a genuine $d$-fold cover
\[
f\colon f^{-1}(\CmP)\to \CmP.
\]
Choosing a basepoint $*_P\in\CmP$ ``below $P$'', i.e. inside the lower halfplane $\bH_{\le\upsilon}$ over which the covering is trivialised, we can consider the action of $\pi_1(\CmP,*_P)$ on the fibre $f^{-1}(*_P)\cong\set{1,\dots,d}$, and thus obtain a homomorphism of groups $\phi\colon\pi_1(\CmP,*_P)\to\fS_d$. In particular we can evaluate $\phi$ at the class of a simple loop $\gamma$ in $\CmP$ spinning clockwise around all points of $P$, obtaining the \emph{total monodromy of $[\cS,f,\mathfrak{u}]$}, which is a permutation in $\fS_d$: by \emph{loop} we mean a continuous map $\gamma\colon[0,1]\to\CmP$ sending both $0$ and $1$ to $*_P$, and by \emph{simple} we mean that $\gamma$ induces an injective map $[0,1]/\set{0,1}\to\CmP$.

Hurwitz proved that a complete invariant for connected components of $\CHur_{\fS_d,k}^c$ is given by the total monodromy. More precisely, for $\sigma\in\fS_d$
denote by $\CHur_{\fS_d,k,\sigma}^c$ the subspace of $\CHur_{\fS_d,k}^c$ of configurations with total monodromy equal to $\sigma$. 
Moreover, let $N(\sigma)$ be the word length norm of $\sigma$ with respect to all transpositions in $\fS_d$, i.e. the minimal $r\ge0$ such that $\sigma$ can be written as a product of $r$ transpositions. Then $\CHur_{\fS_d,k,\sigma}^c$ is connected and non-empty if $k-(2d-2-N(\sigma))$ is even and non-negative, and $\CHur_{\fS_d,k,\sigma}^c$ is empty otherwise.

In the following discussion we restrict for simplicity to the case $n=1$, i.e. we focus on the connection between Hurwitz spaces and moduli spaces of Riemann surfaces with a single marked point. Let $\lc_d$ denote the permutation $(1,2,\dots,d)\in\fS_d$ consisting of a unique \emph{long cycle}, and note that $N(\lc_d)=d-1$. There is a continuous, forgetful map
\[
\forg_{\CHur}\colon \CHur_{\fS_d,d-1+2g,\lc_d}^c\to\fM_{g,1},
\]
sending the class $[\cS,f,\mathfrak{u}]$ of a branched cover $f\colon\cS\to\C$ to the total space $\bar\cS$ of its \emph{completion} $f\colon \bar\cS\to\C P^1$; the smooth, closed Riemann surface $\bar\cS$ can be naturally equipped with a directed marked point $Q$, namely the (unique) preimage along $f$ of $\infty\in\C P^1$, or in other words, the unique point in $\bar\cS\setminus\cS$; a tangent vector $X$ at $Q\in\bar\cS$ can also be chosen in a canonical way, pointing in the direction of $\set{1}\times\bH_{\le\upsilon}\subset\cS\subset\bar\cS$, where we use $\mathfrak{u}$, the trivialisation of $f$ over some lower half-plane in $\C$, to embed $\set{1,\dots,d}\times\bH_{\le\upsilon}$ into $\cS$. The fact that both the source and the target of the map $\forg$ are connected spaces implies that $\forg$ is a $0$-connected map.\footnote{In fact, 
as mentioned in \cite{EVW:homstabhur}, Severi first proved that $\fM_g$ is connected by constructing a surjective map similar to $\forg$ with source a connected component of a Hurwitz space. At the time of Severi, Teichm\"uller theory, and in particular the contractibility of $\Teich_g$, was not available yet!}. A natural question arises: \emph{how highly connected is the map $\forg$?}

For $d=2$ and $g=1$, the map $\forg_{\CHur}\colon \CHur_{\fS_2,3,\lc_2}^c\to\fM_{1,1}$ happens to be a homotopy equivalence: this follows from the well-known fact that every closed Riemann surface $\bar\cS$ of genus $1$ with a marked point $Q$ admits a unique involution fixing the point $Q$: in fact $(\bar\cS,Q)$ can be considered as an elliptic curve, and the involution is the inversion in the group structure. However, it turns out that the previous is a very special case, and in fact $\forg_{\CHur}$ is not even a $1$-connected map for $g\ge2$:
note for instance that $H_1(\CHur_{\fS_d,d-1+2g,\lc_d}^c)$ is infinite, as it admits a surjection onto $\Z$, regarded as $H_1$ of the unordered configuration space of $d-1+2g$ points in $\C$; on the other hand
$H_1(\fM_{g,1})$ is finite for $g\ge2$.

In fact $\forg_{\CHur}\colon \CHur_{\fS_d,d-1+2g,\lc_d}^c\to\fM_{g,1}$ is the restriction of another map, whose target is still the moduli space $\fM_{g,1}$, but whose domain of definition is a larger space, which we can think of as a \emph{completion} of $\CHur_{\fS_d,d-1+2g,\lc_d}^c$. In this article, for $g\ge0$ and $d\ge1$, we introduce the space $\ccO_{g,1}[d]$: it contains
equivalence classes $[\cS,f,\mathfrak{u}]$ of $d$-fold branched covers $f\colon\cS\to\C$ equipped with a trivialisation $\mathfrak{u}$ over some lower half-plane $\bH_{\le\upsilon}$,
such that the total monodromy is equal to $\lc_d\in\fS_d$, and such that the total space $\bar\cS$ of the compactified covering $f\colon\bar\cS\to\C P^1$ is a smooth Riemann surface of genus $g$. Comparing with the definition of $\CHur_{\fS_d,d-1+2g,\lc_d}^c$, there is only one requirement which we drop: we no longer require that all branch values of $f$ in $\C$ be \emph{simple}.
As we will see, it is possible to identify $\CHur_{\fS_d,d-1+2g,\lc_d}^c$ with an open dense subspace of $\ccO_{g,1}[d]$, and there is a forgetful
map $\forg_{\ccO}\colon \ccO_{g,1}[d]\to\fM_{g,1}$ extending $\forg_{\CHur}$.

The space $\ccO_{g,1}[d]$ is introduced, in a similar way as in \cite{Bianchi:PhD},
as a closed subspace of a space $\cHarm_{g,1}[d]$, which is a mild variation of the \emph{slit configuration space}
$\Harm_{g,1}[d]$. The latter space $\Harm_{g,1}[d]$ was introduced by B\"odigheimer in \cite{Boedigheimer90}, in the case $d=1$, to give a combinatorial model of $\fM_{g,1}$; the construction was generalised in \cite{BH} to higher values of $d$
(see also \cite[Chapter 6]{Bianchi:PhD} for a short discussion).

The space $\cHarm_{g,1}[d]$ is also endowed with a natural forgetful map $\check\forg_{\Harm}$ towards $\fM_{g,1}$, which happens to be a homotopy equivalence for all $d\ge1$; the restriction of this map to $\ccO_{g,1}[d]$ is precisely the map $\forg_{\ccO}$ considered above, which further restricts to $\forg_{\CHur}$ on $\CHur_{\fS_d,d-1+2g,\lc_d}^c$.

\subsection{Statement of results}
We highlight five results from this article, focusing for simplicity on the case $n=1$.
\begin{athm}[Theorem \ref{thm:main1} for $n=1$]
 \label{thm:main1intro}
Let $d\ge 2g\ge0$; then the canonical map
\[
\forg_{\ccO}\colon \ccO_{g,1}[d]\to \fM_{g,1}
\]
is a homotopy equivalence: it starts in the completion $\ccO_{g,1}[d]$ of the classical Hurwitz space $\CHur_{\fS_d,d-1+2g,\lc_d}^c$ considered above; and it ends in the moduli space $\fM_{g,1}$.
\end{athm}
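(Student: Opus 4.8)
The plan is to run the comparison through the combinatorial model $\cHarm_{g,1}[d]$, whose homotopy type is already understood, and then to reduce everything to a fibrewise statement governed by Riemann--Roch. Since $\ccO_{g,1}[d]$ is a closed subspace of $\cHarm_{g,1}[d]$ and $\forg_{\ccO}$ is by construction the restriction of $\check\forg_{\Harm}$, the relevant maps sit in a commuting triangle
\[
\begin{tikzcd}
\ccO_{g,1}[d] \arrow[r, hook, "\iota"] \arrow[dr, "\forg_{\ccO}"'] & \cHarm_{g,1}[d] \arrow[d, "\check\forg_{\Harm}"] \\
& \fM_{g,1}.
\end{tikzcd}
\]
As recalled above, $\check\forg_{\Harm}$ is a homotopy equivalence for every $d\ge1$; hence $\forg_{\ccO}=\check\forg_{\Harm}\circ\iota$ will be a homotopy equivalence as soon as the inclusion $\iota$ is one. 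The whole problem thus reduces to showing that $\iota\colon\ccO_{g,1}[d]\hookrightarrow\cHarm_{g,1}[d]$ is a homotopy equivalence when $d\ge 2g$, a statement internal to the world of slit configurations and free of any further reference to $\fM_{g,1}$.

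First I would make the fibrewise picture precise. B\"odigheimer's uniformisation presents $\cHarm_{g,1}[d]$ as a space of pairs (surface, admissible potential) over $\fM_{g,1}$: over a fixed directed surface $(\bar\cS,Q,X)$ the fibre is a space of admissible harmonic potentials, and the content of $\check\forg_{\Harm}$ being an equivalence is that these fibres are contractible. Under this description $\ccO_{g,1}[d]$ consists of exactly those potentials arising from a genuine meromorphic function $f\colon\bar\cS\to\C P^1$ of degree $d$ with a single, totally ramified pole at $Q$ --- equivalently, from a branched cover with total monodromy $\lc_d$ and $f^{-1}(\infty)=\set{Q}$. The fibre of $\forg_{\ccO}$ over $(\bar\cS,Q,X)$ is therefore (a rigidification, fixed by the trivialisation data and the tangent vector $X$, of) the space of such functions.

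The crux is then a Riemann--Roch computation. A meromorphic function on $\bar\cS$ with poles only at $Q$ of order at most $d$ is a section of $\cO(dQ)$, and $h^0(\bar\cS,\cO(dQ))=d-g+1$ as soon as $d\ge 2g-1$. In the range $d\ge 2g$ one moreover has $h^0(\cO((d-1)Q))=d-g$ for \emph{every} genus-$g$ curve, so the strict inequality $h^0(dQ)>h^0((d-1)Q)$ holds uniformly and there exist functions with a pole of exact order $d$ at $Q$. Once the trivialisation $\mathfrak{u}$ and the tangent vector $X$ normalise the leading Laurent coefficient of $f$ at $Q$, the fibre of $\forg_{\ccO}$ over $(\bar\cS,Q,X)$ is identified with an affine space modelled on $H^0(\cO((d-1)Q))$, in particular nonempty and contractible. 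This is exactly where the hypothesis $d\ge 2g$ --- the case $n=1$ of the threshold $d\ge 2g+n-1$ --- enters, and I expect it to be the sharp algebraic point: for smaller $d$ the relevant linear system can be special and the fibre need not behave uniformly across $\fM_{g,1}$.

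Finally I would promote this fibrewise information to the global statement. Both $\forg_{\ccO}$ and $\check\forg_{\Harm}$ map onto $\fM_{g,1}$ with $\forg_{\ccO}=\check\forg_{\Harm}\circ\iota$; the homotopy fibre of $\check\forg_{\Harm}$ is contractible because it is a homotopy equivalence, and the homotopy fibre of $\forg_{\ccO}$ is the affine space computed above, also contractible. If both maps are (quasi-)fibrations --- or, more concretely, if the fibrewise affine structure assembles to a genuine fibrewise deformation retraction of $\cHarm_{g,1}[d]$ onto $\ccO_{g,1}[d]$ --- then $\iota$ is a weak, and hence (the spaces having the homotopy type of CW complexes) a genuine, homotopy equivalence, whence so is $\forg_{\ccO}$. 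The main obstacle I anticipate is not the Riemann--Roch input but this last globalisation: controlling the potential-theoretic model, and in particular the continuity of the construction across the locus of $\cHarm_{g,1}[d]$ where finite branch values collide and the branched-cover description degenerates, is where the substantive topological work will lie.
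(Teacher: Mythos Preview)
Your approach is essentially the paper's: factor $\forg_{\ccO}$ through $\check\forg_{\Harm}$, use Riemann--Roch to identify the fibre of $\forg_{\ccO}$ over $[\fr]$ with a nonempty complex affine space, and conclude. The computation you give is exactly Lemma~\ref{lem:ccOnonempty} (for $n=1$), and your identification of the threshold $d\ge 2g$ as the point where $h^0((d-1)Q)$ is forced by Riemann--Roch is correct.

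The gap you yourself flag at the end is real, and the paper closes it not by proving any abstract fibration criterion but by exhibiting $\ccO_{g,1}[d]\subset\cHarm_{g,1}[d]$ as the kernel of an explicit fibrewise-affine map. Namely, for $g>0$ the paper defines
\[
\textstyle\int\colon\cHarm_{g,1}[d]\longrightarrow H^1(\mscF_{g,1}),
\]
a map of real affine bundles over $\fM_{g,1}$ sending $[\fr,u,dv,v_{1,0}]$ to the de Rham class $[dv]\in H^1(\Sigma_{g,1}\setminus Q;\R)\cong\R^{2g}$; a harmonic conjugate $v$ exists globally on $\Sigma_{g,1}\setminus Q$ exactly when this class vanishes, so $\ccO_{g,1}[d]=\ker\int$. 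Your Riemann--Roch input then shows, by a dimension count, that $\int$ is surjective on each fibre, and the kernel of a fibrewise-surjective affine-bundle map onto a vector bundle is itself an affine bundle. This makes $\forg_{\ccO}$ a genuine affine bundle with fibre $\C^{d-g}$, so no separate continuity or quasifibration argument is needed across the degeneration loci you were worried about. (The case $g=0$, $n=1$ is handled directly: $\ccO_{0,1}[d]$ is the space of normalised monic degree-$d$ polynomials, visibly contractible.)
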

Before stating the second result, we recall the notion of partially multiplicative quandle (PMQ), introduced in \cite[Section 2]{Bianchi:Hur1}: roughly speaking, a PMQ is a set with well-behaved binary operations of conjugation and of partial product, satisfying conditions similar to those that the usual conjugation and product operations in a group satisfy.
For an \emph{augmented} PMQ $\Q$ (see \cite[Definition 4.9]{Bianchi:Hur1}), a \emph{simplicial Hurwitz space} $\Hur^{\Delta}(\Q)$ was constructed
in \cite[Definition 6.13]{Bianchi:Hur1}. The augmented PMQ $\fS_d\geo$ was introduced in \cite[Definition 7.1]{Bianchi:Hur1}, and plays a central role in this article: its underlying set is the symmetric group $\fS_d$, the operation of conjugation coincides with group conjugation, the partial product is only defined for certain couples of permutations, satisfying a \emph{geodesic} condition, and coincides with the usual product of permutations when it is defined.
\begin{athm}[Theorem \ref{thm:main2}]
\label{thm:main2intro}
Let $d\ge1$; then the PMQ $\fS_d\geo$ is a \emph{Poincar\'e} PMQ. In other words, all components of the simplicial Hurwitz space $\Hur^{\Delta}(\fS_d\geo)$ are topological manifolds.
\end{athm}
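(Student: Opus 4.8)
The plan is to verify the manifold property pointwise and locally, by exploiting the stratified structure of the simplicial Hurwitz space. A point of $\Hur^{\Delta}(\fS_d\geo)$ is represented by a configuration of finitely many points in a standard planar region, each decorated by a permutation in $\fS_d$, together with the simplicial coordinates recording their vertical organisation; the local deformations come from moving these decorated points and from merging them, the latter governed by the geodesic partial product of $\fS_d\geo$. First I would show that a neighbourhood of such a point splits, up to homeomorphism, as a product of a configuration-space factor---which records the horizontal and simplicial positions of the decorated points and is manifestly a manifold---and, for each decorated point, a \emph{local Hurwitz factor} recording the ways in which its label $\sigma$ may be resolved into a geodesic product of shorter labels as nearby branch points split apart. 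Since a product of Euclidean pieces is Euclidean, the whole problem reduces to identifying the homeomorphism type of these local factors.

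Next I would identify the local Hurwitz factor attached to $\sigma\in\fS_d$, where $N(\sigma)=r$, with the space of degree-$d$ branched covers of a small disk $D$ having boundary monodromy $\sigma$ and total branching number $r$, where the branch points are allowed to collide while the total space stays smooth; the ``fully collided'' cover, with a single branch point of monodromy $\sigma$ at the centre, is modelled cycle-by-cycle on the maps $z\mapsto z^{\ell}$ and is therefore smooth. The branch-locus map sends this local factor to $\mathrm{Sym}^r(\C)\cong\C^r$, exhibiting it as a branched covering of $\C^r$ ramified over the discriminant, and the claim I would prove is that the total space of this covering is again homeomorphic to $\C^r$, hence a manifold. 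This already occurs in the basic case $\sigma=\lc_3$, $r=2$, where the braid generator permutes the three geodesic factorisations $(ij)(jk)$ of the $3$-cycle cyclically, so the cover over the punctured discriminant is the connected triple cover $u\mapsto u^{3}$ and filling in the single point $u=0$ over the collision produces a smooth plane $\C^{2}$. The general mechanism is the same: the geodesic condition built into the partial product of $\fS_d\geo$ forces the transposition norm to be additive under collision, so that Riemann--Hurwitz keeps the genus constant and the cover deforms \emph{smoothly} through the collision rather than degenerating.

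The main obstacle will be to control this completion over the \emph{entire} discriminant for an arbitrary permutation $\sigma$, not merely over the smooth top stratum where two branch points meet. Concretely, one must show that the monodromy of the Hurwitz covering around each discriminant stratum, which is dictated by the braid-group action on the set of geodesic factorisations of $\sigma$ into transpositions, is exactly such that filling in the missing fibres yields a locally Euclidean space; equivalently, that the branched cover of $\C^r$ one obtains is a topological manifold along every stratum of collisions. Here the classical analysis of this braid action---transitive for a single long cycle, as in the connectedness proofs for $\fM_g$, and expected to reduce to the long-cycle case through the cycle decomposition of $\sigma$ in general---provides the combinatorial input, while the explicit slit-domain coordinates on $\cHarm_{g,1}[d]$ from B\"odigheimer's construction provide, when needed, honest manifold charts that bypass the covering-space bookkeeping. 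Assembling the resulting Euclidean neighbourhoods with the configuration-space factors from the first step then shows that every point of $\Hur^{\Delta}(\fS_d\geo)$ has a Euclidean neighbourhood, so each component is a topological manifold and $\fS_d\geo$ is Poincar\'e.
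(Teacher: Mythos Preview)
Your overall architecture matches the paper's: reduce to the ``local Hurwitz factor'' attached to a single permutation $\sigma$ (this is exactly what \cite[Theorem~9.3]{Bianchi:Hur2} does for the paper), then split along the cycle decomposition of $\sigma$ to reduce to the long-cycle case. The gap is in your step~3. Knowing the braid monodromy on geodesic factorisations---even knowing it is transitive for a long cycle---tells you the branched cover of $\mathrm{Sym}^r(\C)$ is \emph{connected}, but it does not by itself tell you the total space is a manifold along the deeper discriminant strata. Your $\lc_3$ example works because in codimension~1 the local picture is a cyclic branched cover of a disc, which is again a disc; but over higher-codimension collision strata you would need to verify, stratum by stratum, that the link of each point is a sphere, and the braid combinatorics alone does not hand you this. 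Your fallback to ``slit-domain coordinates'' is pointing in a helpful direction but is not yet an argument.

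The paper sidesteps this entirely with one concrete identification: for $\sigma=\lc_d$, the local Hurwitz factor is the space of branched $d$-fold covers of a disc with boundary monodromy $\lc_d$, and such a cover, once compactified over $\infty$, is a genus-0 surface carrying a degree-$d$ meromorphic function with a single pole---i.e.\ a monic polynomial of degree $d$, well-defined up to translation of the source. Thus the local factor is homeomorphic to $\NMonPol_d(\mcR)$, an open subset of $\NMonPol_d\cong\C^{d-1}$, and is therefore manifestly a manifold. The branch-locus map you wrote down is then exactly the critical-values map $\cv\colon\NMonPol_d\to\SP^{d-1}(\C)$, and the Euclidean structure on the total space comes for free from the polynomial coefficients rather than from any stratum-by-stratum analysis. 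This single observation replaces all of your proposed monodromy bookkeeping.
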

The connection between simplicial Hurwitz spaces and moduli spaces is established combining Theorem \ref{thm:main1intro} and the following theorem.
\begin{athm}[Theorem \ref{thm:main3} for $n=1$]
\label{thm:main3intro}
The space $\ccO_{g,1}[d]$ is homeomorphic to the
connected component $\Hur^{\Delta}(\fS_d\geo)(\kld{d}_g)$
of the space $\Hur^{\Delta}(\fS_d\geo)$. 
\end{athm}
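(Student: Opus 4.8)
The plan is to establish the homeomorphism through the classical dictionary between degree-$d$ branched covers of the plane and their monodromy representations, and then to verify that this dictionary carries the analytic structure of $\ccO_{g,1}[d]$ onto the combinatorial structure of the simplicial Hurwitz space. First I would recall that a class $[\cS,f,\mathfrak{u}]\in\ccO_{g,1}[d]$ with finite branch locus $P=\set{z_1,\dots,z_k}\subset\C$ determines, via the trivialisation $\mathfrak{u}$ below $P$ and the Riemann existence theorem, a well-defined tuple of local monodromies $(\sigma_1,\dots,\sigma_k)\in\fS_d^k$, read off along a standard system of loops based at $*_P$; conversely such a tuple reconstructs the cover up to the equivalence defining $\ccO_{g,1}[d]$. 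The positions $z_i\in\C$ together with the labels $\sigma_i\in\fS_d$ are exactly the data recorded by a point of $\Hur^{\Delta}(\fS_d\geo)$, so the assignment $[\cS,f,\mathfrak{u}]\mapsto\big((z_1,\sigma_1),\dots,(z_k,\sigma_k)\big)$ is the candidate homeomorphism.

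Next I would pin down the component. The condition that the total monodromy equal $\lc_d$ translates directly into the total-monodromy label of the Hurwitz configuration, and the Riemann--Hurwitz formula converts the genus constraint into a numerical one: writing the norm $N(\sigma)$ as $d$ minus the number of cycles of $\sigma$, the single fully ramified point over $\infty$ contributes $d-1$ and the finite branch points contribute $\sum_i N(\sigma_i)$, so the total space $\bar\cS$ has genus $g$ precisely when $\sum_i N(\sigma_i)=d-1+2g$. This is exactly the degree grading that, together with the total monodromy $\lc_d$, indexes the component $\kld{d}_g$; hence the candidate map lands in $\Hur^{\Delta}(\fS_d\geo)(\kld{d}_g)$ and, by Riemann existence, hits every configuration there, giving a bijection.

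The conceptual heart of the argument is that the partial product of $\fS_d\geo$ is the correct bookkeeping for how branch points may collide within $\ccO_{g,1}[d]$. When two branch points carrying labels $\sigma,\tau$ merge, their monodromies multiply, and the resulting configuration stays at genus $g$ only if no ramification is lost, i.e. only if $N(\sigma\tau)=N(\sigma)+N(\tau)$ --- which is exactly the geodesic condition under which the product in $\fS_d\geo$ is defined. This is why the full symmetric group would correspond to $\cHarm_{g,1}[d]$ while the geodesic PMQ singles out the closed subspace $\ccO_{g,1}[d]$, and it is what matches collisions of branch points with the face maps of the simplicial structure on $\Hur^{\Delta}(\fS_d\geo)$. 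Finally I would upgrade the continuous bijection to a homeomorphism by comparing local models: a neighbourhood of a degenerate configuration in $\ccO_{g,1}[d]$, where several branch points are close together, is modelled on the same simplicial gluing data that governs the corresponding face of $\Hur^{\Delta}(\fS_d\geo)$. I expect this last step --- identifying the two topologies along the collision strata, and in particular showing that the inverse map is continuous there rather than merely set-theoretically defined --- to be the main obstacle, since it requires a careful analysis of how the analytic topology on families of branched covers degenerates as branch points coalesce, and a verification that this degeneration is governed precisely by the geodesic partial product.
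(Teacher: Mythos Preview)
Your approach is essentially the same as the paper's: construct inverse bijections $\check\cv$ (branch values with monodromy) and $\bc$ (reconstruct the branched cover), identify the component via Riemann--Hurwitz, and then prove continuity of both maps separately. Your identification of the geodesic condition as governing which collisions are permitted is exactly right, and you correctly anticipate that continuity at the collision strata is the main technical obstacle.

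One concrete ingredient you should be aware of: the paper does not prove continuity of the inverse map $\bc$ by a direct simplicial comparison. Instead, near a configuration $(P,\psi)$ it cuts the surface $\cS$ along the preimage of an annular neighbourhood of each branch value, obtaining a fixed Riemann surface with boundary (carrying no variation) together with a family of discs, one for each cycle $c_{i,j}$ of each local monodromy $\sigma_i$. The disc filling a $d_{i,j}$-fold boundary curve, together with its branched cover map, is parametrised by a point of $\NMonPol_{d_{i,j}}(\mcR)$, the space of normalised monic polynomials of degree $d_{i,j}$ with critical values in $\mcR$; and the homeomorphism $\NMonPol_{d'}(\mcR)\cong\Hur(\mcR;(\fS_{d'}\geo)_+)_{\widehat{\lc_{d'}}}$ was established separately (Proposition~\ref{prop:tildecv}) as part of the proof of the Poincar\'e property. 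So the local model you expect to write down is not a generic simplicial one but rather this very concrete polynomial model, and the general case is reduced to it by the sewing decomposition. Your remark that ``the full symmetric group would correspond to $\cHarm_{g,1}[d]$'' is not quite accurate---$\cHarm$ parametrises harmonic data rather than a Hurwitz space with group monodromies---but it does not affect the argument.
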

Here $\kld{d}_g$ is a certain element of the completion $\widehat{\fS_d\geo}$ of the PMQ $\fS_d\geo$. Recall that for a generic augmented PMQ $\Q$ we have a completion $\hat\Q$, and the connected components of $\Hur^{\Delta}(\Q)$ are classified by the set $\hat\Q$, along the $\hat\Q$-valued total monodromy \cite[Theorem 6.14]{Bianchi:Hur1}. In our case, we are considering the connected component of $\Hur^{\Delta}(\fS_d\geo)$
of configurations whose $\widehat{\fS_d\geo}$-valued total monodromy is equal to $\kld{d}_g$.
The element $\kld{d}_g$ can for instance be expressed as the product
\[
\kld{d}_g=\widehat{(1,2)}\widehat{(1,2)}\dots\widehat{(1,2)}\widehat{(2,3)}\widehat{(3,4)}\dots\cdot\widehat{(d-1,d)}\in\widehat{\fS_d\geo},
\]
where at the beginning the factor $\widehat{(1,2)}$ is repeated in total $2g+1$ times. The element
$\kld{d}_g$ can also be expressed in the notation of \cite[Proposition 7.13]{Bianchi:Hur1} as
\[
\kld{d}_g=\pa{\lc_d\,;\,\set{1,\dots,d}\,;\,d-1+2g}.
\]
We remark that the single element $\kld{d}_g\in\widehat{\fS_d\geo}$ contains three pieces of information:
\begin{itemize}
 \item the classical $\fS_d$-valued total monodromy: in our case, the permutation $\lc_d$;
 \item a partition of the set $\set{1,\dots,d}$ into subsets: in our case, the trivial partition with a single subset,
 corresponding to the requirement that total spaces of branched covers be connected surfaces;
 \item the number of simple branch values seen in the generic case, split as a sum of numbers $\ge0$ corresponding to the pieces of the partition: in our case, the single number $d-1+2g$.
\end{itemize}

We can use the machinery of Hurwitz spaces to identify the stable homology of moduli spaces with that of an explicit double loop space: the following is the main result of the article.
\begin{athm}[Theorem \ref{thm:main4}]
 \label{thm:main4intro}
The stable homology groups $\mathrm{colim}_{g\to\infty}H_*(\fM_{g,1})$ are isomorphic to $H_*(\Omega_0^2\bB_\infty)$,
where $\bB_\infty$ is the homotopy colimit of a certain sequence of \emph{Hurwitz-Ran} spaces, and $\Omega^2_0$ is a component of the double loop space of $\bB_\infty$.
\end{athm}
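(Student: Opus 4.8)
The plan is to feed the geometric identifications of Theorems~\ref{thm:main1intro} and~\ref{thm:main3intro} into a group-completion machine for the $E_2$-algebra of Hurwitz spaces. For $d\ge 2g$ the composite
\[
\fM_{g,1}\;\simeq\;\ccO_{g,1}[d]\;\cong\;\Hur^{\Delta}(\fS_d\geo)(\kld{d}_g)
\]
identifies each moduli space with a single path component of a simplicial Hurwitz space, indexed by the total monodromy $\kld{d}_g=(\lc_d;\set{1,\dots,d};d-1+2g)$. Since the left-hand side is independent of $d$, the homology $H_*(\Hur^{\Delta}(\fS_d\geo)(\kld{d}_g))$ is automatically constant in $d$ as long as $d\ge 2g$; this freedom is what will let me push $d$ to infinity in tandem with $g$.

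First I would realise the genus-increasing map $\fM_{g,1}\to\fM_{g+1,1}$ on the Hurwitz side as a stabilisation map, namely juxtaposition with a fixed configuration of two branch points carrying monodromies $(1,2),(1,2)$ placed in a right half-plane; this is exactly right multiplication by the element $\widehat{(1,2)}\widehat{(1,2)}$ that sends $\kld{d}_g$ to $\kld{d}_{g+1}$. Geometrically this adds a handle to the branched cover, and it should agree, under the equivalence above, with the standard handle-attaching map of moduli spaces. The degree-increasing map, adding a sheet and right-multiplying by $\widehat{(d,d+1)}$, extends the long cycle $\lc_d$ to $\lc_{d+1}$; by the previous paragraph it is a homology isomorphism whenever both source and target stay in the range $d\ge 2g$.

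Next I would exploit the $E_2$-algebra structure on the disjoint union $\bigsqcup\Hur^{\Delta}(\fS_d\geo)$ over all total monodromies, which arises because branch points live in the plane $\C\cong\R^2$ and can be stacked in two independent directions. By a scanning argument in the spirit of \cite{Bianchi:Hur2,Bianchi:Hur3}, the Hurwitz-Ran space over the plane is a two-fold delooping of this $E_2$-algebra; assembling these Hurwitz-Ran spaces into a homotopy colimit as $d\to\infty$ produces $\bB_\infty$. The McDuff-Segal group-completion theorem then identifies the homology of the group completion with the localisation of $H_*(\bigsqcup\Hur^{\Delta}(\fS_d\geo))$ at the stabilising elements, while the $E_2$-recognition principle identifies that group completion with $\Omega^2\bB_\infty$. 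Restricting to the path component picked out by the monodromies $\kld{d}_g$ gives $\Omega_0^2\bB_\infty$, and comparing the stabilisation along the $\widehat{(1,2)}\widehat{(1,2)}$ direction with the genus map yields $\colim_{g\to\infty}H_*(\fM_{g,1})\cong H_*(\Omega_0^2\bB_\infty)$.

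The hard part will be twofold. On one hand I must verify the hypotheses of the group-completion theorem in the PMQ setting: that the monoid $\pi_0$ of total monodromies is sufficiently well-behaved for localisation to compute the homology of the group completion, and that the Hurwitz-Ran space of the earlier papers is genuinely the two-fold delooping demanded by scanning. On the other hand I must match the abstract stabilisation used by the group-completion theorem, which multiplies by a fixed PMQ element, with the concrete geometric genus stabilisation, and simultaneously absorb the auxiliary degree parameter $d\to\infty$ so that the selected component $\Omega_0^2\bB_\infty$ has homology exactly $\colim_{g\to\infty}H_*(\fM_{g,1})$ and not some larger colimit. I expect this reconciliation of the two stabilisation directions, under the constraint $d\ge 2g$, to be the crux; the $E_2$-formalism and the recognition principle should be comparatively formal once the Hurwitz-Ran spaces are in hand.
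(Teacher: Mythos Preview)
Your overall architecture is right, but there is a genuine gap in the $E_2$-step. The disjoint union $\bigsqcup\Hur^{\Delta}(\fS_d\geo)$ over all total monodromies is \emph{not} an $E_2$-algebra for $d\ge3$: vertical stacking of configurations conjugates the monodromies, so the operation is well-defined on path components only up to conjugation, and $\pi_0\cong\widehat{\fS_d\geo}$ is not abelian. The paper makes exactly this point at the start of Section~\ref{sec:MTSO}, and the $E_2$-structure used there exists only on the sub-$E_1$-algebra of configurations whose $\tilde\fS_d$-valued total monodromy is of the form $(h,\one)$. That subalgebra is used for Theorem~\ref{thm:main5intro}, not for Theorem~\ref{thm:main4intro}.

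For Theorem~\ref{thm:main4intro} the paper works entirely with the $E_1$ (topological monoid) structure $\mHurm((\fS_d\geo)_+)$ and the group-completion theorem for monoids. The delooping identification $\Omega B\mHurm((\fS_d\geo)_+)\simeq\Omega^2\Hur_+(\cR,\del\cR;\fS_d\geo,\fS_d)_{\one}$ comes from \cite[Theorem 4.19]{Bianchi:Hur3}, not from an $E_2$-recognition principle. The crux you correctly anticipate---matching the genus stabilisation with the abstract localisation---is resolved by a specific trick you are missing: the element $\bar m=(1,\fc_{(1,2)})^2$ that implements genus stabilisation is \emph{not} a propagator of the monoid (its powers stay in the sub-PMQ $\fS_2\geo\subset\fS_d\geo$), so the group-completion theorem does not apply directly to $\Tel(\mHurm_+,a;\bar m)$. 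The paper introduces a genuine propagator $e'=(1,\fc_{(1,2)})^2(1,\fc_{(2,3)})^2\cdots(1,\fc_{(d-1,d)})^2$ and then proves (Lemma~\ref{lem:braiding}) that on the specific components $\mHurm_{+,\kld{d}_g}$ left multiplication by $e'$ is homotopic to left multiplication by $e=\bar m^{d-1}$, using that conjugation by the total monodromy $\lc_d$ cycles the transpositions $(j,j+1)$. This braiding argument is the missing idea; without it your localisation computes the wrong thing.
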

We summarise here the argument, providing also the definition of $\bB_\infty$. We replace each of the spaces $\ccO_{g,1}[d]\cong\Hur^{\Delta}(\fS_d\geo)_{\kld{d}_g}$ with the homotopy equivalent space
$\mHurm(\fS_d\geo)_{\kld{d}_g}$; here, following \cite[Definition 2.4]{Bianchi:Hur3}, we consider the space $\mHurm(\fS_d\geo)$, which is a strict topological monoid counterpart of the simplicial Hurwitz space $\Hur^\Delta(\fS_d\geo)$, obtained through the theory of Hurwitz-Ran spaces from \cite{Bianchi:Hur2}.
We consider then a double-indexed diagram of spaces $\mHurm(\fS_d\geo)_{\kld{d}_g}$, for $d\ge2 $ and $g\ge0$, with stabilising maps increasing either $g$ or $d$ by 1. We identify the following colimits of homology groups
\[
 \mathrm{colim}_{g\to\infty} H_*(\fM_{g,1})\cong\mathrm{colim}_{g,d\to\infty}H_*\pa{\mHurm(\fS_d\geo)_{\kld{d}_g}}
\]
using Theorem \ref{thm:main1intro} and a diagonal argument. We then use the group-completion theorem \cite{McDuffSegal, FM94}
together with \cite[Theorem 4.19]{Bianchi:Hur3}, and we rewrite the second colimit as
\[
\begin{split}
 \mathrm{colim}_{g,d\to\infty}H_*\pa{\mHurm(\fS_d\geo)_{\kld{d}_g}}&=\mathrm{colim}_{d\to\infty}
 \pa{\mathrm{colim}_{g\to\infty}H_*\pa{\mHurm(\fS_d\geo)_{\kld{d}_g}}}\\
 &\cong\mathrm{colim}_{d\to\infty} H_*\pa{\Omega_0^2\Hur_+(\cR,\del\cR;\fS_d\geo,\fS_d)_{\one}}\\
 &\cong H_*(\Omega^2_0\bB_\infty).
\end{split}
\]
Here $\Hur_+(\cR,\del\cR;\fS_d\geo,\fS_d)_{\one}$ is an instance of a \emph{relative} Hurwitz-Ran space, see Section \ref{sec:mumford}
for a brief explanation, and \cite{Bianchi:Hur2} for the precise construction of Hurwitz-Ran spaces.
The space $\bB_\infty$ is defined as the homotopy colimit
\[
\bB_\infty:=\mathrm{hocolim}_{d\to\infty}\Hur_+(\cR,\del\cR;\fS_d\geo,\fS_d)_{\one},
\]
and the last isomorphism follows immediately.

We exploit Theorem \ref{thm:main4intro} to make a further computation, passing to rational coefficients and to cohomology. We use \cite[Theorem 6.1]{Bianchi:Hur3} (which is applicable in our situation thanks to Theorem \ref{thm:main3intro}) and a standard argument in rational homotopy theory to
compute the cohomology ring
\[
 H^*(\Omega^2_0\bB_\infty;\bQ)\cong\lim_{d\to\infty} H^*(\Omega_0^2\Hur_+(\cR,\del\cR;\fS_d\geo,\fS_d)_{\one} ;\bQ)\cong\bQ[x_1,x_2,\dots],
\]
where $x_i$ has degree $2i$.
For the first isomorphism we check that the Mittag-Leffler condition is satisfied, so that no $\lim^1$ terms occur.
Thus $\lim_{g\to\infty} H^*(\fM_{g,1};\bQ)$ is isomorphic
to the polynomial ring $\bQ[x_1,x_2,\dots]$, which is a formulation of the Mumford conjecture;
see Corollary \ref{cor:mumford}.

We conclude the article with the following theorem, which compares our proof of the Mumford conjecture with the original one, due to Madsen and Weiss \cite{MadsenWeiss}, and relying on the spectrum $\MTSO(2)$.
\begin{athm}[Theorem \ref{thm:main5}]
 \label{thm:main5intro}
The space $\bB_\infty$ is homotopy equivalent to the infinite loop space $\Omega^{\infty-2}\MTSO(2)$.
where $\MTSO(2)$ is the spectrum used by Madsen and Weiss in \cite{MadsenWeiss}.
\end{athm}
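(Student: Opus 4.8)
The plan is to identify $\bB_\infty$ with $\Omega^{\infty-2}\MTSO(2)$ by exhibiting both as the zeroth space of equivalent spectra and then connecting them by an explicit comparison map of scanning type. First I would verify that $\bB_\infty$ is an infinite loop space. The system of Hurwitz--Ran spaces $\Hur_+(\cR,\del\cR;\fS_d\geo,\fS_d)_\one$ carries a symmetric monoidal structure coming from disjoint union of branch configurations, together with two commuting stabilisations: one in the direction of the rectangle $\cR$, which provides the $E_2$-structure responsible for the double loop space in Theorem \ref{thm:main4intro}, and one in the direction of $d$, adding sheets to the covers. After group completion, which is built into the homotopy colimit defining $\bB_\infty$, these structures should assemble $\bB_\infty$ into the underlying space of a connective spectrum $\mathbf{B}$, so that $\bB_\infty\simeq\Omega^\infty\mathbf{B}$. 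It then suffices to produce an equivalence of spectra $\mathbf{B}\simeq\Sigma^2\MTSO(2)$.

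Next I would construct a comparison map $\bB_\infty\to\Omega^{\infty-2}\MTSO(2)$ by a parametrised Pontryagin--Thom, or \emph{scanning}, construction. A point of $\Hur_+(\cR,\del\cR;\fS_d\geo,\fS_d)_\one$ is the germ of a branched cover whose completed total space $\bar\cS$ is an oriented surface mapping to the plane; recording, via the Gauss map, the tangent $2$-planes of $\bar\cS$ along this map yields, through the cobordism-category description of $\MTSO(2)$ underlying the work of Madsen and Weiss \cite{MadsenWeiss}, a point of $\Omega^{\infty-2}\MTSO(2)$. This construction is natural in both $d$ and $g$ and compatible with the monoidal structures, so it descends to the colimit and defines an infinite loop map. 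Equivalently, I would invoke a scanning equivalence for Hurwitz--Ran spaces in the spirit of \cite{Bianchi:Hur2,Bianchi:Hur3} to identify the group completion with a section space of a bundle whose fibre encodes the local branch data near a simple branch value — a transposition in $\fS_d\geo$ — and then analyse the limiting fibre as $d\to\infty$.

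To prove the map is a weak equivalence I would argue one component at a time. Applying $\Omega^2$ and restricting to base-point components, the map is compatible with the chain of identifications $H_*(\Omega^2_0\bB_\infty)\cong\colim_{g}H_*(\fM_{g,1})\cong H_*(\Omega^\infty_0\MTSO(2))$, the first isomorphism being Theorem \ref{thm:main4intro} and the second the theorem of Madsen and Weiss \cite{MadsenWeiss}; hence it induces an isomorphism on the homology of base-point components. As both sides are infinite loop spaces, in particular simple nilpotent H-spaces, an H-map inducing a homology isomorphism on the base-point component is a homotopy equivalence there. Finally I would check the map is an isomorphism on $\pi_0$: the target has $\pi_0\Omega^{\infty-2}\MTSO(2)=\pi_{-2}\MTSO(2)\cong\Z$, while the source has the matching $\Z$ recorded by the number of branch points modulo the stabilisations. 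Since the map is a map of grouplike infinite loop spaces, translation identifies every component with the base-point one, so an equivalence on one component together with a $\pi_0$-isomorphism yields $\bB_\infty\simeq\Omega^{\infty-2}\MTSO(2)$.

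The main obstacle is the construction and analysis of the comparison map in the two middle steps, namely verifying that scanning of Hurwitz--Ran data produces \emph{precisely} the Thom spectrum $\MTSO(2)$ of the virtual bundle $-L\to BSO(2)$, and not merely some spectrum with the same rational cohomology. Concretely, one must match the local model of a simple branched cover near a transposition with the Thom-space data of $-L\to BSO(2)$, and one must ensure that the abstract homology isomorphism furnished by Theorem \ref{thm:main4intro} is realised by a genuine infinite loop map, so that it can be upgraded, through the recognition principle above, to an equivalence of spectra rather than a bare isomorphism of homotopy groups.
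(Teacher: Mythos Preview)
Your proposal has the right overall shape---build a comparison map, show it is a homology equivalence via Theorem~\ref{thm:main4intro} and Madsen--Weiss, upgrade to a weak equivalence---but there are genuine gaps in the execution, and the paper takes a rather different route that sidesteps precisely the obstacles you flag.

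First, a factual error: both $\bB_\infty$ and $\Omega^{\infty-2}\MTSO(2)$ are simply connected, so your $\pi_0\cong\Z$ claim is wrong and the component-matching step is unnecessary. The paper uses simple connectivity immediately to reduce the problem to constructing a \emph{homology} equivalence.

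More seriously, you assert an infinite loop structure on $\bB_\infty$ from the outset, but the structures you list do not produce one: the Hurwitz--Ran spaces carry a natural $E_2$-structure (configurations in the plane), and the stabilisation in $d$ is just a sequential colimit, not an additional delooping. The paper works entirely at the $E_2$ level. It introduces the sub-$E_2$-algebra $\Hur(\mcR;(\fS_d\geo)_+)_{(\bullet,\one)}$ of configurations with trivial $\fS_d$-part of the total monodromy, and shows via a mapping-telescope comparison that $B^2$ of this $E_2$-algebra, after letting $d\to\infty$, is weakly equivalent to $\bB_\infty$.

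The key difference is in the comparison map. You propose a direct Pontryagin--Thom/scanning map to $\MTSO(2)$, and you correctly identify this as the main obstacle: matching the local branch model to the Thom data of $-L\to BSO(2)$ is essentially re-proving Madsen--Weiss. The paper avoids this entirely by introducing a second family of $E_2$-algebras $\scM_d$, the moduli spaces of (possibly disconnected) Riemann surfaces with $d$ parametrised boundary components. Madsen--Weiss applies as a black box to give $B^2\scM_d\simeq\Omega^{\infty-2}\MTSO(2)$ for each $d$. The comparison map is then the very concrete branched-cover map $\bc_d\colon\Hur(\mcR;(\fS_d\geo)_+)_{(\bullet,\one)}\to\scM_d$, sending a configuration to the preimage of $\cR$ in the associated branched cover; this is an $E_2$-map and a $\pi_0$-bijection. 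After applying $\Omega^2B^2$ and letting $d\to\infty$, the induced map is identified with the zig-zag through $\fM_{\infty,1}^+$ already established in Section~\ref{sec:mumford}, hence a weak equivalence; delooping twice finishes the argument. In short, the paper replaces your hypothetical scanning map by an explicit $E_2$-map into a model to which Madsen--Weiss applies directly.
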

Thus  $\bB_\infty$ provides a ``Hurwitz model'' for the infinite loop space of the double suspension of $\MTSO(2)$.

\subsection{Motivation}
This is the fourth and final article in a series about Hurwitz spaces. We apply the machinery of simplicial Hurwitz spaces $\Hur^{\Delta}(\fS_d\geo)$ with monodromies in the PMQ $\fS_d\geo$ to give a new combinatorial model of moduli spaces $\fM_{g,n}$ of Riemann surfaces. The model is handy enough to allow a new proof of the Mumford conjecture.

Our hope is to exploit further the simplicial Hurwitz spaces $\Hur^{\Delta}(\fS_d\geo)$ to obtain information about the \emph{unstable} homology of moduli spaces. As discussed in \cite[Section 6]{Bianchi:Hur1}, leveraging on the fact that $\fS_d\geo$ is a Poincar\'e PMQ, one can in principle compute the homology of any component of $\Hur^{\Delta}(\fS_d\geo)$ by using an explicit, finite chain complex of \emph{arrays}, which is in spirit similar to the Fadell-Neuwirth-Fuchs chain complex computing the homology of classical configuration spaces. We hope that some new, interesting homological information about moduli spaces of Riemann surfaces can be extracted in the future from this chain model.

\subsection{Acknowledgments}
This series of articles is a generalisation and a further development of my PhD thesis \cite{Bianchi:PhD}. I would like to dedicate this article to my PhD supervisor Carl-Friedrich B\"odigheimer for his advice and encouragement during and also after my PhD, and for introducing me to the world of slit configurations.

I am also grateful to Felix Boes for several explanations about slit configurations, and to Jesper Grodal, Ib Madsen and Nathalie Wahl for the suggestion of deriving Theorem \ref{thm:main5intro} from the argument of Theorem \ref{thm:main4intro}. Finally I would also like to thank Nathalie Wahl and the anonymous referee for some helpful comments on a first draft of the article.

\tableofcontents
\section{Preliminaries on moduli spaces}
\label{sec:slit}
\subsection{Moduli spaces of surfaces with directed marked points}
\label{sec:intro}
We fix $g\geq 0$ and $n\geq 1$ throughout the section; this includes the special case
$g=0$ and $n=1$, which will be slightly different and for which a few extra remarks will be needed.

We fix a closed, smooth, connected, orientable surface $\Sigma_g$ of genus $g$.
We fix $n$ distinct points $Q_1,\dots, Q_n\in\Sigma_g$, called \emph{marked points}, 
and for each $Q_i$ we fix a non-zero tangent vector $X_i\in T_{Q_i}\Sigma_g$ (see Figure \ref{fig:moduli_1}). We denote by $\Sigma_{g,n}$ the surface $\Sigma_g$ equipped with these $n$ directed marked points.
We abbreviate $\uQ=(Q_1,\dots,Q_n)$ and $\uX=(X_1,\dots,X_n)$. The open surface
$\sgn\setminus\set{Q_1,\dots,Q_n}$ is usually denoted by $\sgn\setminus\uQ$.
A sequence $\ud=(d_1,\dots,d_n)$ of numbers $d_i\geq 1$ is fixed throughout the section, and we denote by $d$ the sum $\sum_{i=1}^nd_i$. Finally, we set $h:=2g+n+d-2$ throughout the section.
\begin{figure}[ht]
 \centering
\begin{overpic}[scale=0.7]{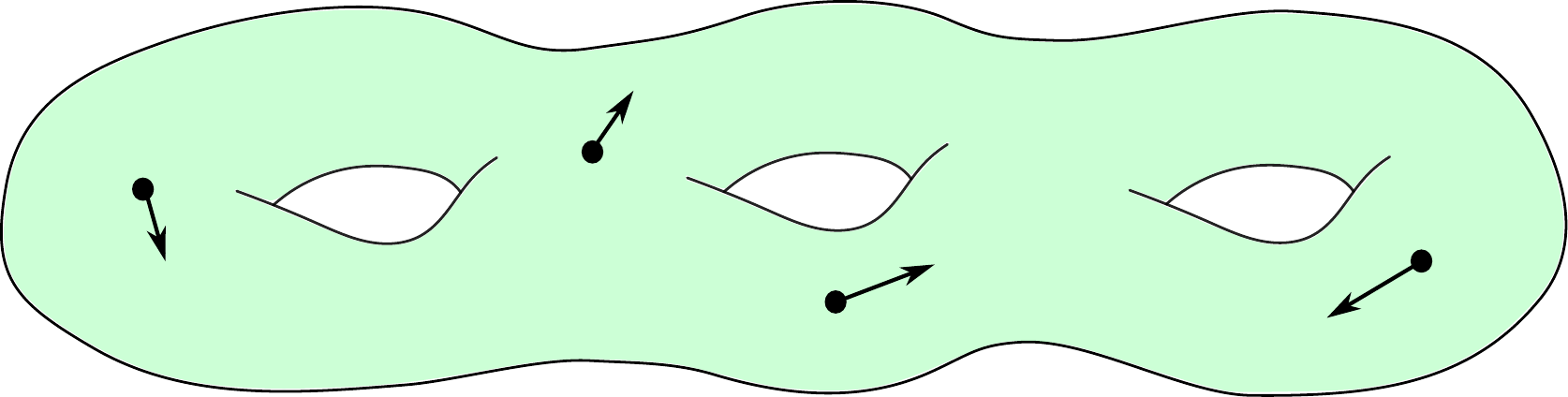}
\put(7,15){$Q_1$}
\put(11,7){$X_1$}
\put(39,20){$X_2$}
\put(34,13){$Q_2$}
\put(48,6){$Q_3$}
\put(58,9){$X_3$}
\put(92,7){$Q_4$}
\put(84,2){$X_4$}
\end{overpic}
 \caption{A surface of genus 3 with 4 marked points.}
 \label{fig:moduli_1}
\end{figure}
\begin{defn}
 \label{defn:mcg}
We denote by $\Diff_{g,n}$ the topological group of diffeomorphisms of $\Sigma_g$ that preserve the orientation and fix all points $Q_i$ and all vectors $X_i$.
The mapping class group $\Gamma_{g,n}$ is the group $\pi_0(\Diff_{g,n})$ of isotopy classes of such diffeomorphisms. 
\end{defn}

\begin{defn}
 \label{defn:fM}
The moduli space $\fM_{g,n}$ contains equivalence classes of Riemann structures $\fr$ on $\Sigma_{g,n}$: two Riemann structures $\fr$ and $\fr'$
on $\Sigma_{g,n}$ are \emph{equivalent} if there exists a diffeomorphism $\psi\in\Diff_{g,n}$ which is a biholomorphism from $(\sgn,\fr)$ to $(\sgn,\fr')$.

More formally, we consider the action of the topological group $\Diff_{g,n}$ on the space
$\Riem(\sgn)$ of Riemann structures on $\sgn$;
the moduli space $\fM_{g,n}$ is the orbit space $\Riem(\sgn)/\Diff_{g,n}$. 
\end{defn}
In a two-step process, one usually first defines the Teichm\"uller space $\Teich_{g,n}=\mathfrak{Riem}(\sgn)/\Diff^{\sim\Id}_{g,n}$ as the quotient by the subgroup of $\Diff_{g,n}$ of diffeomorphisms isotopic to the identity.

For $g>0$ or $n>1$, the space $\Teich_{g,n}$ is homeomorphic to $\R^{6g-6+4n}$, and one then defines $\fM_{g,n}$ as the quotient of $\Teich_{g,n}$ by the free, properly discontinuous and orientation-preserving action of $\Gamma_{g,n}$; thus $\fM_{g,n}$ is a classifying space for the group $\Gamma_{g,n}$ and it is an orientable manifold of real dimension $6g-6+4n$.

The case $g=0$ and $n=1$ is exceptional: $\Teich_{0,1}$ is a point, $\Gamma_{0,1}$ is the trivial group, and $\fM_{0,1}=\Teich_{0,1}/\Gamma_{0,1}$ is also just a point. The reader acquainted with the theory of topological stacks may prefer to consider $\fM_{0,1}$ as the classifying stack of the topological group $(\C,+,0)$, and treat here (and in the following) all cases $g\ge0$ and $n\ge1$ at the same time.

\begin{nota}
 \label{nota:modulus}
We usually denote by $\fm=[\fr]\in\fM_{g,n}$ a \emph{modulus} on $\sgn$, i.e. the equivalence class of a Riemann structure $\fr$ on $\sgn$.
\end{nota}

\begin{defn}
 \label{defn:normalchart}
A \emph{normal chart} at $(Q_i,X_i)$ for a Riemann structure $\fr$ on $\sgn$
is a holomorphic chart $w_i\colon U_i\to\C$ defined on a neighbourhood $Q_i\in U_i\subset\sgn$ with the following properties:
\begin{itemize}
 \item $w_i\colon Q_i\mapsto 0\in\C$;
 \item $Dw_i\colon X_i\mapsto \partial/\partial x$, where $x=\Re(z)$ and $y=\Im(z)$ are the standard real coordinates on $\C\simeq\R^2$, and $\partial/\partial x$ is the dual vector of $dx$ (informally, it is the horizontal unit vector pointing to right).
\end{itemize}
\end{defn}
Note that the composition of a normal chart with the inverse of another normal chart
is a holomorphic function $\fw=\fw(z)$ defined on some neighbourhood of $0\in\C$;
its Taylor expansion has the form $\fw(z)=z+\mathrm{h.o.t.},$
where the higher order terms correspond to powers of $z$ with exponent higher than $1$. In particular
the differential of a normal chart $Dw_i\colon T_{Q_i}\sgn\to T_0\C$ does not depend on the normal chart.
Note also that $X_i$ can be recovered from a normal chart $w_i$ as $Dw_i^{-1}(\partial/\partial x)$.

\begin{nota}
 \label{nota:CP1}
Our preferred model of surface $\Sigma_{0,1}$ is $\C P^1$:
the unique marked point $Q$ is the point at infinity $\infty=[1:0]$;
the vector $X\in T_{\infty} \C P^1$ is the unique vector such that the
chart $[a:b]\mapsto b/a\in\C$, defined on a neighbourhood of $\infty$,
is normal. The difference $\C P^1\setminus \set{Q}$ is identified with $\C$ through the usual map $[a:b]\mapsto a/b$. The standard Riemann structure on $\C P^1$ is denoted $\frst$.
\end{nota}
The action of $\Diff_{0,1}$ on $\mathfrak{Riem}(\Sigma_{0,1})$ is transitive but not free. The isotropy group of $\frst$ is the group of biholomorphisms $f\colon \C P^1\to\C P^1$ that fix $\infty$ and induce the identity on $T_{\infty}\C P^1$: this group is isomorphic to the additive group $\C$, where $\lambda\in\C$ corresponds to the translation $z\mapsto z+\lambda$ on $\C$, extended by $\infty\mapsto\infty$.

In the case $g> 0$ or $n> 1$, instead, the action of $\Diff_{g,n}$ on 
$\mathfrak{Riem}(\Sigma_{g,n})$ is free. See \cite{EarleSchatz} and \cite{Gardiner} for these classical results in Teichm\"uller theory.

In order to study the homotopy type of $\fM_{g,n}$
it is convenient to have a combinatorial model for this space;
by \emph{combinatorial model}
we mean in great generality a pair of finite CW-complexes $(\cX,\cX')$ whose definition (cells and
attaching maps) is combinatorial in flavour, and such that $\cX\setminus\cX'$ is homotopy equivalent to $\fM_{g,n}$.

In Subsection \ref{subsec:harm} we briefly recall the model $\Harmud$ of slit configurations, and in Section \ref{sec:ccO} we will describe the Hurwitz model $\ccOud$.

\subsection{The theorem of Riemann-Roch}
\label{subsec:RiemannRoch}
\begin{defn}
\label{defn:stddivisor} 
A divisor $\mathcal{D}$ on $\sgn$ is an element of the free abelian group generated by points of $\sgn$.
The degree of $\mathcal{D}=\sum_{i=1}^k\lambda_iP_i$ is $\deg(\cD)=\sum_{i=1}^k\lambda_i$.

We say that $\mathcal{D}$ is $\uQ$-supported if it is of the form $\sum_{i=1}^n\lambda_i Q_i$, with coefficients $\lambda_i\geq 1$.
We denote by $D=D(\ud)=\sum_{i=1}^n d_i Q_i$
the $\uQ$-supported divisor of $\sgn$ associated with the sequence $\ud$; note that
$\deg(D)=d=\sum_{i=1}^nd_i$. Similarly, we denote by $D+\uQ$ the $\uQ$-supported divisor $\sum_{i=1}^n (d_i+1)Q_i$.
\end{defn}
Let $\fr$ be a Riemann structure on $\sgn$, let $\mathcal{D}=\sum_{i=1}^k\lambda_iP_i$ be a divisor and consider the complex vector space
$\cO(\fr,\mathcal{D})$ containing all holomorphic sections of the holomorphic line bundle associated with $\mathcal{D}$. Concretely, an element of $\cO(\fr,\mathcal{D})$
is a meromorphic function $f\colon(\Sigma_g,\fr)\to\C P^1$ such that
$f$ is holomorphic away from $\set{P_1,\dots,P_k}$ and,
for all $1\leq i\leq k$,
$f$ has a pole of order at most $\lambda_i$ at $P_i$; this means that if $\lambda_i<0$, then $f$ must have a zero of
order at least $-\lambda_i$ at $P_i$.
The complex dimension of $\cO(\fr,\mathcal{D})$ depends in general on $\fr$ and $\mathcal{D}$.

Recall that we can associate with every Riemann surface $(\sgn,\fr)$
a \emph{canonical divisor} $K(\fr)$ of degree $2g-2$, whose corresponding holomorphic line bundle is isomorphic to the holomorphic cotangent bundle $T^*\sgn$. The divisor $K(\fr)$ is uniquely determined up to principal divisors.
\begin{thm}[Riemann-Roch]
\label{thm:RiemannRoch}
Let $\fr$ be a Riemann structure on $\sgn$, let $\mathcal{D}$ be any divisor on $\sgn$, and let $K(\fr)$ be a canonical divisor associated with the Riemann surface $(\sgn,\fr)$.
Then
\[
 \dim_{\C}\cO(\fr,\mathcal{D})-\dim_{\C}\cO(\fr,K(\fr)-\mathcal{D})=\deg(\cD)-g+1.
\]
\end{thm}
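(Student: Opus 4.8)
The plan is to deduce Riemann--Roch from two ingredients: the additivity of the holomorphic Euler characteristic under adding a point to a divisor, and Serre duality. Write $L(\cD)$ for the holomorphic line bundle on $(\sgn,\fr)$ associated with $\cD$, and let $\cO_\cD$ denote its sheaf of holomorphic sections, so that $\cO(\fr,\cD)=H^0(\sgn,\cO_\cD)$ by definition. Since $\sgn$ is a compact complex curve, the sheaf cohomology $H^i(\sgn,\cO_\cD)$ is finite-dimensional and vanishes for $i\ge 2$; I set $\chi(\cD):=\dim_\C H^0(\sgn,\cO_\cD)-\dim_\C H^1(\sgn,\cO_\cD)$.

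First I would establish the \emph{Euler-characteristic} form of the theorem, namely $\chi(\cD)=\deg(\cD)-g+1$, by induction built from adding and removing single points, reaching any divisor from the trivial one. For a point $P\in\sgn$, comparing $\cD$ with $\cD+P$ yields a short exact sequence of sheaves
\[
0\to\cO_\cD\to\cO_{\cD+P}\to\C_P\to 0,
\]
where $\C_P$ is the skyscraper sheaf with stalk $\C$ at $P$ recording the leading Laurent coefficient. Since Euler characteristics are additive along short exact sequences and $\chi(\C_P)=1$, the associated long exact cohomology sequence gives $\chi(\cD+P)=\chi(\cD)+1$, and the same sequence run backwards handles removing a point. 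Starting from the trivial divisor and using $\chi(0)=\dim_\C H^0(\sgn,\cO)-\dim_\C H^1(\sgn,\cO)=1-g$ --- the constants give $\dim_\C H^0=1$, and $\dim_\C H^1(\sgn,\cO)=g$ is the standard Hodge-theoretic computation of the genus --- I obtain $\chi(\cD)=\deg(\cD)+1-g$ for every divisor.

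The remaining step is to identify the ``defect'' $\dim_\C H^1(\sgn,\cO_\cD)$ with $\dim_\C\cO(\fr,K(\fr)-\cD)$, which is exactly Serre duality: there is a natural perfect pairing
\[
H^1(\sgn,\cO_\cD)\otimes H^0(\sgn,\Omega^1\otimes\cO_{-\cD})\to\C,
\]
given by cup product followed by the residue map $H^1(\sgn,\Omega^1)\cong\C$. Since $\Omega^1\otimes\cO_{-\cD}\cong\cO_{K(\fr)-\cD}$ by the very definition of the canonical divisor $K(\fr)$, this gives $\dim_\C H^1(\sgn,\cO_\cD)=\dim_\C\cO(\fr,K(\fr)-\cD)$. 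Substituting into $\chi(\cD)=\deg(\cD)+1-g$ yields the stated formula.

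The main obstacle is Serre duality itself; everything else is bookkeeping with the long exact sequence. To prove the duality I would pass to Dolbeault cohomology, representing $H^1(\sgn,\cO_\cD)$ by $L(\cD)$-valued $(0,1)$-forms modulo $\bar\partial$-exact ones, and invoke Hodge theory on the compact surface $(\sgn,\fr)$ to produce the residue pairing and verify its non-degeneracy. This is where compactness and the choice of Riemann structure $\fr$ genuinely enter, and where the analysis (finite-dimensionality, the Hodge decomposition, non-degeneracy of the pairing) is concentrated. Alternatively, one may treat Serre duality as a black box imported from the classical theory, in which case the proof reduces to the two elementary steps above.
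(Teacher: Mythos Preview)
The paper does not prove this statement: Theorem~\ref{thm:RiemannRoch} is simply stated as the classical Riemann--Roch theorem and then invoked as a tool (in particular in Lemma~\ref{lem:ccOnonempty}). There is no proof to compare against.

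Your sketch is the standard modern argument and is correct in outline. The two-step strategy --- reduce to the Euler-characteristic identity $\chi(\cD)=\deg(\cD)-g+1$ via the skyscraper short exact sequence, then invoke Serre duality to convert $H^1(\sgn,\cO_\cD)$ into $H^0(\sgn,\cO_{K(\fr)-\cD})$ --- is exactly how the theorem is usually established in a sheaf-theoretic framework. You are honest about where the real work lies (Serre duality), and your remark that it can be treated as a black box is appropriate here, since the paper itself treats the entire Riemann--Roch theorem as a black box.
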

Note that the inequality $\dim_{\C}\cO(\fr,D)\geq \deg(\cD)-g+1$ always holds;
if moreover $\deg(\cD)\geq 2g-1$, then
$K(\fr)-\deg(\cD)$ has degree $\leq -1$, so that $\cO(\fr,K(\fr)-D)=0$ and we obtain the
equality $ \dim_{\C}\cO(\fr,\cD)= \deg(\cD)-g+1$.

\subsection{The parallel slit complex}
\label{subsec:harm}
The model $\Harmud$ is constructed by considering Riemann surfaces $(\sgn,\fr)$ equipped with a harmonic
function $u\colon\sgn\setminus\uQ\to\R$ having prescribed behaviour near the marked points.
One can cut the surface along the critical lines of the flow associated
with $-\nabla u$, the negative gradient of $u$; the cut surface consists of $d$ contractible components, each of which can be biholomorphically embedded in $\C$
as the complement of a finite collection of slits. A \emph{slit} is a horizontal halfline in $\C$ running to the left,
i.e. of the form $\set{z_0-t\,|\, t\geq 0}$ for some $z_0\in\C$ (see Figure \ref{fig:moduli_2}).

\begin{figure}[ht]
 \centering
\begin{overpic}[scale=0.7]{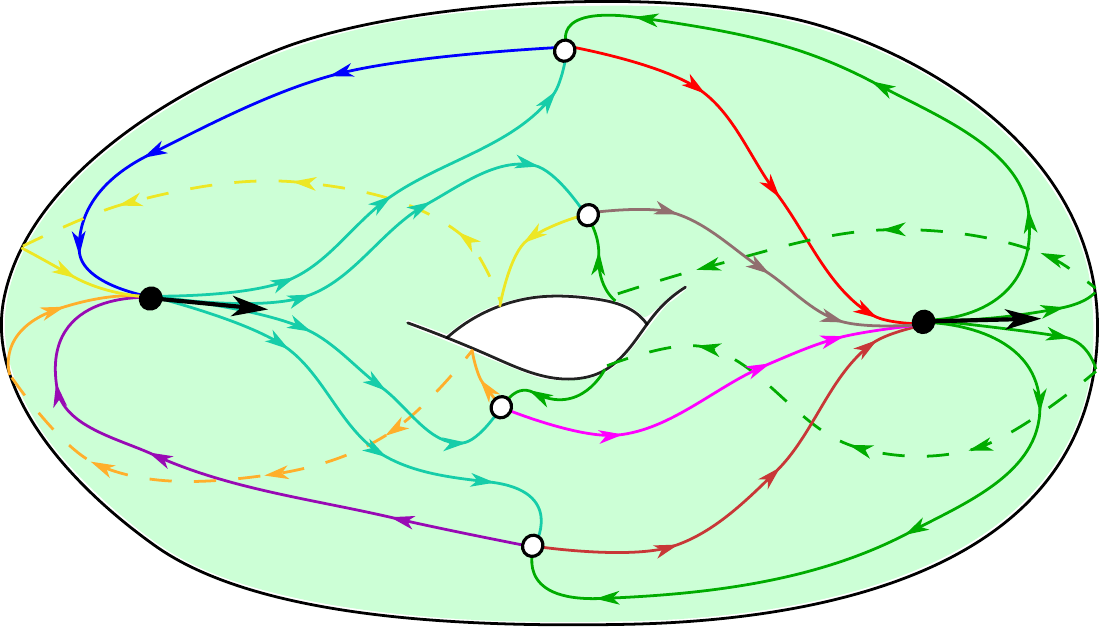}
\put(11,25){$Q_1$}
\put(18,31){$X_1$}
\put(45,15){$\crit_3$}
\put(52,48){$\crit_1$}
\put(50,8){$\crit_4$}
\put(53,39){$\crit_2$}
\put(87,30){$X_2$}
\put(80,23){$Q_2$}
\end{overpic}
\hspace{0.3cm}
\begin{overpic}[scale=0.7]{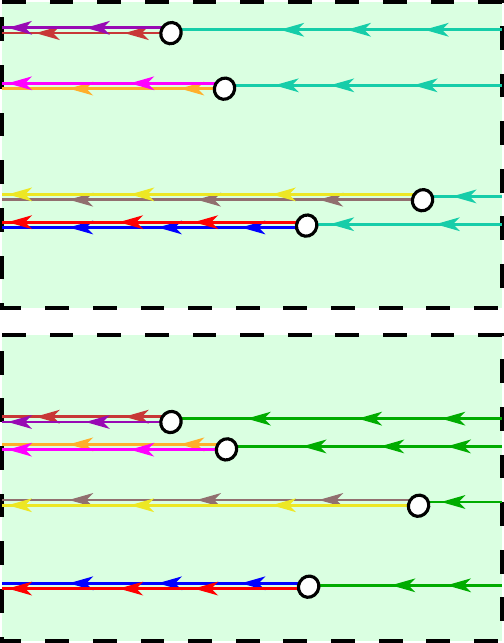}
\put(58,14){$\crit_2$}
\put(37,25){$\crit_3$}
\put(49,2){$\crit_1$}
\put(28,38){$\crit_4$}
\put(58,73){$\crit_2$}
\put(37,79){$\crit_3$}
\put(49,58){$\crit_1$}
\put(28,92){$\crit_4$}
\end{overpic}
 \caption{On left, the Riemann surface $(\Sigma_{1,2},\fr)$, together with a harmonic function
 $u\colon\Sigma_{1,2}\setminus\set{Q_1,Q_2}\to\R$ having directed poles of order 1 at $Q_1$ and $Q_2$: the critical points of $u$ are
 $\crit_1,\crit_2,\crit_3,\crit_4$, and the flow lines of $-\nabla u$ entering
 or exiting from the critical points are shown.
 On right, the associated slit picture. Here $d=2$ and $d_1=d_2=1$.}
 \label{fig:moduli_2}
\end{figure}

The process of dissecting a Riemann surface as described above, in order to represent its parts as tame open subsets of the plane, is called \emph{Hilbert uniformisation} \cite{Hilbert}. B\"{o}digheimer \cite{Boedigheimer90} considers all moduli of Riemann surfaces at the same time and describes the model $\Harm_{g,1}[1]$ for the moduli space $\fM_{g,1}$;
the same technique works for surfaces with more than one marked point and with higher values for the numbers $d_i$. The formal definition of $\Harmud$ requires some preparation. 

\begin{defn}
 \label{defn:udharm}
Let $\fr$ be a Riemann structure on $\sgn$. A harmonic function $u\colon\sgn\setminus\uQ\to\R$ is \emph{$\ud$-directed} if for all $1\leq i\leq n$ it has a \emph{directed} pole of order $d_i$ at $Q_i$ in the following sense: in a normal chart $w_i\colon U_i\to\C$ around $Q_i$ we can write
  \[
   u(w_i)=\Re\pa{\frac{1}{w_i^{d_i}}+\mathrm{h.o.t.}}-B_i\log\abs{w_i}
  \]
for some real number $B_i$. The higher order terms correspond to powers of $w_i$ with exponent higher than $-d_i$. The word ``directed'' refers to the fact that the coefficient of $w_i^{-d_i}$ is required to be $1$, and not another number in $\C^*$.
\end{defn}
\begin{defn}
\label{defn:critgraph}
The critical graph of a harmonic function $u\colon(\sgn,\fr)\setminus\uQ\to\R$, denoted $\cK_0(u)\subset\sgn$, is defined by considering the flow lines of $-\nabla u$, the negative gradient of $u$ on $\sgn\setminus\uQ$. The vector field $-\nabla u$, and hence the parametrisation of the flow lines, depend on a choice of Riemannian metric in the conformal class $\fr$; but the decomposition of $\sgn\setminus\uQ$ into maximal flow lines is independent of this choice.
The graph $\cK_0$ is the union of all critical points of $u$ (points where $du=0$), all flow lines having a critical point of $u$ as negative limit, and all marked points $Q_i$. See Figure \ref{fig:moduli_5}.
\end{defn}
 \begin{figure}[ht]
 \centering
\begin{overpic}[scale=0.7]{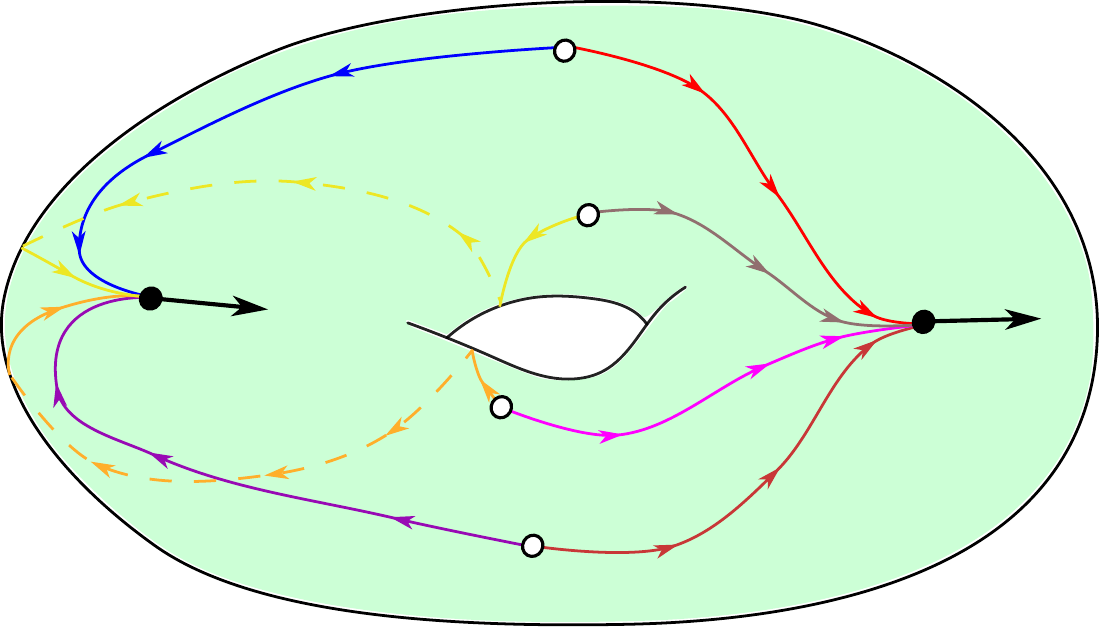}
\put(11,25){$Q_1$}
\put(18,31){$X_1$}
\put(45,15){$\crit_3$}
\put(52,48){$\crit_1$}
\put(50,8){$\crit_4$}
\put(53,39){$\crit_2$}
\put(87,30){$X_2$}
\put(80,23){$Q_2$}
\end{overpic}
\hspace{0.3cm}
\begin{overpic}[scale=0.7]{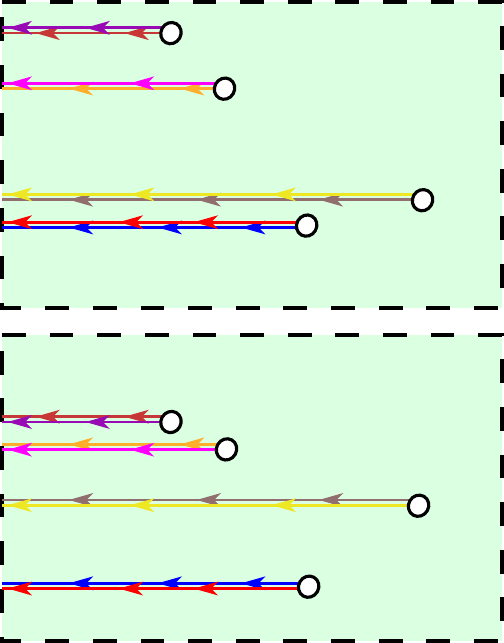}
\put(58,14){$\crit_2$}
\put(37,25){$\crit_3$}
\put(49,2){$\crit_1$}
\put(28,38){$\crit_4$}
\put(58,73){$\crit_2$}
\put(37,79){$\crit_3$}
\put(49,58){$\crit_1$}
\put(28,92){$\crit_4$}
\end{overpic}
 \caption{On left, the critical graph $\cK_0\subset\Sigma_{1,2}$ in the example of Figure \ref{fig:moduli_2}.
 It is an oriented graph with $6$ vertices and $8$ edges embedded in the surface $\Sigma_{1,2}$.
 The complement $\Sigma_{1,2}\setminus \cK_0$ is endowed with a holomorphic embedding into $\C_1\sqcup \C_2$;
 the image of this embedding is the complement of the infinite halflines, called \emph{slits}, shown on right.}
 \label{fig:moduli_5}
 \end{figure}
For a $\ud$-directed harmonic function $u\colon(\sgn,\fr)\setminus\uQ\to \R$, the complement of the critical graph $\sgn\setminus\cK_0(u)$ consists of $d$ contractible, open regions. Let
$L_{i,j}\colon[0,\epsilon)\to\sgn$, for $1\le i\le n$ and $0\le j\le d_i-1$, be a short, smooth path exiting from $Q_i$ with velocity 
$e^{2\pi j\sqrt{-1}/d_i}X_i$, where we use the Riemann structure $\fr$ to rotate $X_i$ by an angle $2\pi j/d_i$. Each arc $L_{i,j}$ is contained, for small positive times, in a single region of $\sgn\setminus\cK_0(u)$; viceversa, all $d$ regions of $\sgn\setminus\cK_0(u)$
contain the germ of precisely one arc $L_{i,j}$. See Figure \ref{fig:moduli_3}.
\begin{figure}[ht]
\centering
\begin{overpic}[scale=0.7]{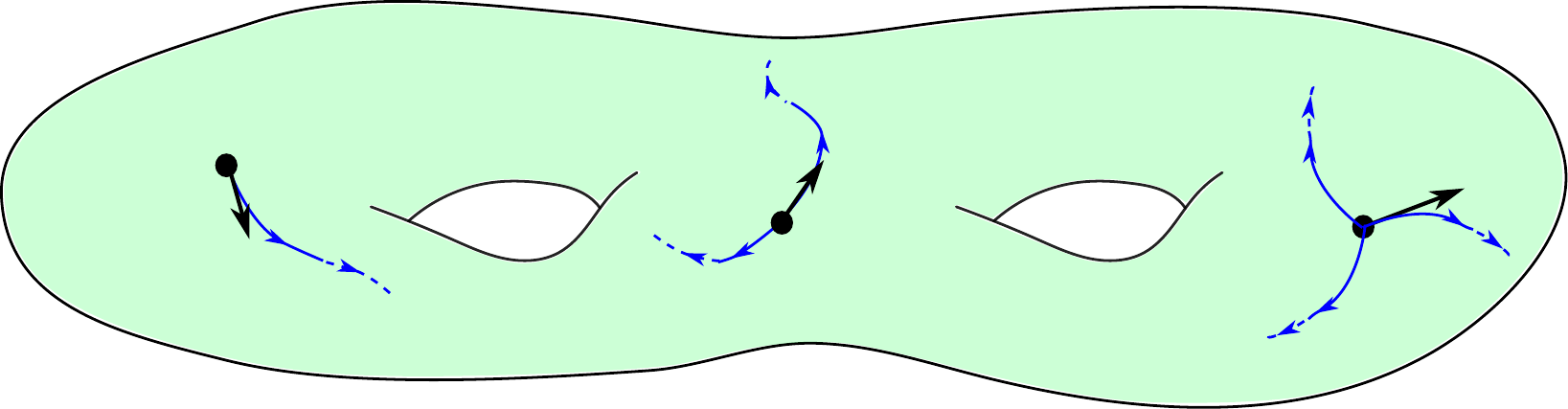}
\put(10,15){$Q_1$}
\put(12,10){$X_1$}
\put(18,7){$L_{1,0}$}
\put(50,9){$Q_2$}
\put(52,13){$X_2$}
\put(43,20){$L_{2,0}$}
\put(41,7){$L_{2,1}$}
\put(82,10){$Q_3$}
\put(92,15){$X_3$}
\put(90,9){$L_{3,0}$}
\put(84,19){$L_{3,1}$}
\put(83,3){$L_{3,2}$}
\end{overpic}
 \caption{An example of choices of the germs $L_{i,j}$ for a Riemann surface $(\Sigma_{2,3},\fr)$ and $\ud=(1,2,3)$.}
 \label{fig:moduli_3}
\end{figure}
\begin{defn}
 \label{defn:Harmudfr}
 Let $\fr$ be a Riemann structure on $\sgn$. The space $\Harm(\fr,\ud)$ contains couples $(u,v)$, where $u\colon(\sgn,\fr)\setminus\uQ\to\R$ is a $\ud$-directed harmonic function, and $v\colon\sgn\setminus\cK_0(u)$ is a conjugate harmonic function to $u$ defined on the complement of the critical graph of $u$, i.e. $u+\sqrt{-1}v$ is a holomorphic function on $\sgn\setminus\cK_0(u)$.
\end{defn}
We note that $\Harm(\fr,\ud)$ has the structure of a real affine space. If $(u,v)$ and $(u',v')$ are in $\Harm(\fr,\ud)$ and $t\in\R$, then $u'':=tu+(1-t)u'$ is also a $\ud$-directed harmonic function on $\sgn$; the complement
$\sgn\setminus\cK_0(u'')$ consists of $d$ connected components, each containing the germ of one of the paths $L_{i,j}$; the function $tv+(1-t)v'$ is well-defined on the image of $L_{i,j}$, at least for small positive times, and can be extended uniquely to a conjugate harmonic function to $u''$ on the component of $\sgn\setminus\cK_0(u'')$ containing $L_{i,j}$; these $d$ extensions give a harmonic function $v''\colon(\sgn,\fr)\setminus\cK_0(u'')$, and we may define the convex combination $t(u,v)+(1-t)(u',v')$ to be $(u'',v'')$.

To prove that $\Harm(\fr,\ud)$ is non-empty, and to compute its real dimension,
we first give a definition.
\begin{defn}
 \label{defn:uddf}
 Let $df$ be a meromorphic differential on $\sgn$. We say that $df$ is $\ud$-directed if $df$ is holomorphic away from $\uQ$, and if for all $1\le i\le n$ the Laurent expansion of $df$ in a normal chart $w_i$ around $Q_i$ has the form 
 \[
 df(w_i)=\frac{-d_i}{w_i^{d_i+1}}dw_i+\mathrm{h.o.t.}.
 \]
\end{defn}
Note the different exponents in comparison with Definition \ref{defn:udharm}.
We then argue as follows: see \cite{Boedigheimer90} for more details.
\begin{itemize}
\item Given $(u,v)\in\Harm(\fr,\ud)$, the complex differential $df:=du+\sqrt{-1}dv$ can be continuously extended to a $\ud$-directed meromorphic differential on $\sgn$;
moreover, for all loops $\gamma$ in $\sgn\setminus\uQ$ we have $\Re(\int_\gamma df)=0$: we call this the \emph{$\Re\!\int\!0$-condition}.
\item Conversely, given a $\ud$-directed meromorphic differential $df$ on $\sgn$
satisfying the $\Re\!\int\!0$-condition, we can choose
 an integral $u=\int \Re(df)\colon\sgn\setminus\uQ\to \R$; $u$ is a $\ud$-directed harmonic function, uniquely determined up to one real integration constant;
 on $\sgn\setminus\cK_0(u)$ we can choose an integral $v=\int\Im(df)$, which is uniquely determined up to $d$ real integration constants.
\item The space of \emph{all} $\ud$-directed meromorphic differentials on $(\sgn,\fr)$ is an affine translation of $\cO(\fr,K(\fr)+D)$ inside $\cO(\fr,K(\fr)+D+\uQ)$; since $\deg(K(\fr)+D)\ge 2g-1$, the complex dimension of $\cO(\fr,K(\fr)+D)$ is $g-1+d$.
\item The space of $\ud$-directed meromorphic differentials $df$ on $(\sgn,\fr)$ 
satisfying the $\Re\!\int\!0$-condition is non-empty by an application of the Dirichlet principle; it has real dimension $2g-2+2d-(2g+n-1)=2d-n-1$
because it is obtained from the affine space of all $\ud$-directed meromorphic differentials by imposing the vanishing of $2g+n-1$ real linear functionals.
\item Summing the real dimension of the space of $\ud$-directed meromorphic differentials $df$ satisfying the $\Re\!\int\!0$-condition with the number $d+1$ of integration constants, we obtain that the real dimension of $\Harm(\fr,\ud)$ is
$3d-n=3h-(6g+4n-6)$. This is equal to $3h-\dim\fM_{g,n}$ unless $g=0$ and $n=1$,
in which case we get $3h+2$.
\end{itemize}

The spaces $\Harm(\fr,\ud)$, for varying $\fr\in\mathfrak{Riem}(\sgn)$, assemble into a space $\tHarmud$; the forgetful map 
$\tilde\forg_{\Harm}\colon\tHarmud\to\mathfrak{Riem}(\sgn)$, sending $(\fr,u,v)$ to $\fr$, is a bundle map.
\begin{defn}
 \label{defn:Harmud}
 The group $\Diff_{g,n}$ acts freely on $\tHarmud$ by pulling back both Riemann structures and harmonic functions. The orbit space is denoted
 \[
 \Harmud:=\tHarmud/\Diff_{g,n};
 \]
 the forgetful map $\tilde\forg_{\Harm}$ gives rise to a bundle map $\forg_{\Harm}\colon\Harmud\to\fM_{g,n}$.
\end{defn}
The space $\Harmud$ is an orientable manifold of dimension $3h$, and the map $\forg_{\Harm}$ is a bundle map. If $g>0$ or $n>1$, the fibre $\forg_{\Harm}^{-1}(\fm)$ is canonically identified with $\Harm(\fr,\ud)$, for any $\fm=[\fr]\in\fM_{g,n}$. In the case $g=0$ and $n=1$, both $\forg_{\Harm}$ and $\tilde\forg_{\Harm}$ have only one fibre, $\tHarm(\ud)$ is itself a real affine space of dimension $3h+2$, and
$\Harmud$ is the quotient of $\tHarm(\ud)$ by a free, affine action of $\C$ by translations. In all cases $\forg_{\Harm}$ has contractible fibres and is therefore a homotopy equivalence.

Given an class $[\fr,u,v]\in\tHarmud$, we can consider the holomorphic function $f=u+\sqrt{-1}v\colon(\sgn\setminus\cK_0(u),\fr)\to\C$.
The map $f$ restricts to an open embedding on each of the $d$ connected components of $\sgn\setminus\cK_0(u)$, with image given by the complement of a finite configuration of horizontal, left-oriented halflines, called \emph{slits}.

The graph $\cK_0(u)$ has the following decorations:
\begin{itemize}
 \item it is a \emph{fat graph}, i.e. at each vertex there is a cyclic order of the incident edges; the associated surface, obtained by fattening edges to strips, has genus $g$ and $d$ boundary components (it is indeed a small neighbourhood of $\cK_0(u)$ in $\sgn$);
 \item all edges are oriented (by $-\nabla u$);
 \item $n$ of the vertices are labelled $Q_1,\dots, Q_n$; let $\crit_1,\dots,\crit_k$ be the other vertices, for some $k\ge0$;
 \item all edges incident to $Q_i$ are incoming; there is a partition of the
 edges incident to $Q_i$ into $d_i$ parts; each part has an internal order, and the set of parts is also totally ordered; the cyclic order at $Q_i$ is induced by the lexicographic order given by this partition (use the vectors $e^{2\pi j\sqrt{-1}/d_i}X_i$ to split the parts of this partition);
 \item no two edges incident to $\crit_i$ can be consecutive in the cyclic order and both incoming;
 \item the vertices $\crit_1,\dots,\crit_k$ are endowed with a preorder, i.e.
 an equivalence relation together with a total order on the set of equivalence classes; $\crit_i$ strictly preceeds $\crit_j$ in the preorder as soon as there is an edge oriented from $\crit_j$ to $\crit_i$ (evaluate the function $u$ to obtain the preorder).
\end{itemize}

If we fix a combinatorial type for $\cK_0$ as decorated graph, the subspace of $\Harmud$ of classes $[\fr,u,v]$ realising this combinatorial type is homeomorphic to an open multisimplex, whose coordinates correspond to the horizontal and vertical positions of the slits in the associated slit configurations. The one-point compactification of $\Harmud$ admits a cell decomposition with cells in bijection with combinatorial types for $\cK_0$ (together with the 0-cell given by the point at infinity).
The cellular structure of $\Harmud\cpt$ is described in detail in \cite{BH} and \cite{Bianchi:PhD}.

\section{The moduli space of branched covers of \texorpdfstring{$\C P^1$}{CP1}}
\label{sec:ccO}
We fix $g\geq 0$, $n\geq 1$ and a sequence $\ud=(d_1,\dots,d_n)$ as in Section \ref{sec:slit};
we moreover assume $d\ge2$, as in this section the case $d=1$ would be of little interest.
We still denote $d=\sum_{i=1}^nd_i$ and $h=2g+n+d-2$.

In this section we introduce $\ccOud$, the moduli space of Riemann surfaces $(\sgn,\fr)$ endowed with a $\ud$-directed meromorphic function. In fact $\ccOud$ can be considered as a closed subspace of $\Harmud$, and the restricted map
\[
\forg_{\ccO}=\forg_{\Harm}|_{\ccOud}\colon\ccOud\to\fM_{g,n}
\]
is a homotopy equivalence whenever $d\ge 2g+n-1$. It will however be convenient to embed $\ccOud$ directly
into a certain \emph{quotient} $\cHarmud$ of $\Harmud$: roughly speaking, if elements of $\Harmud$ carry the information of $d$ integration constants for the harmonic form $dv$, elements of $\cHarmud$ only carry the information of a single integration constant.

\subsection{Definition of \texorpdfstring{$\ccOud$}{Ogn[ud]} as a subspace of \texorpdfstring{$\cHarmud$}{cHgn[ud]}}
\begin{defn}
 \label{defn:cHarmr}
Recall Definition \ref{defn:udharm} and \ref{defn:Harmudfr}. For a Riemann structure $\fr$ on $\sgn$ we denote by $\cHarm(\fr,\ud)$ the space of triples $(u,dv,v_{1,0})$, where $u\colon(\sgn,\fr)\setminus\uQ\to\R$ is a $\ud$-directed harmonic function, $dv$ is the conjugate harmonic 1-form to $du$ on $\sgn\setminus\uQ$ (i.e. $dv$ is obtained from $du$ by applying the Hodge star operator), and $v_{1,0}$ is an integral of $dv$ on the component
of $\sgn\setminus\cK_0(u)$ containing the germ of a path $L_{1,0}$ exiting from $Q_1$ with velocity $X_1$, compare with Figure \ref{fig:moduli_3}.
\end{defn}
Note that $dv$ is completely determined by $u$ (or by $v_{1,0}$); the main reason to put $dv$ among the data is to have a ready notation for the harmonic conjugate of $du$.
The space $\cHarm(\fr,\ud)$ is an affine quotient of $\Harm(\fr,\ud)$, by considering the surjective map of affine spaces sending $(u,v)$ to $(u,dv,v_{1,0})$, where $v_{1,0}$ is the restriction of $v$ to the component
of $\sgn\setminus\cK_0(u)$ containing the germ of $L_{1,0}$. The real dimension of $\cHarm(\fr,\ud)$ is
$3d-n-(d-1)=2d-n+1$.

\begin{defn}
 \label{defn:cHarmud}
 For $g>0$ or $n>0$ we define $\cHarmud$ as the real affine bundle of dimension $2d-n+1$ over $\fM_{g,n}$
 whose fibre over $[\fr]=\fm\in\fM_{g,n}$ is $\cHarm(\fr,\ud)$. Formally, we first consider the affine bundle over $\Riem(\Sigma_{g,n})$ with fibre $\cHarm(\fr,\ud)$ over $\fr$, and then we take the quotient of the total space of this affine bundle by the group $\Diff_{g,n}$, which acts by pulling back simultaneously Riemann structures, harmonic functions and harmonic forms.
 We denote by $\check\forg_{\Harm}\colon\cHarmud\to\fM_{g,n}$ the associated bundle map.
\end{defn}
For $g=0$ and $n=0$ we will not need to consider a space $\cHarmud$. The following definition should be compared with Definitions \ref{defn:udharm} and \ref{defn:uddf}.
\begin{defn}
\label{defn:ccOr}
Let $\fr$ be a Riemann structure on $\sgn$.
A meromorphic function $f\colon(\sgn,\fr)\to \C P^1$ is \emph{$\ud$-directed} if it is holomorphic on $\sgn\setminus\uQ$ and if
for all $1\leq i\leq n$ and for any normal chart $w_i$ around $Q_i$, the Laurent expansion of $f$ has the form
\[
 f(w_i)=\frac{1}{w_i^{d_i}}+\mathrm{h.o.t.},
\]
where the higher order terms correspond powers of $w_i$ with exponent higher than $-d_i$:
we say that $f$ has a \emph{directed pole} of order $d_i$ at $Q_i$.

We let $\ccO(\fr,\ud)$ be the space of $\ud$-directed meromorphic functions on $(\sgn,\fr)$.
\end{defn}
Using normal charts, one can note that if $f,g\in\ccO(\fr,\ud)$ and $\lambda\in\C$, then
also $\lambda f+(1-\lambda)g\in\ccO(\fr,\ud)$; therefore $\ccO(\fr,\ud)$ is a \emph{complex affine} subspace
of $\cO(\fr,D)$ (see Definition \ref{defn:stddivisor}).

Given $f\in\ccO(\fr,\ud)$, we can define a harmonic
function $u=\Re(f)$ on $(\sgn,\fr)\setminus\uQ$, which admits a conjugate harmonic function $v=\Im(f)$ on the entire subspace $\sgn\setminus\uQ$, and in particular on the component of $\sgn\setminus\cK_0(u)$ containing the germ of a path $L_{1,0}$ as in Definition \ref{defn:cHarmr}: thus we determine a triple $(u,dv,v_{1,0})\in\cHarm(\fr,\ud)$.
We obtain an inclusion of real affine spaces $\ccO(\fr,\ud)\subset\cHarm(\fr,\ud)$;
the image of this inclusion consists of all triples $(u,dv,v_{1,0})\in\Harm(\fr,\ud)$ such that $v_{1,0}$ can be continuously extended over $\sgn\setminus\uQ$ to an integral of $dv$.
\begin{defn}
 \label{defn:ccOud}
We define $\ccOud$ as the moduli space of Riemann surfaces $(\sgn,\fr)$ endowed with a $\ud$-directed meromorphic function
$f\in\ccO(\fr,\ud)$. Two couples $(\fr,f)$ and $(\fr',f')$ are equivalent if there is a diffeomorphism $\psi\in\Diff_{g,n}$
pulling back $\fr'$ to $\fr$ and $f'$ to $f$.

Formally, we first consider the space $\tilde\ccO_{g,n}[\ud]$ of couples $(\fr,f)$,
with $\fr\in\Riem(\sgn)$ and $f\colon(\sgn,\fr)\to\C P^1$ a $\ud$-directed meromorphic function; the group
$\Diff_{g,n}$ acts freely on this space by pulling back Riemann structures and meromorphic functions, and
we define $\ccOud$ as the quotient $\tilde\ccO_{g,n}[\ud]/\Diff_{g,n}$.
We denote by $\forg_{\ccO}\colon\ccOud\to\fM_{g,n}$ the forgetful map $\forg_{\ccO}\colon [\fr,f]\mapsto[\fr]$.
\end{defn}
We used the same notation ``$\forg$'' in Definitions \ref{defn:Harmud}, \ref{defn:cHarmud} and \ref{defn:ccOud}: in fact, for $g>0$ or $n>1$, the space
$\ccOud$ can be embedded into $\cHarmud$ as the closed subspace of classes of quadruples $[\fr,u,dv,v_{1,0}]$ such that $v_{1,0}$ can be continuously extended to an integral $v\colon(\sgn,\fr)\setminus\uQ\to\R$ of $dv$.
Under this embedding, the map $\forg_{\ccO}$ from Definition \ref{defn:ccOud} is the restriction of the map
$\check\forg_{\Harm}$ from Definition \ref{defn:Harmud}. In the following we reformulate the characterisation of $\ccOud$ as subspace of $\cHarmud$.

\begin{defn}
 \label{defn:cFgn}
For $g>0$ or $n>1$ we denote by $\pr\colon\scF_{g,n}\to\fM_{g,n}$ the tautological \emph{closed} surface bundle over $\fM_{g,n}$, with fibre $\sgn$. Formally, $\scF_{g,n}=(\sgn\times\Riem(\sgn))/\Diff_{g,n}$. Removing marked points fibrewise yields a tautological \emph{open} surface bundle $\mpr\colon\mscF_{g,n}\to\fM_{g,n}$, with fibre $\sgn\setminus\uQ$.

We can then compute fibrewise the first real cohomology and obtain a vector bundle
$H^1(\mpr)\colon H^1(\mscF_{g,n})\to\fM_{g,n}$ with fibre $H^1(\sgn\setminus\uQ)\cong\R^{2g+n-1}$.
We define a map of real affine bundles over $\fM_{g,n}$
\[
\int\colon\cHarmud\to H^1(\mscF_{g,n})
\]
by sending the class $[\fr,u,dv,v_{1,0}]$ to the cohomology class on $\mpr^{-1}([\fr])\cong(\sgn,\fr)\setminus\uQ$ given by integrating the closed real 1-form $dv$ along 1-cycles.
\end{defn}
After Definition \ref{defn:cFgn}, for $g>0$ or $n>1$ we can then characterise $\ccOud\subset\cHarmud$ as the kernel of the map $\int$, i.e. the preimage of the zero section of $H^1(\mscF_{g,n})$. Comparing with the discussion after Definition \ref{defn:uddf}, we may say that for $f=(u,v)\in\ccO(\fr,\ud)$ the $\ud$-directed meromorphic form $df=du+\sqrt{-1}dv$ not only satisfies the ``$\Re\!\int\!0$-condition'', but also the analogous ``$\Im\!\int\!0$-condition''.

If $g> 0$ or $n> 1$, for all $\fm=[\fr]\in\fM_{g,n}$
there is a natural identification between $\forg^{-1}_{\ccO}(\fm)$ and $\ccO(\fr,\ud)$.
In the case $g=0$ and $n=1$, the space $\ccO(\frst,d)$
coincides with the space $\mathfrak{MonPol}_d$ of monic polynomials $z^d+a_{d-1}z^{d-1}+\dots+a_0$ of degree $d$ with complex coefficients. The space
$\ccO_{0,1}[d]$ is then the quotient of $\ccO(\frst,d)$ by the free action of $\C$ given by precomposing polynomials with translations of the complex plane: the element $\lambda\in \C$ acts on the polynomial $z^d+a_{d-1}z^{d-1}+\dots+a_0$ by sending it to the polynomial $(z+\lambda)^d+a_{d-1}(z+\lambda)^{d-1}+\dots+a_0$, which is again a monic polynomial of degree $d$.
\begin{defn}
 \label{defn:NMonPol}
 We denote by $\mathfrak{NMonPol}_d$ the space of \emph{normalised} monic polynomials of degree $d$, i.e. the quotient
 of $\mathfrak{MonPol}_d$ by the free action of $\C$ described above.
The projection $\mathfrak{MonPol}_d\to\mathfrak{NMonPol}_d$ is a surjective map,
admitting a section given by taking polynomials of the form $z^d+a_{d-2}z^{d-2}+\dots+a_0$, i.e. with vanishing coefficient of $z^{d-1}$. We will therefore regard $\mathfrak{NMonPol}_d$ also as an affine subspace of $\mathfrak{MonPol}_d$.
\end{defn}
In general the projection $\mathfrak{MonPol}_d\to\mathfrak{NMonPol}_d$ is not a map of affine spaces.
 
\subsection{Moduli spaces as classifying spaces}
\label{subsec:classifyingspaces}
Let $g>0$ or $n>1$. The tautological fibre bundle $\pr\colon\scF_{g,n}\to\fM_{g,n}$ is a fibre bundle with fibres Riemann surfaces of type $\Sigma_{g,n}$: that is, each fibre of $\pr$ is a Riemann surface of genus $g$ endowed with $n$ ordered and directed marked points.

In fact, $\pr$ is \emph{universal} among fibre bundles with this property, in the following strong sense. Let $\cX$ be a paracompact space and let $\pr_{\cY}\colon\cY\to\cX$ be a fibre bundle over $\cX$ with fibres smooth surfaces of genus $g$ endowed with the following additional structure:
\begin{itemize}
 \item $n$ disjoint, continuous sections $Q_1,\dots,Q_n\colon \cX\to\cY$ of $\pr_{\cY}$ are chosen, yielding $n$ marked points on each fibre;
 \item for each $1\le i\le n$, a nowhere-vanishing section $X_i\colon Q_i(\cX)\to VT(\cY)$ is chosen, where $VT(\cY)$ denotes the vertical tangent bundle of $\cY$, making the $n$ marked points of each fibre into directed marked points;
 \item each fibre of $\pr_{\cY}$ is endowed with a Riemann structure; Riemann structures change continuously
 in $\Riem_{g,n}$, i.e. for any fibre-wise smooth trivialisation $\pr_{\cY}^{-1}(U)\cong U\times\Sigma_{g,n}$ the natural map of sets $U\to\Riem_{g,n}$ is continuous.
\end{itemize}
Then there is a unique, continuous map $\kappa\colon\cX\to\fM_{g,n}$ such that $\cY$ is the pull back of $\scF_{g,n}$ along $\kappa$.

Consider now the projection $\check\forg\colon\cHarmud\to\fM_{g,n}$, and let $\check\pr_{\Harm}\colon\check\scF_{g,n}^{\Harm}\to\cHarmud$ be the pull back of the tautological surface bundle.
Similarly, let $\pr_{\ccO}\colon\scF^{\ccO}_{g,n}\to\ccOud$ be the restriction of $\check\pr_{\Harm}$
over $\ccOud\subset\cHarmud$.

Then $\check\pr_{\Harm}$ is universal among fibre bundles $\pr_{\cY}\colon\cY\to\cX$ over paracompact spaces with fibres Riemann surfaces of type $\Sigma_{g,n}$ and endowed with the following:
\begin{itemize}
 \item there is a continuous function $u\colon\cY\setminus(Q_1(\cX)\sqcup\dots\sqcup Q_n(\cX))\to\R$ restricting to a $\ud$-directed harmonic function on each fibre;
 \item there is a continuous function $v\colon \cY_{1,0}\to\R$, where $\cY_{1,0}\to\R$ is the union, over the fibres of $\pr_{\cY}$, of the connected components of the complements of the critical graphs of $u$ containing germs of paths exiting from the section $Q_1$ with velocity the vector field $X_1$; $v$ is moreover a harmonic conjugate to $u$ on each portion of fibre where it is defined.
\end{itemize}
Indeed the datum of such a fibre bundle is equivalent to a map $\kappa\colon\cX\to\fM_{g,n}$ together with a section of the pull back affine bundle $\kappa^*(\check\forg_{\Harm})$; in other words, the datum of such a fibre bundle is equivalent to a map $\check\kappa_{\Harm}\colon\cX\to\cHarmud$.

Finally, by restriction of the previous case, $\pr_{\ccO}$ is universal among fibre bundles over paracompact spaces with fibres Riemann surfaces $\Sigma_{g,n}$ endowed with a $\ud$-directed meromorphic function.

\subsection{Homotopy equivalence between \texorpdfstring{$\ccOud$}{Ogn[ud]} and \texorpdfstring{$\fM_{g,n}$}{Mgn}}
\label{subsec:ccOhtequivmgn}
In this subsection we prove the following theorem, which is the first main result of the article.
\begin{thm}
 \label{thm:main1}
Suppose $d\geq 2g+n-1$. Then the map $\forg\colon\ccOud\to\fM_{g,n}$ is a fibre bundle map with contractible fibres,
hence a homotopy equivalence; moreover the space $\ccOud$ is an orientable manifold of dimension $2h=2(2g+n+d-2)$.
\end{thm}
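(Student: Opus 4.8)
The plan is to study $\forg_{\ccO}$ fibrewise over $\fM_{g,n}$: I identify each fibre with $\ccO(\fr,\ud)$, show by Riemann--Roch that these fibres are nonempty complex affine spaces of constant dimension, and then promote this to a locally trivial affine bundle by means of the presentation of $\ccOud$ as the kernel of $\int$. Throughout I assume $g>0$ or $n>1$, postponing the exceptional case $g=0,n=1$ to the end. Fixing $\fm=[\fr]\in\fM_{g,n}$, the discussion following Definition \ref{defn:ccOud} identifies $\forg_{\ccO}^{-1}(\fm)$ with $\ccO(\fr,\ud)\subset\cO(\fr,D)$. Writing $D-\uQ:=\sum_{i=1}^n(d_i-1)Q_i$, a genuine divisor of degree $d-n$ since each $d_i\ge1$, I would introduce the leading-coefficient evaluation
\[
\mathrm{ev}\colon\cO(\fr,D)\to\C^n,\qquad f\mapsto\big(\text{coefficient of }w_i^{-d_i}\text{ in }f\big)_{i=1}^n,
\]
a $\C$-linear map with kernel $\cO(\fr,D-\uQ)$ and satisfying $\ccO(\fr,\ud)=\mathrm{ev}^{-1}(1,\dots,1)$; once nonemptiness is known, this exhibits $\ccO(\fr,\ud)$ as a complex affine space modelled on $\cO(\fr,D-\uQ)$.

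The crux, and the only place where the hypothesis $d\ge 2g+n-1$ enters, is Riemann--Roch. Since $\deg(D-\uQ)=d-n\ge 2g-1$, Theorem \ref{thm:RiemannRoch} gives $\dim_\C\cO(\fr,D-\uQ)=d-n-g+1$; likewise $\deg(D)=d\ge 2g-1$ gives $\dim_\C\cO(\fr,D)=d-g+1$. Subtracting, $\mathrm{ev}$ has rank $(d-g+1)-(d-n-g+1)=n$ and is therefore surjective, so in particular $(1,\dots,1)$ lies in its image and $\ccO(\fr,\ud)\ne\emptyset$. Hence every fibre is a nonempty complex affine space of complex dimension $d-n-g+1$, in particular contractible, and these dimensions are manifestly independent of $\fr$.

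For local triviality I would use Definition \ref{defn:cFgn}, which presents $\ccOud$ as the preimage of the zero section under the affine bundle map $\int\colon\cHarmud\to H^1(\mscF_{g,n})$ over $\fM_{g,n}$. On each fibre the kernel of the linear part of $\int$ is $\ccO(\fr,\ud)$, of real dimension $2(d-n-g+1)$; as $\cHarm(\fr,\ud)$ has real dimension $2d-n+1$, this linear part has rank $(2d-n+1)-2(d-n-g+1)=2g+n-1$, equal to the rank of the target bundle $H^1(\mscF_{g,n})$. Thus $\int$ has surjective linear part of constant rank, so $\ccOud=\ker(\int)$ is an affine subbundle of $\cHarmud$, modelled on a vector bundle $V$ with fibre $\cO(\fr,D-\uQ)$. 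Being an affine bundle over the paracompact space $\fM_{g,n}$, it admits a section $s$, and the fibrewise affine homotopy $(x,t)\mapsto t\,x+(1-t)\,s(\forg_{\ccO}(x))$ deformation retracts $\ccOud$ onto $s(\fM_{g,n})$; hence $\forg_{\ccO}$ is a homotopy equivalence with inverse $s$. Through $s$ the total space is identified with that of $V$, so $T(\ccOud)\cong\forg_{\ccO}^{*}\big(T\fM_{g,n}\oplus V\big)$; since $\fM_{g,n}$ is orientable and $V$ is a \emph{complex}, hence canonically oriented, vector bundle, $\ccOud$ is an orientable manifold of dimension $\dim\fM_{g,n}+2\dim_\C\ccO(\fr,\ud)=(6g-6+4n)+2(d-n-g+1)=2h$. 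The exceptional case $g=0,n=1$ is treated by hand: $\ccO_{0,1}[d]\cong\NMonPol_d\cong\C^{d-1}$ is a contractible orientable manifold of real dimension $2(d-1)=2h$ mapping to the contractible space $\fM_{0,1}$.

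I expect the real difficulty to lie not in the dimension count --- which Riemann--Roch essentially forces --- but in upgrading ``contractible fibres of constant dimension'' to ``locally trivial affine bundle''. The constant-rank property of $\int$ is exactly what secures this, and the hypothesis $d\ge 2g+n-1$ is used precisely to keep the Riemann--Roch dimension $\dim_\C\cO(\fr,D-\uQ)$ locally constant over $\fM_{g,n}$, so that no jumping can occur and $\ker(\int)$ cannot fail to be a subbundle. A secondary point requiring care is the complex-bundle structure on the vertical tangent bundle used for orientability, namely that the spaces $\cO(\fr,D-\uQ)$ assemble into a holomorphic vector bundle over $\fM_{g,n}$ --- the pushforward along the universal curve of the line bundle associated with $D-\uQ$.
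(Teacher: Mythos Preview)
Your proof is correct and follows essentially the same route as the paper: Riemann--Roch under the hypothesis $d\ge 2g+n-1$ to identify each fibre $\ccO(\fr,\ud)$ as a nonempty complex affine space of dimension $d-g+1-n$, then the presentation $\ccOud=\ker\int$ together with a dimension count to conclude that $\int$ is fibrewise surjective and hence that $\forg_{\ccO}$ is an affine bundle, with orientability coming from the complex structure on the fibres. The only cosmetic difference is in the nonemptiness step: the paper constructs functions $f_j\in\cO(\fr,\cD+Q_j)\setminus\cO(\fr,\cD)$ one pole at a time and sums them, whereas you package this as surjectivity of the leading-coefficient map $\mathrm{ev}$ via a single rank computation---your version is slightly more streamlined but amounts to the same Riemann--Roch comparison.
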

The condition $d\geq 2g+n-1$ is needed in the following lemma, in which we apply Theorem \ref{thm:RiemannRoch} to prove that $\ccO(\fr,\ud)$ is non-empty, for all Riemann structures $\fr$ on $\sgn$.
\begin{lem}
 \label{lem:ccOnonempty}
Suppose $d\geq 2g+n-1$. Then for every Riemann structure $\fr$ on $\sgn$,
$\ccO(\fr,\ud)$ is non-empty and is a complex affine space of complex dimension $d-g+1-n$.
\end{lem}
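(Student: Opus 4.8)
The plan is to realise $\ccO(\fr,\ud)$ as a fibre of a surjective complex-linear map out of the finite-dimensional vector space $\cO(\fr,D)$, and then to read off both non-emptiness and the dimension directly from Riemann--Roch (Theorem \ref{thm:RiemannRoch}).

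First I would record that, as already observed after Definition \ref{defn:ccOr}, a $\ud$-directed meromorphic function lies in $\cO(\fr,D)$: it has a pole of order at most $d_i$ at each $Q_i$ and is holomorphic elsewhere, and the $\ud$-directed condition constrains only the top Laurent coefficient at each $Q_i$. The point requiring (a little) care is that, for each $i$, the coefficient of $w_i^{-d_i}$ in the Laurent expansion of $f$ at $Q_i$ is \emph{independent} of the choice of normal chart: a change of normal chart has the form $w_i\mapsto w_i+\mathrm{h.o.t.}$, and expanding $w_i^{-d_i}$ shows that only the lower-order pole coefficients are affected while the leading one is preserved. Hence $f\mapsto(a_1,\dots,a_n)$, with $a_i$ the coefficient of $w_i^{-d_i}$, defines a genuine $\C$-linear map $\ell\colon\cO(\fr,D)\to\C^n$, and by Definition \ref{defn:ccOr} one has $\ccO(\fr,\ud)=\ell^{-1}(1,\dots,1)$.

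Next I would identify $\ker\ell$: a function $f\in\cO(\fr,D)$ satisfies $a_i=0$ precisely when its pole order at $Q_i$ drops strictly below $d_i$, so $\ker\ell=\cO\bigl(\fr,\sum_{i=1}^n(d_i-1)Q_i\bigr)$, the space attached to the divisor $D-\uQ$. Now apply Riemann--Roch twice, using the vanishing of the dual term explained in the remark following Theorem \ref{thm:RiemannRoch}. Since $n\ge1$, the hypothesis $d\ge 2g+n-1$ gives $\deg(D)=d\ge 2g-1$, whence $\dim_{\C}\cO(\fr,D)=d-g+1$. Likewise $\deg(D-\uQ)=d-n\ge 2g-1$ --- and this is exactly the inequality for which the full strength of $d\ge 2g+n-1$ is needed --- so $\dim_{\C}\ker\ell=(d-n)-g+1$.

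Finally, comparing dimensions, the image of $\ell$ has dimension $(d-g+1)-(d-n-g+1)=n$, so $\ell$ is surjective onto $\C^n$. In particular $(1,\dots,1)$ lies in the image, so $\ccO(\fr,\ud)=\ell^{-1}(1,\dots,1)$ is non-empty; being a coset of $\ker\ell$ under the linear map $\ell$, it is a complex affine space of dimension $\dim_{\C}\ker\ell=d-g+1-n$, as claimed. I expect no genuine obstacle here: the only subtle step is the chart-independence and $\C$-linearity of the leading-coefficient map $\ell$ (together with the identification $\ker\ell=\cO(\fr,D-\uQ)$), after which the statement is a bookkeeping application of Riemann--Roch, with the bound $d\ge 2g+n-1$ entering precisely to force the vanishing of the $\cO(\fr,K(\fr)-\mathcal{D})$ terms for both $\mathcal{D}=D$ and $\mathcal{D}=D-\uQ$.
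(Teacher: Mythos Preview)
Your proof is correct. Both your argument and the paper's rest on the same Riemann--Roch computation showing $\dim_{\C}\cO(\fr,D-\uQ)=d-g+1-n$, and both identify $\ccO(\fr,\ud)$ as a (possibly empty) translate of this space inside $\cO(\fr,D)$.

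The only difference is in how non-emptiness is established. You package the leading Laurent coefficients as a linear map $\ell\colon\cO(\fr,D)\to\C^n$, compute $\dim\cO(\fr,D)=d-g+1$ via a second application of Riemann--Roch, and conclude surjectivity of $\ell$ by a dimension count. The paper instead constructs an explicit element: for each $j$ it applies Riemann--Roch to the intermediate divisor $D-\uQ+Q_j$ to produce a function $f_j$ with a pole of order exactly $d_j$ at $Q_j$ and lower-order poles elsewhere, normalises the leading coefficient, and sums the $f_j$. Your route is slightly more economical (two Riemann--Roch applications rather than $n+1$) and gives surjectivity of $\ell$ in one stroke; the paper's route has the small advantage of exhibiting a concrete member of $\ccO(\fr,\ud)$, which in effect is a construction of preimages under $\ell$ of the standard basis vectors of $\C^n$.
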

\begin{proof}
We consider the divisor $\cD:=D-\sum_{i=1}^nQ_i$, whose degree is $\geq 2g-1$: by Theorem \ref{thm:RiemannRoch} we have
\[
 \dim_{\C}\cO\pa{\fr,\cD}=d-g+1-n.
\]
We note that $\cO\pa{\fr,\cD}$ is a sub-vector space of $\cO(\fr,D)$, and that 
$\ccO(\fr,\ud)$ is either empty or a translate of $\cO\pa{\fr,\cD}$ in $\cO(\fr,D)$:
indeed the difference of two meromorphic functions in $\cO(\fr,D)$ lies in $\cO\pa{\fr,\cD}$.

To prove that $\ccO(\fr,\ud)$ is non-empty, consider for all $1\leq j\leq n$ the divisor $\cD+Q_j$, whose
degree is $\geq 2g$: again by Theorem \ref{thm:RiemannRoch} we have
\[
 \dim_{\C}\cO\pa{\fr,\cD+Q_j}=d-g+2-n,
\]
so that, by comparing dimensions, we can find a function
\[
f_j\in\cO\pa{\fr,\cD+Q_j}\setminus \cO\pa{\fr,\cD}.
\]
Note that $f_j$ has a pole of order exactly $d_j$ at $Q_j$;
up to multiplying by a suitable constant in $\C^*$ we may assume that the Laurent expansion of $f_j$ around $Q_j$, read in a normal
chart $w_j$, has the form $1/w_j^{d_j}+\mathrm{h.o.t.}$.
We can now consider the sum $f=\sum_{j=1}^nf_j$, which belongs to $\ccO(\fr,\ud)$ and witnesses that the latter is non-empty.
\end{proof}
Using Lemma \ref{lem:ccOnonempty}, we are ready to prove Theorem \ref{thm:main1}.
\begin{proof}[Proof of Theorem \ref{thm:main1}]
Assume first $g> 0$ or $n>1$.
Recall that $\check\forg_{\Harm}\colon\cHarmud\to\fM_{g,n}$ is a real affine bundle of dimension $2d-n+1$, whereas $H^1(\mpr)\colon H^1(\mscF_{g,n})\to\fM_{g,n}$ is a vector bundle of dimension $2g+n-1$. Lemma \ref{lem:ccOnonempty}
implies that $\int\colon\Harmud\to H^1(\mscF_{g,n})$ is surjective on fibres:
indeed the difference of dimensions $2d-n+1-(2g+n-1)=2d-2g+2-2n$ coincides with the dimension of the fibres of $\forg_{\ccO}=\ker\int$. Since $\ccOud$ is the kernel of a surjective map of a real affine bundle onto a real vector bundle, we conclude that
$\forg_{\ccO}\colon\ccOud\to\fM_{g,n}$ is also an affine bundle map, with fibres of real dimension $2(d-g+1-n)$.
Since fibres of $\forg_{\ccO}$ have a complex structure, they are equipped with a canonical orientation.

Since $\forg_{\ccO}\colon\ccOud\to\fM_{g,n}$ is a fibre bundle map with contractible
fibres, we obtain the first statement of Theorem \ref{thm:main1}. Moreover $\ccOud$ is the total space of a fibre bundle over an orientable manifold of real dimension $6g-6+4n$, with fibres being oriented manifolds of real dimension $2(d-g+n-1)$: it follows that $\ccOud$ is an orientable manifold of real dimension $6g-6+4n + 2(d-g+1-n)=2h$.

In the case $g=0$ and $n=1$, the map $\forg_{\ccO}\colon\ccOud\to\fM_{g,n}$ is also a homotopy equivalence, as
$\ccO_{0,1}[d]\cong\mathfrak{NMonPol}_d$ is contractible; the real dimension of
the unique fibre of $\forg_{\ccO}$ is $2(d-1)=2h$; note again that this is $2$ \emph{less} than the real dimension of
$\ccO(\frst,d)\cong\mathfrak{MonPol}_d$, which is $2d$.
\end{proof}
In the case $d<2g+n-1$ we fail in proving the surjectivity of $\forg_{\ccO}\colon\ccOud\to\fM_{g,n}$, but it is still true that fibres $\forg_{\ccO}^{-1}(\fm)$ are either empty or contractible (as they have a structure of affine complex space), so one could expect $\ccOud$ to capture the homotopy type of a subspace of $\fM_{g,n}$. For example, when $n=1$ and $d=2$, the image of 
$\forg_{\ccO}\colon\ccO_{g,1}[d]\to\fM_{g,1}$ is precisely the \emph{hyperelliptic moduli space}, i.e. the subspace of $\fM_{g,1}$ containing moduli $\fm=[\fr]$ such that $(\Sigma_{g,1},\fr)$ admits a hyperelliptic involution fixing its marked point $Q$ (and sending $X$ to $-X$). It would be interesting to generalise this observation to higher values of $d$; similar stratifications of the moduli space $\fM_g$ of \emph{closed} Riemann surfaces have been considered, among other, in \cite{Rauch, Farkas, Arbarello, ArbarelloII, Lax, Diaz, Looijenga95}.

\section{The Poincar\'e property for the PMQ \texorpdfstring{$\fS_d\geo$}{Sdgeo}}
\label{sec:Poincare}
Let $d\ge1$ be fixed throughout the section and let $\fS_d$ denote the group of permutations of the set $\set{1,\dots,d}$. In \cite{Bianchi:Hur1}
the word length norm on $\fS_d$ with respect to the generating set of all transpositions was used to define a \emph{partially multiplicative quandle} (shortly PMQ) called $\fS_d\geo$. The underlying set of $\fS_d\geo$ is $\fS_d$. Conjugation is defined as the conjugation of the group $\fS_d\geo$. The partial multiplication coincides, whenever defined, with the multiplication in $\fS_d$: a couple of permutations $(\sigma,\tau)$ admits a product in $\fS_d\geo$ if and only if it is \emph{geodesic}, i.e. the permutation $\sigma\tau\in\fS_d$ has norm equal to the sum of the norms of $\sigma$ and $\tau$. By construction, $\fS_d\geo$ is a \emph{normed} PMQ, i.e. it is endowed with a morphism of PMQs $N\colon\fS_d\geo\to\N$, sending 
$(\fS_d\geo)_+:=\fS_d\geo\setminus\set{\one}$ to positive natural numbers. We remark that $\fS_d\geo$
is augmented (a product of elements $\neq\one$ is $\neq \one$) and locally finite (every element can be written in finitely many ways as a finite product of elements $\neq\one$). In this section we show the following theorem.
\begin{thm}
  \label{thm:main2}
For $d\ge1$ the partially multiplicative quandle $\fS_d\geo$ is Poincar\'e, i.e.
all components of $\Hur^{\Delta}(\fS_d\geo)$ are topological manifolds.
\end{thm}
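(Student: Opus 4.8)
The plan is to prove that every component of $\Hur^{\Delta}(\fS_d\geo)$ is locally Euclidean; since Hausdorffness and second countability are built into the construction of simplicial Hurwitz spaces, this suffices to conclude that each component is a topological manifold. The argument is entirely local and rests on the locality property of the Hurwitz--Ran construction: a point of $\Hur^{\Delta}(\fS_d\geo)$ records a configuration of branch points in a disk decorated by monodromies in $\fS_d\geo$, and a neighbourhood of such a point depends only on the local monodromies at the individual, possibly collided, branch points. I would first stratify the space by the collision pattern of the branch points. On the open stratum all $m$ branch points are distinct and carry transposition labels, and there the space is the finite covering, determined by the braid action on geodesic transposition-factorisations, of the configuration space of $m$ unordered points in the disk; this covering is a manifold of real dimension $2m$.

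For the deeper strata I would invoke locality to write a neighbourhood of a point as a product, indexed by the collided clusters, of \emph{local models}. A cluster in which $m'$ transpositions have fused carries a label $\sigma\in\widehat{\fS_d\geo}$ of norm $m'$, and its local model is the simplicial Hurwitz space of the single branch point $\sigma$: the space of all resolutions of $\sigma$ into $m'$ simple branch points inside a small disk, compactified by the central collision. Partitioning the $d$ sheets according to the orbits of the subgroup generated by the colliding transpositions, this local model factors further as a product over the connected blocks of the associated branched cover, each block contributing the space of \emph{connected} branched covers of the disk of the relevant degree, genus and boundary type, carrying the prescribed number of simple branch points.

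The crux is to show that each block local model is homeomorphic to an open subset of a complex affine space of complex dimension equal to the number of simple branch points it supports. When the block has genus $0$ and connected boundary the resolutions are monic polynomials of degree $b$ read through their critical values, so the model is a neighbourhood of the origin in $\NMonPol_b\cong\C^{b-1}$; the remaining genus-$0$ cases are analogous spaces of rational resolutions and are equally smooth. The genuinely hard case, which I expect to be the main obstacle, is a block of positive genus $g'>0$: the fully collided cover then carries ``hidden genus'' concentrated at its centre, it is no longer of polynomial type, and one must verify that the simplicial-Hurwitz topology still produces a Euclidean neighbourhood rather than a cone on some exotic link. The guiding example is $d=2$, where the unique transposition forces every branch point to share the same label, the braiding acts trivially, and the whole component is literally the symmetric product $\mathrm{Sym}^{1+2g}(\C)\cong\C^{1+2g}$, which is smooth across every collision. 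I would settle the general positive-genus case in the same spirit, producing explicit Euclidean coordinates on the local model from the Hilbert-uniformisation / slit picture recalled in Subsection~\ref{subsec:harm}: the horizontal and vertical coordinates of the slits supply free real parameters that extend continuously across the collision strata and identify the local model with an open region of an affine space.

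Assembling the factors, each neighbourhood in $\Hur^{\Delta}(\fS_d\geo)$ is a finite product of Euclidean block models together with the affine factor recording the positions of the collided centres; hence every component is locally Euclidean, and therefore a topological manifold. As a consistency check, in the range $d\ge 2g+n-1$ Theorem~\ref{thm:main1} already identifies the corresponding component with the smooth manifold $\ccOud$, and the local models computed above must reproduce that smooth structure; the force of the present theorem is that smoothness persists in the complementary range, where precisely the genus-creating collisions occur.
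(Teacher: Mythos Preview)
Your locality reduction---factoring a neighbourhood as a product of local models over the collided clusters---is the right first move, and it is exactly what \cite[Theorem~9.3]{Bianchi:Hur2} provides: to verify the Poincar\'e property it suffices to check that $\Hur(\mcR;(\fS_d\geo)_+)_{\hat\sigma}$ is a manifold for each $\sigma\in\fS_d\geo$. The gap in your argument is the assertion that a fused cluster carries a label $\sigma\in\widehat{\fS_d\geo}$. In the Hurwitz--Ran space a configuration is a pair $(P,\psi)$ with $\psi\colon\fQ(P)\to\fS_d\geo$, so the local monodromy at any point of $P$ lies in $\fS_d\geo$ itself, never in the completion. Branch points are permitted to collide only when their monodromies multiply \emph{geodesically}---that is precisely what the ``partial'' in PMQ enforces---and the geodesic condition on a factorisation of $\sigma\in\fS_d\geo$ forces the local branched cover of the disc to be a disjoint union of discs. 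Your ``genuinely hard case'' of a positive-genus local block therefore simply does not occur; forbidding those genus-creating collisions is the entire point of working with $\fS_d\geo$ rather than the group $\fS_d$. Your $d=2$ example already shows this: since $(1,2)\cdot(1,2)$ is not geodesic, no two branch points can merge, and the component is the \emph{configuration} space of $1+2g$ distinct points, not the full symmetric product.

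Once this is understood, the paper's proof is short. For $\sigma\in\fS_d\geo$ with cycle decomposition $c_1\cdots c_\lambda$, \cite[Corollary~7.5]{Bianchi:Hur1} shows that every geodesic factor of $\sigma$ lies in the sub-PMQ $\fS_{c_1}\geo\times\dots\times\fS_{c_\lambda}\geo$, whence $\Hur(\mcR;(\fS_d\geo)_+)_{\hat\sigma}\cong\prod_i\Hur(\mcR;(\fS_{c_i}\geo)_+)_{\hat c_i}$, reducing to the long-cycle case. For $\sigma=\lc_d$ one builds a continuous bijection $\NMonPol_d(\cR)\to\Hur(\cR;(\fS_d\geo)_+)_{\hat\lc_d}$ by sending a polynomial to its critical values decorated with their local monodromies; both sides are compact Hausdorff, so this is a homeomorphism, and restricting to interiors exhibits $\Hur(\mcR;(\fS_d\geo)_+)_{\hat\lc_d}$ as the open set $\NMonPol_d(\mcR)\subset\NMonPol_d\cong\C^{d-1}$. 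No appeal to slit coordinates is needed.
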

We immediately remark that for $d=1$ the PMQ $\fS_1\geo$ is isomorphic to the trivial PMQ $\set{\one}$, so that $\Hur^{\Delta}(\fS_d\geo)$ is a point, and in particular a manifold of dimension 0. We will henceforth focus on the case $d\ge2$.
\begin{nota}
\label{nota:cR}
 For $t\ge0$ we denote by $\cR_t$ the closed rectangle $[0,t]\times[0,1]\subset\C$, which for $t=0$ is a vertical segment; we denote by $\mcR_t$ the interior of $\cR_t$, which for $t=0$ is empty. For $t=1$ we abbreviate $\cR_1=\cR$ and $\mcR_1=\mcR$.
\end{nota}
The simplicial Hurwitz space $\Hur^{\Delta}(\fS_d\geo)$ was introduced in \cite{Bianchi:Hur1}; since $\fS_d\geo$ is a locally finite PMQ, by \cite[Theorem 9.1]{Bianchi:Hur2} the space $\Hur^{\Delta}(\fS_d\geo)$ is homeomorphic to the \emph{Hurwitz-Ran space} $\Hur(\mcR,(\fS_d\geo)_+)$: elements in the latter space are couples $(P,\psi)$ such that $P\subset\mcR$ is finite, and $\psi\colon\fQ(P)\to\fS_d\geo$ is an augmented map of PMQs, assigning in a consistent way an element in $(\fS_d\geo)_+$ to each based loop in $\pi_1(\C\setminus P)$ that spins clockwise around exactly one point of $P$. By \cite[Theorem 6.14]{Bianchi:Hur1} the total monodromy $\hat\totmon\colon\Hur^{\Delta}(\fS_d\geo)\to\widehat{\fS_d\geo}$ induces a bijection $\pi_0\pa{\Hur^{\Delta}(\fS_d\geo)}\to\widehat{\fS_d\geo}$.

The complete PMQ $\widehat{\fS_d\geo}$ was described in \cite[Proposition 7.13]{Bianchi:Hur1}: its elements are given by sequences $(\sigma;\fP_1,\dots,\fP_\ell;r_1,\dots,r_\ell)$, where $\sigma\in\fS_d$, the sets $\fP_1,\dots,\fP_\ell$ form an unordered partition of $\set{1,\dots,d}$, and $r_1,\dots,r_\ell$ are non-negative integers, each corresponding to one piece of the partition; moreover, certain mild combinatorial conditions must be satisfied. By \cite[Lemma 7.12]{Bianchi:Hur1}, each element in $\widehat{\fS_d\geo}$ can be also represented as a product $\hat\tr_1\dots\hat\tr_r$, where $\hat\tr_i\in\widehat{\fS_d\geo}$ is the image of the transposition $\tr_i\in\fS_d$ under the canonical inclusion of PMQs $\fS_d\geo\hookrightarrow\widehat{\fS_d\geo}$; the decomposition of an element in $\widehat{\fS_d\geo}$ into transpositions is unique up to standard moves \cite[Definition 3.6]{Bianchi:Hur1}.

In order to prove Theorem \ref{thm:main2} we thus have to show that for all $a\in\widehat{\fS_d\geo}$ the space
$\Hur^{\Delta}(\fS_d\geo)(a)$, or equivalently the homeomorphic space $\Hur(\mcR,(\fS_d\geo)_+)_a$, is a topological manifold. By \cite[Theorem 9.3]{Bianchi:Hur2} we can restrict to the case $a=\hat\sigma$, i.e. $a$ is the image of an element $\sigma\in\fS_d\geo$ along the inclusion $\fS_d\geo \hookrightarrow\widehat{\fS_d\geo}$; in the notation of \cite[Proposition 7.13]{Bianchi:Hur1} we have
$\hat\sigma=(\sigma;c_1,\dots,c_n;0,\dots,0)$, where $c_1,\dots,c_n$ are the cycles of a permutation $\sigma\in\fS_d$, considered as subsets of $\set{1,\dots,d}$. Using the action by global conjugation \cite[Subsection 6.2]{Bianchi:Hur2}, it suffices to consider one representative $\sigma$ in each conjugacy class of $\fS_d\geo$.

We will first address the case $\sigma=\lc_d\in\fS_d\geo$, where $\lc_d=(1,\dots,d)$ is the \emph{long cycle}, and then we will deduce the case of a generic permutation $\sigma\in\fS_d\geo$.

\subsection{The case \texorpdfstring{$\sigma=\lc_d$}{sigma=lcd}}
\label{subsec:monic}
Recall Definition \ref{defn:NMonPol}, and let $f(z)=z^d+a_{d-2}z^{d-2}+\dots+a_0$ be a normalised monic polynomial of degree $d$.
The derivative of $f$ is the polynomial $f'(z)=dz^{d-1}+(d-2)a_{d-2}z^{d-3}+\dots+a_1$; denote the $d-1$ roots of $f'$ by $\zeta_1,\dots,\zeta_{d-1}$. The images $f(\zeta_1),\dots,f(\zeta_{d-1})$ are the \emph{critical values} of $f$ when considered as a branched cover $f\colon\C\to\C$, and they form an element of the $(d-1)$-fold symmetric power of $\C$, denoted $\SP^{d-1}(\C)=\C^{d-1}/\fS_{d-1}$. We obtain a continuous and proper map $\cv\colon\NMonPol_d\to\SP^{d-1}(\C)$, with finite fibres.
\begin{defn}
 We denote by $\NMonPol_d(\cR)$ (respectively $\NMonPol_d(\mcR)$) the preimage under $\cv$ of $\SP^{d-1}(\cR)$ (respectively of $\SP^{d-1}(\mcR)$).
\end{defn}
Note that the space $\NMonPol_d(\cR)$ is compact.
\begin{prop}
 \label{prop:tildecv}
There exists a continuous bijection
\[
\check\cv^{\cR}\colon\NMonPol_d(\cR)\to\Hur(\cR;(\fS_d\geo)_+)_{\hat\lc_d},
\]
restricting to a bijection $\NMonPol_d(\mcR)\to\Hur(\mcR;(\fS_d\geo)_+)_{\hat\lc_d}$. 
\end{prop}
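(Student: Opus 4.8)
The plan is to define $\check\cv^\cR$ as the map recording, for a normalised monic polynomial, both its critical values and the monodromy of the associated covering. Given $f\in\NMonPol_d(\cR)$, let $P\subset\cR$ be the finite set of distinct critical values of $f$, so that $f$ restricts to a genuine $d$-fold covering $f^{-1}(\C\setminus P)\to\C\setminus P$. Since $f(z)=z^d+\mathrm{h.o.t.}$, over the (simply connected, branch-value-free) region below $\cR$ the $d$ sheets are asymptotically the $d$ branches of $w\mapsto w^{1/d}$; ordering these branches by argument furnishes a canonical trivialisation, hence an identification of the fibre over a basepoint below $\cR$ with $\set{1,\dots,d}$ and a monodromy homomorphism $\pi_1(\C\setminus P)\to\fS_d$. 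I then let $\psi$ be the induced assignment on $\fQ(P)$ and set $\check\cv^\cR(f)=(P,\psi)$.

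First I would check that $(P,\psi)$ is a well-defined point of $\Hur(\cR;(\fS_d\geo)_+)_{\hat\lc_d}$. The covering is connected since its total space $\C$ is, so $\psi$ is transitive, and with the chosen labelling the monodromy at infinity is a $d$-cycle, which the convention normalises to $\lc_d$. The crucial point is that $\psi$ takes geodesic values in $(\fS_d\geo)_+$: the local monodromy $\sigma_w$ around each $w\in P$ is a product of disjoint cycles, one for each preimage $\zeta$ of $w$ of length $e_\zeta$, so it is non-trivial and $N(\sigma_w)=\sum_{f(\zeta)=w}(e_\zeta-1)$ equals the total local ramification. Applying Riemann-Hurwitz to $f\colon\C P^1\to\C P^1$, whose only ramification over $\infty$ is a single point of index $d$, gives $\sum_{w\in P}N(\sigma_w)=(2d-2)-(d-1)=d-1=N(\lc_d)$, so the ordered product of local monodromies is a geodesic factorisation of $\lc_d$. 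As every subproduct of a geodesic factorisation is again geodesic, $\psi$ respects the partial product and is an augmented PMQ map with total monodromy $\hat\lc_d$; well-definedness under collisions of critical values is automatic, since distinct ramification points over a common value contribute disjoint cycles.

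For bijectivity I would invoke the Riemann existence theorem. Given $(P,\psi)$, the monodromy determines a connected branched cover $\bar\cS\to\C P^1$, unique up to isomorphism; transitivity together with the value $\lc_d$ of the monodromy at infinity forces a single point of full ramification over $\infty$, and since $\hat\lc_d$ is geodesic we again have finite ramification $d-1$, whence Riemann-Hurwitz gives $g(\bar\cS)=0$ and $\bar\cS\cong\C P^1$. Thus $f$ is a degree-$d$ rational map with a unique pole of order $d$; placing this pole at $\infty$ realises $f$ as a monic polynomial, and scaling and translating the source coordinate to make it monic with vanishing $z^{d-1}$-coefficient, while matching the canonical labelling to $\psi$, pins the coordinate down completely. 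This yields a normalised monic polynomial with critical values $P$ and monodromy $\psi$, giving surjectivity. Injectivity is the uniqueness clause: two such polynomials with $f_1=f_2\circ\chi$ differ by $\chi(z)=az+b$ on the source, where the normalisation forces $b=0$ and $a^d=1$, and compatibility with the labelling forces $a=1$, so $f_1=f_2$.

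Finally I would verify continuity of $\check\cv^\cR$ and the interior statement. The critical values depend algebraically on the coefficients of $f$, and the monodromy varies compatibly -- locally constant away from collisions and merging geodesically as critical values collide -- which is exactly the behaviour encoded by the Hurwitz-Ran topology of \cite{Bianchi:Hur2}, giving continuity. The restriction claim is immediate, as $f$ has all critical values in $\mcR$ if and only if $P\subset\mcR$, so $\check\cv^\cR$ carries $\NMonPol_d(\mcR)$ bijectively onto $\Hur(\mcR;(\fS_d\geo)_+)_{\hat\lc_d}$. I expect the main obstacle to be the bijectivity, specifically isolating the \emph{normalised monic} representative in the surjectivity step: Riemann existence supplies the cover only up to isomorphism, and one must check that the rigidifying data -- the trivialisation below, encoded by the $\fS_d$-valued (rather than merely conjugacy-class-valued) monodromy, together with the monic and vanishing-$z^{d-1}$ conditions -- single out a unique polynomial, removing the residual $\Z/d$ ambiguity. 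Continuity, though requiring care with the Hurwitz-Ran topology, is comparatively routine.
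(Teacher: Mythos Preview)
Your proposal is correct and follows essentially the same approach as the paper: both define $\check\cv^{\cR}$ by recording critical values plus monodromy, verify that the local monodromies lie in $(\fS_d\geo)_+$ with total monodromy $\hat\lc_d$ via a Riemann--Hurwitz count, and prove bijectivity by building the inverse branched cover and computing its Euler characteristic to be $2$.

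There are two presentational differences worth noting. First, the paper sets up the trivialisation of $f^{-1}(\bH_{<0})$ more carefully: it fixes one for $f_0(z)=z^d$ and then propagates it to all $f\in\NMonPol_d(\cR)$ via an explicit deformation retraction $\rho(f,t)=t^d f(z/t)$. Your ``order the asymptotic branches of $w^{1/d}$ by argument'' amounts to the same thing, but the paper's formulation makes it transparent that the trivialisation varies \emph{continuously} in $f$, which is what feeds into the continuity proof. Second, the paper proves bijectivity by explicitly constructing the inverse map $\bc^{\cR}$ (build the cover from $(P,\psi)$, compactify, identify with $\C P^1$, read off the polynomial) rather than arguing injectivity and surjectivity separately; your residual-$\Z/d$ discussion is exactly the content that the paper absorbs into the phrase ``straightforward''. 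Your continuity sketch (``locally constant away from collisions and merging geodesically'') is correct in spirit; the paper spells it out by taking an adapted covering $\uU$, a connected neighbourhood $V$ with $\cv(V)\subset\SP^{d-1}(\uU\cap\cR)$, and using local constancy of the $\fS_d$-valued monodromy along a path in $V$.
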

Let us assume for a moment that Proposition \ref{prop:tildecv} holds. Both $\NMonPol_d(\cR)$
and $\Hur(\cR;\fS_d\geo)_{\hat\lc_d}$ are compact Hausdorff spaces: in particular the second is homeomorphic
to the realisation of a finite semi-bisimplicial set $\Arr^{\mathrm{ndeg}}(\fS_d\geo)(\hat\lc_d)$.
It follows that $\check\cv^{\cR}$ is a homeomorphism, and by restriction $\Hur(\mcR;(\fS_d\geo)_+)_{\hat\lc_d}$
is homeomorphic to the subspace $\NMonPol_d(\mcR)$ of $\NMonPol_d$, which is a manifold.

The map $\check\cv^{\cR}$ will be a lift of the map $\cv\colon\NMonPol_d(\cR)\to\Hur(\cR;\N_+)_{d-1}\cong\SP^{d-1}(\cR)$
along the map $N_*\colon \Hur(\cR;(\fS_d\geo)_+)_{\hat\lc_d}\to\Hur(\cR;\N_+)_{d-1}$ induced by the augmented map of PMQs $N\colon\fS_d\geo\to \N$; this explains the notation ``$\check\cv$''. The superscript ``$\cR$'' distinguishes this map from a similar one considered in Section \ref{sec:homeo}.

\begin{defn}
 \label{defn:f0}
 We denote by $f_0\in\NMonPol_d(\cR)$ the polynomial $f_0(z)=z^d$. We define a deformation retraction $\rho\colon\NMonPol_d(\cR)\times[0,1]\to\NMonPol_d(\cR)$ onto $f_0$ by the formula $\rho(f,t)=f_0$ for $t=0$, and
 $\rho(f,t)=t^d f(z/t)$ for $t>1$. Note that the critical values of $t^d f(z/t)$ are obtained by multiplying the critical values of $f(z)$ by $t^d$, and hence they are contained in $\cR$ as well.
\end{defn}
Let $f(z)\in\NMonPol_d(\cR)$, and let $P=\set{f(\zeta_1),\dots,f(\zeta_{d-1})}$, considered as a finite subset of $\cR$ of cardinality at most $d-1$, where $\zeta_1,\dots,\zeta_{d-1}$ are the roots of $f'(z)$, as above. The branched cover $f\colon\C\to\C$ restricts to a genuine $d$-fold cover $f\colon f^{-1}(\CmP)\to\CmP$. In particular, the restriction $f\colon f^{-1}(\C\setminus\cR)\to\C\setminus\cR$ is a $d$-fold cyclic covering of the annulus
$\C\setminus\cR$. Varying $f$ in $\NMonPol_d(\cR)$, we obtain a family of $d$-fold cyclic coverings of $\C\setminus\cR$.

We fix once and for all a trivialisation $f_0^{-1}(\bH_{<0})\cong\set{1,\dots,d}\times \bH_{<0}$, where $\bH_{<0}=\set{z\,|\,\Im(z)<0}$; later, we will add a mild combinatorial assumption on this fixed trivialisation.
Through the homotopy $\rho$, we obtain for all $f\in\NMonPol_d(\cR)$ a trivialisation
\[
f^{-1}(\bH_{<0})\cong\set{1,\dots,d}\times \bH_{<0}.
\]
Taking all $f\in\NMonPol_d(\cR)$ at the same time, we obtain an embedding
\[
 \iota\colon \NMonPol_d(\cR)\times\set{1,\dots,d}\times\bH_{<0}\hookrightarrow \NMonPol_d(\cR)\times\C
\]
that is compatible with the projection onto $\NMonPol_d(\cR)$, and that satisfies the equality
$f(\pi_{\C}(\iota(f,i,z)))=z$ for all $f\in\NMonPol_d(\cR)$, $i\in\set{1,\dots,d}$ and $z\in\bH_{<0}$. Here $\pi_{\C}$ denotes the projection onto the factor $\C$.

Let $*=-\sqrt{-1}$ be the basepoint of $\CmP$ and of $\C\setminus\cR$. For $f\in\NMonPol_d(\cR)$, the $d$-fold cyclic covering $f\colon f^{-1}(\C\setminus\cR)\to\C\setminus\cR$ of the annulus $\C\setminus\cR$ is endowed with a trivialisation over $\bH_{<0}$, and in particular over $*$; we can thus compute the monodromy $\totmon(f)\in\fS_d$ as the action on $f^{-1}(*)\cong\set{1,\dots,d}$ induced by a based loop in $\C\setminus\cR$ spinning once clockwise around $\cR$. The monodromy $\totmon(f)\in\fS_d$ is a permutation consisting of a single cycle of length $d$, and is independent of $f\in\NMonPol_d(\cR)$; up to changing the above ``once and for all fixed'' trivialisation $f_0^{-1}(\bH_{<0})\cong\set{1,\dots,d}\times \bH_{<0}$, we can assume $\totmon(f)=\lc_d$ for all $f\in\NMonPol_d(\cR)$.

More generally, the covering $f\colon f^{-1}(\CmP)\to\CmP$ gives rise to an action
of the group $\pi_1(\CmP,*)$ on the set $f^{-1}(*)\cong\set{1,\dots,d}$. We thus obtain a map of groups $\phi\colon\pi_1(\CmP,*)\to\fS_d$, which is the $\fS_d$-valued monodromy associated with $f$. Restricting to the fundamental PMQ $\fQ(P)$, and interpreting elements of $\fS_d$ as the corresponding elements of the PMQ $\fS_d\geo$, we obtain a map of PMQs $\psi\colon\fQ(P)\to\fS_d\geo$, which is the $\fS_d\geo$-valued monodromy of $f$. We thus obtain a configuration $(P,\psi)\in\Hur(\cR;\fS_d\geo)$.
By construction, if $f\in\NMonPol_d(\mcR)$, then the set $P$ of critical values of $f$ is contained in $\mcR$, so that $(P,\psi)\in\Hur(\mcR;\fS_d\geo)$.

Our next goal is to show the following two statements:
\begin{itemize}
 \item the $\widehat{\fS_d\geo}$-valued total monodromy $\hat\totmon(P,\psi)$ is precisely $\hat\lc_d$, i.e. the image of $\lc_d\in\fS_d\geo$ under the inclusion $\fS_d\geo\hookrightarrow\widehat{\fS_d\geo}$;
 \item the map $\psi$ sends non-trivial loops of $\fQ(P)$ to elements of $(\fS_d\geo)_+$.
\end{itemize}
Let $|P|=k$ and write $P=\set{z_1,\dots,z_k}$. Let $\alpha_1,\dots,\alpha_k\colon[0,1]\to\CmP$ be loops based at $*$ representing an admissible generating set for $\pi_1(\CmP,*)$: the loops are embedded and disjoint away from the endpoints, and $\alpha_i$ spins once clockwise around $z_i$. Let $D_i\subset\C$ be the open disc bounded by $\alpha_i$. Let $\sigma_i=\phi(\alpha_i)\in\fS_d$, and let $c_{i,1},\dots,c_{i,\lambda_i}$ be the cycles in the cycle decomposition of $\sigma_i$, where $\lambda_i= d-N(\sigma_i)$ by \cite[Lemma 7.3]{Bianchi:Hur1}. Then $f^{-1}(D_i)$ is the union of $\lambda_i$ open discs in $\C$, which are in canonical bijection with the cycles $c_{i,1},\dots,c_{i,\lambda_i}$; moreover the disc corresponding to $c_{i,j}$ covers $D_i$ as a cyclic branched cover of degree $|c_{i,j}|$ with a unique branch point $\zeta_{i,j}$ over $z_i$. The point $\zeta_{i,j}\in\C$ is a zero of $f'$ of order $|c_{i,j}|-1=N(c_{i,j})$, and all zeroes of $f'$ arise in this way. It follows that $N(\sigma_i)=\sum_{j=1}^{\lambda_i}N(c_{i,j})$ is also equal to the number of zeroes of $f'$ lying over the critical value $z_i$ of $f$, counted with multiplicity. Since $z_i$ is assumed to be a critical value of $f$, we have $N(\sigma_i)\ge1$, i.e. $\sigma_i\in(\fS_d\geo)_+$. Moreover, summing over $1\le i\le k$, we obtain the equality $N(\sigma_1)+\dots+N(\sigma_k)=d-1$, which is the degree of the polynomial $f'$, but also the norm of $\lc_d$. Up to rearranging the indices from $1$ to $k$, we can assume that the product $\alpha_1\dots\alpha_k$ represents the same element of $\pi_1(\CmP,*)$ as an embedded loop spinning once clockwise around $\cR$. It follows that the product $\sigma_1\dots\sigma_k$ is defined in $\fS_d\geo$, and is equal to $\lc_d\in\fS_d\geo$ (a priori we only know that $\sigma_1\dots\sigma_k$ is an element of $\widehat{\fS_d\geo}$ mapping to the group element $\lc_d\in\fS_d$ under the natural map of PMQs $\widehat{\fS_d\geo}\to\fS_d$ having the group $\fS_d$ as target).
In fact a bit more is true, and will be used later: each partial product $\sigma_i\dots\sigma_{i'}$, for $i<i'$, is defined in $\fS_d\geo$.
\begin{defn}
 \label{defn:tildecv}
 The above assignment $f\mapsto(P,\psi)$ gives a map of sets
 \[
 \check\cv^{\cR}\colon\NMonPol_d(\cR)\to\Hur(\cR;(\fS_d\geo)_+)_{\hat\lc_d}.
 \]
\end{defn}

\begin{proof}[Proof of Proposition \ref{prop:tildecv}]
We first prove continuity of $\check\cv^{\cR}$. Let $f\in\NMonPol_d(\cR)$ and denote $\check\cv^{\cR}(f)=(P,\psi)\in \Hur(\cR;(\fS_d\geo)_+)_{\hat\lc_d}$; we write $P=\set{z_1,\dots,z_k}$ as above, and fix loops $\alpha_1,\dots,\alpha_k$ as above. Let $U_1,\dots,U_k$ be disjoint, convex open sets contained in $\C\setminus\pa{\bigcup_{i=1}^k\alpha_i}$, such that $U_i$ intersects $P$ in $z_i$; the sets $U_i$ give an \emph{adapted covering} $\uU=(U_1,\dots,U_k)$ of $P$, as in \cite[Definition 2.5]{Bianchi:Hur2}. Let $V\subseteq\NMonPol_d(\cR)$ be a connected neighbourhood of $f$ such that $\cv(V)\subseteq\SP^{d-1}(\uU\cap\cR)\subset\SP^{d-1}(\cR)$: such a neighbourhood exists because $\cv$ is continuous. Let $\bar f\in V$ and denote $(\bar P,\bar\psi)=\check\cv^{\cR}(\bar f)$.
Note that for all $1\le i\le k$ the map $\bar\psi\colon\fQ(\bar P)\to\fS_d\geo$ can be extended over $\alpha_i\in\pi_1(\C\setminus \bar P,*)$ in the sense of \cite[Definition 2.13]{Bianchi:Hur2}: we can factor $\alpha_i$ as a product of admissible generators $\bar\alpha_{i,1}\dots\bar\alpha_{i,k_i}$ and use the remark before Definition \ref{defn:tildecv}.
Then $\bar P\subset\uU$ by the condition $\cv(\bar f)\in \SP^{d-1}(\uU\cap\cR)$; moreover, for any path of polynomials $(f_s)_{0\le s\le 1}$ in $V$ joining $f$ with $\bar f$,
the $\fS_d\geo$-valued monodromy of $f_s$ along $\alpha_i$ is well-defined and locally constant in $s$, since the $\fS_d\geo$-valued monodromy is reflected by the $\fS_d$-valued monodromy, which is locally constant on the path. It follows that $\bar\psi(\alpha_i)=\psi(\alpha_i)$ as elements of $\fS_d\geo$. This means precisely that $(\bar P,\bar \psi)$ belongs to the normal neighbourhood $\fU(P,\psi;\uU)$ of $(P,\psi)$ in $\Hur(\cR;\fS_d\geo)_{\hat\lc_d}$, see Definition \cite[Definition 3.7]{Bianchi:Hur2}; we have thus proved that $\check\cv^{\cR}$ is continuous.

We next prove that $\check\cv^{\cR}$ is a bijection of sets. For this we will define a map of sets
$\bc^{\cR}\colon\Hur(\cR;\fS_d\geo)_{\lc_d}\to\NMonPol_d(\cR)$ which is both a left and right inverse to $\check\cv^{\cR}$. The notation ``$\bc$'' stands for ``branched cover''; the superscript ``$\cR$'' is to distinguish this map from a similar one used in Section \ref{sec:homeo}.

Let $(P,\psi)\in\Hur(\cR,(\fS_d\geo)_+)_{\hat\lc_d}$. The map of PMQs $\psi\colon\fQ(P)\to\fS_d\geo$ gives rise to a map of groups $\phi\colon\pi_1(\CmP,*)\to\fS_d$ using \cite[Theorem 3.3]{Bianchi:Hur1}; the action of $\fS_d$ on the set $\set{1,\dots,d}$ gives rise to a covering $f\colon \cF\to\CmP$ of degree $d$, with fibre $f^{-1}(*)$ canonically identified with the set $\set{1,\dots,d}$. The total space $\cF$ of the covering is connected, as the image of $\phi$ contains $\lc_d$ and thus acts transitively on $\set{1,\dots,d}$. The Euler characteristic of $\cF$ is computed as
\[
 \chi(\cF)=d\chi(\CmP)=d(1-k),
\]
using again the notation $P=\set{z_1,\dots,z_k}$, hence $|P|=k$. We can compactify $\cF$ to a smooth closed Riemann surface $\bar\cF$ endowed with a branched cover map $f\colon\bar\cF\to\C P^1$ by adjoining one point $Q$ lying over $\infty\in\C P^1$, and several other points, lying over the points of $P$. More precisely, fix loops $\alpha_1,\dots,\alpha_k$ as above, and use the same notation used above in the following: then for each $1\le i\le k$, the preimage $f^{-1}(D_i\setminus z_i)$ is a disjoint union of cyclic covers of the punctured disc $D_i\setminus z_i$, one of degree $|c_{i,j}|$ for every $1\le j\le \lambda_i=d- N(\sigma_i)$; we adjoin a point $\zeta_{i,j}$ to compactify each of these cyclic covers over $z_i$.
The Euler characteristic of the resulting surface is $\chi(\cF)$ plus the number of adjoined points, i.e.
\[
 \chi(\bar\cF)=d\chi(\CmP)+1+\sum_{i=1}^k (d-N(\sigma_i))=d-kd+1+kd-{d-1}=2.
\]
This implies that $\bar\cF$ is biholomorphic to the Riemann sphere; we note that $\infty\in \C P^1$ has a unique preimage $Q\in\bar\cF$. Moreover the meromorphic map
$f\colon\bar\cF\to\C P^1$ restricts to a trivial $d$-fold cover over $\bH_{<0}\subset\C P^1$, and we can uniquely extend the trivialisation $f^{-1}(*)\cong\set{1,\dots,d}$ to a trivialisation $f^{-1}(\bH_{<0})\cong\set{1,\dots,d}\times\bH_{<0}$.
We let then $X\in T_Q\bar\cF$ be the unique non-zero vector making $Q$ into a directed pole of $f$ of order $d$, and such that a germ of path $L$ exiting from $Q$ with velocity $X$ lies in $\set{1}\times\bH_{<0}\subset\C P^1$ for small positive times.

We obtain therefore a surface $(\bar\cF,Q,X)$ of type $\Sigma_{0,1}$, that can be identified with $\C P^1$; the map $f\colon\bar\cF\to\C P^1$ corresponds to the map $f\colon \C P^1\to\C P^1$ represented by a polynomial $f(z)$ of degree $d$. By construction, the polynomial $f(z)$ is monic, i.e. it belongs to $\MonPol_d$, and its projection
to $\NMonPol_d$ is independent of the identification $(\bar\cF,Q,X)\cong\C P^1$. Moreover the critical values of $f$ are all points of $P$ (all of them, because $\psi$ is an augmented map of PMQs, hence each $\sigma_i$ is not the identity permutation), and in particular lie in $\cR$.
The above assignment $(P,\psi)\mapsto f$ gives a map of sets
 \[
 \bc^{\cR}\colon\to\Hur(\cR;(\fS_d\geo)_+)_{\hat\lc_d}\to\NMonPol_d(\cR).
 \]
The fact that $\check\cv^{\cR}$ and $\bc^{\cR}$ are inverse maps of sets is straightforawrd.
\end{proof}

\subsection{The generic case \texorpdfstring{$\sigma\in\fS_d\geo$}{sigma in Sdgeo}}
\label{subsec:hNs}
Let $\sigma\in\fS_d\geo$ be generic, and let $\sigma=c_1\dots c_{\lambda}$ be the cycle decomposition of $\sigma$, with $\lambda=d-N(\sigma)$. Let $\fS_{c_i}$ be the symmetric group on the set $c_i\subset\set{1,\dots,d}$. There is an inclusion of PMQs (see \cite[Definition 2.16]{Bianchi:Hur1} for the product of PMQs)
\[
 \mu\colon \fS_{\underline{c}}\geo:=\fS_{c_1}\geo\times\dots\times\fS_{c_\lambda}\geo\hookrightarrow\fS_d\geo
\]
with image those permutations $\tau\in\fS_d\geo$ such that each cycle of $\tau$, considered as a subset of $\set{1,\dots,d}$, is contained in a single cycle $c_i$. By \cite[Corollary 7.5]{Bianchi:Hur1}, the image of $\mu$ is closed under factorisations in $\fS_d\geo$, or in other words, the complement
$\fS_d\geo\setminus\mu\pa{\fS_{\underline{c}}\geo}$ is an ideal
of $\fS_d\geo$ (see \cite[Definition 2.20]{Bianchi:Hur1} for the notion of ideal). Moreover, the image of $\mu$ contains $\sigma$.

It follows that $\Hur(\mcR;(\fS_d\geo)_+)_{\hat\sigma}$ is homeomorphic to $\Hur(\mcR;(\fS_{\underline{c}}\geo)_+)_{\hat\sigma}$.
Moreover $\Hur(\mcR;(\fS_{\underline{c}}\geo)_+)_{\hat\sigma}$ is homeomorphic to the product of spaces
\[
 \Hur(\mcR;(\fS_{\underline{c}}\geo)_+)_{\hat\sigma}\cong \prod_{i=1}^\lambda \Hur(\mcR;(\fS_{c_i}\geo)_+)_{\hat c_i}
\]
as can be directly seen using continuity of the external product \cite[Definition 5.7]{Bianchi:Hur2} and functoriality of Hurwitz-Ran spaces in the PMQ \cite[Subsection 4.1]{Bianchi:Hur2}.

Each space $\Hur(\mcR;(\fS_{c_i}\geo)_+)_{\hat c_i}$ is a manifold, since $c_i$ is a \emph{long cycle} (a permutation with a single cycle) in $\fS_{c_i}$; we conclude that the product of manifolds
$\Hur(\mcR;(\fS_d\geo)_+)_{\hat\sigma}$ is again a manifold.

\section{Homeomorphism between \texorpdfstring{$\ccOud$}{Ogn[d]} and simplicial Hurwitz spaces}
\label{sec:homeo}
In this section we prove Theorem \ref{thm:main3}, which is the third main result of the article and establishes a connection between moduli spaces $\ccOud$ and simplicial Hurwitz spaces $\Hur^{\Delta}(\fS_d\geo)$.
Some of the arguments of this section are similar to the ones from Section \ref{sec:Poincare}.
We fix $g\ge0$, $n\ge1$ and $\ud$ as in the previous sections; we further assume $d=\sum_{i=1}^nd_i\ge2$ throughout this section. We still denote $h=2g+n+d-2$.

\begin{defn}
 \label{defn:klud}
For $1\le i\le n$ we set $\bl_i=1+\sum_{j=1}^{i-1}d_j$; in particular $\bl_1=1<\bl_2<\dots<\bl_n$. We let $\klud\in\fS_d$ be the permutation with cycle decomposition
\[
 \klud=(\bl_1,\dots,\bl_2-1)(\bl_2,\dots,\bl_3-1),\dots,(\bl_n,\dots,d).
\]
Note that $\klud$ is a permutation on $n$ cycles of lengths $d_1,\dots,d_n$. We consider $\klud$ also as an element of $\fS_d\geo$.

We denote by $\widehat{\klud}$ the image of $\klud\in\fS_d\geo$ under the inclusion
$\fS_d\geo\hookrightarrow\widehat{\fS_d\geo}$, and for a transposition $\tr=(i,j)\in\fS_d$, similarly, we denote by $\hat\tr=\widehat{(i,j)}$ the corresponding element in 
$\widehat{\fS_d\geo}$. For $2\le i\le n$, we denote by $\tr^{\ud}_i$ the transposition $(\bl_i-1,\bl_i)$. We define $\klud_g\in\widehat{\fS_d\geo}$ as the following product, in which $\widehat{(1,2)}$ is repeated $2g$ times at the beginning, and each $\hat\tr^{\ud}_i$ is repeated twice at the end:
\[
 \klud_g=\widehat{(1,2)}\cdot\dots\cdot\widehat{(1,2)}\cdot\widehat{\klud}
 \cdot\hat\tr^{\ud}_2\cdot\hat\tr^{\ud}_2\cdot\hat\tr^{\ud}_3\cdot\hat\tr^{\ud}_3\cdot\dots\cdot\hat\tr^{\ud}_n\cdot\hat\tr^{\ud}_n.
\]
\end{defn}
Note that $N(\klud_g)=h$, where we consider the extension $N\colon\widehat{\fS_d\geo}\to \N$ of the norm $N\colon\fS_d\geo\to\N$.
Using the notation of \cite[Proposition 7.13]{Bianchi:Hur1}, we can write
\[
 \klud_g=(\klud;\set{1,\dots,d};h),
\]
where ``$\set{1,\dots,d}$'' denotes the trivial partition of the set $\set{1,\dots,d}$,
and ``$h$'' denotes a splitting of $h=N(\klud_g)$ into several summands, one for each
piece of the partition (in this case a single summand).

\begin{thm}
\label{thm:main3}
There is a homeomorphism
\[
 \bc\colon\Hur^{\Delta}(\fS_d\geo)(\klud_g)\overset{\cong}{\to}\ccOud.
\]
\end{thm}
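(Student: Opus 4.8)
The plan is to follow the template of Section~\ref{sec:Poincare}: I would construct $\bc$ by the branched-cover recipe, build a set-theoretic inverse $\cv$ out of critical values and monodromy, and then promote the resulting continuous bijection to a homeomorphism. The essential new feature compared with Subsection~\ref{subsec:monic} is that the total space now has positive genus and several ramification points over $\infty$; the essential new difficulty is that neither $\Hur^{\Delta}(\fS_d\geo)(\klud_g)$ nor $\ccOud$ is compact, so the compact-Hausdorff argument closing Proposition~\ref{prop:tildecv} is unavailable and must be replaced by invariance of domain. Throughout I identify $\mcR\cong\C$, so that $\Hur(\mcR,(\fS_d\geo)_+)_{\klud_g}\cong\Hur^{\Delta}(\fS_d\geo)(\klud_g)$ parametrises configurations whose branch locus is an arbitrary finite subset of $\C$.

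To define $\bc$, start from a configuration $(P,\psi)$ and pass to the monodromy homomorphism $\phi\colon\pi_1(\CmP,*)\to\fS_d$ via \cite[Theorem~3.3]{Bianchi:Hur1}. This yields a $d$-fold covering $\cF\to\CmP$, connected because the trivial partition appearing in $\klud_g=(\klud;\set{1,\dots,d};h)$ forces the monodromy group to be transitive. Compactifying $\cF$ over the points of $P$ and over $\infty$ produces a closed Riemann surface $\bar\cF$ with a degree-$d$ branched cover $f\colon\bar\cF\to\C P^1$, and the whole point is the Euler-characteristic bookkeeping: the ramification over $P$ totals $\sum_i N(\sigma_i)=N(\klud_g)=h$, while $\klud$, being of cycle type $(d_1,\dots,d_n)$, contributes exactly $n$ points $Q_1,\dots,Q_n$ over $\infty$, of local degrees $d_1,\dots,d_n$ and ramification $\sum_i(d_i-1)=d-n$. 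Riemann--Hurwitz then gives $\chi(\bar\cF)=2d-h-(d-n)=2-2g$, so $\bar\cF$ has genus $g$ and $f$ has a pole of order $d_i$ at $Q_i$. Transporting the fixed trivialisation over $\bH_{<0}$ exactly as in Subsection~\ref{subsec:monic}, and matching the cycle $c_i=(\bl_i,\dots,\bl_{i+1}-1)$ with the label $i$ so that the germ $L_{i,0}$ runs in sheet $\bl_i$, pins down a tangent vector $X_i$ normalising the leading Laurent coefficient of $f$ at $Q_i$ to $1$. Thus $f$ is $\ud$-directed and $(\bar\cF,\uQ,\uX,f)$ represents a well-defined class $\bc(P,\psi)\in\ccOud$.

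For the inverse I would send $[\fr,f]\in\ccOud$ to the pair $(P,\psi)$ consisting of the set $P\subset\C$ of finite critical values of $f$ and the $\fS_d\geo$-valued monodromy of the branched cover $f\colon(\sgn,\fr)\to\C P^1$; the count above read backwards, together with connectivity of $\sgn$ and the pole orders $d_i$, shows that the total monodromy is $\klud_g$, so $\cv$ lands in the correct component. That $\bc$ and $\cv$ are mutually inverse is then the same routine check as for $\check\cv^{\cR}$ and $\bc^{\cR}$ in Proposition~\ref{prop:tildecv}. Only the continuity of $\bc$ remains, and here I would argue through the universal property of $\pr_{\ccO}$ from Subsection~\ref{subsec:classifyingspaces}: it suffices to produce, over $\Hur(\mcR,(\fS_d\geo)_+)_{\klud_g}$, a continuous family of Riemann surfaces carrying $\ud$-directed meromorphic functions, namely the tautological family of compactified branched covers $\bar\cF$. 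Over a normal neighbourhood $\fU(P,\psi;\uU)$ relative to an adapted covering $\uU$ (\cite[Definitions~2.5 and~3.7]{Bianchi:Hur2}) the sheets are trivialised away from $\uU$, the branch points vary continuously with the configuration, the complex structures pulled back from $\C P^1$ therefore move continuously in $\Riem(\sgn)$, and colliding branch points are absorbed precisely by the gluing built into the Hurwitz--Ran topology---this is the exact analogue, in positive genus, of the continuity half of Proposition~\ref{prop:tildecv}, which I expect to be the main obstacle.

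Finally I would close by invariance of domain. By Theorem~\ref{thm:main1} the space $\ccOud$ is a manifold without boundary of real dimension $2h$; by Theorem~\ref{thm:main2} together with $\dim\Hur^{\Delta}(\fS_d\geo)(\klud_g)=2N(\klud_g)=2h$, the source is also a boundaryless $2h$-manifold. A continuous injection between manifolds of the same finite dimension is an open map, so the continuous bijection $\bc$ is automatically a homeomorphism. In particular only the continuity of $\bc$, and not that of its inverse, needs to be established, which is why the construction of the tautological family of branched covers is the crux of the argument.
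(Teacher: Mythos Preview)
Your construction of $\bc$ and its set-theoretic inverse $\check\cv$ is correct and matches the paper's, as does your outline for the continuity of $\bc$: the paper likewise builds the tautological family of compactified branched covers over normal neighbourhoods $\fU(P,\psi;\uU)$, using the $\NMonPol$ charts from Subsection~\ref{subsec:monic} to handle the discs over each $U_i$ (this is carried out in detail in the appendix).

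The gap is in the closing invariance-of-domain step. You invoke Theorem~\ref{thm:main1} to assert that $\ccOud$ is a $2h$-manifold, but that theorem carries the hypothesis $d\ge 2g+n-1$. Theorem~\ref{thm:main3} is stated, and is needed, for \emph{all} $d\ge2$: for instance, the main diagram in Section~\ref{sec:mumford} applies $\bc$ to $\ccO_{g+1,1}[2g]$, where $d=2g<2(g+1)=2g'+n-1$. In that regime Riemann--Roch no longer forces $\dim_\C\cO(\fr,D-\uQ)$ to be constant in $\fr$, the fibres $\ccO(\fr,\ud)$ may jump in dimension, and there is no a priori reason for $\ccOud$ to be a manifold---so invariance of domain is unavailable. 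Your argument therefore proves Theorem~\ref{thm:main3} only under the extra hypothesis $d\ge 2g+n-1$.

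The paper circumvents this by proving continuity of $\check\cv$ directly (also in the appendix), so that $\bc$ is a homeomorphism simply as a continuous bijection with continuous inverse, with no appeal to any manifold structure on $\ccOud$. The manifold structure of $\ccOud$ for small $d$ is then a \emph{consequence} of Theorem~\ref{thm:main3} together with Theorem~\ref{thm:main2}, not an input. If you wish to salvage the invariance-of-domain shortcut in full generality, you would need an independent argument that $\ccOud$ is always a $2h$-manifold; short of that, proving continuity of $\check\cv$ is unavoidable.
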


In the case $g=0$ and $n=1$, Theorem \ref{thm:main3} reduces to the statement that $\Hur^{\Delta}(\fS_d\geo)(\hat\lc_d)$ is homeomorphic to $\NMonPol_d\cong\C^{d-1}$; this follows directly from Proposition \ref{prop:tildecv}, implying that 
$\Hur^{\Delta}(\fS_d\geo)(\hat\lc_d)$ is homeomorphic to the open subsset $\NMonPol_d(\mcR)$ of $\NMonPol_d$: since $\NMonPol_d(\mcR)$ is the interior of a contractible, semi-algebraic $2(d-1)$-dimensional submanifold with boundary
of $\NMonPol_d$, namely the interior of $\NMonPol_d(\cR)$, we conclude that $\NMonPol_d(\mcR)$ is homeomorphic to an open $2(d-1)$-dimensional ball, i.e. to $\C^{d-1}$. Therefore, from now on we will focus on the case $g>0$ or $n>1$.

To prove Theorem \ref{thm:main3} we will construct the map $\bc$ and another map
\[
\check\cv\colon\ccOud\to\Hur^{\Delta}(\fS_d\geo)(\klud_g),
\]
such that $\check\cv$ and $\bc$ are inverse bijections, and prove continuity of the two maps. Roughly, $\bc$ is the map given by constructing a \emph{branched cover}, whereas $\check\cv$ is the map given by taking \emph{critical values} (a finite configurations in $\C$) and retaining also the monodromy information.

It will in fact be convenient to replace $\Hur^{\Delta}(\fS_d\geo)(\klud_g)$ by the homeomorphic space $\Hur(\mcR,(\fS_d\geo)_+)_{\klud_g}$, using \cite[Theorem 9.1]{Bianchi:Hur2}, and to replace further
$\Hur(\mcR,(\fS_d\geo)_+)_{\klud_g}$ with a homeomorphic space $\Hur(\C,(\fS_d\geo)_+)_{\klud_g}$ that we will introduce in Subsection \ref{subsec:HurC}; we will then construct the maps $\bc$ and $\check\cv$ directly between $\ccOud$
and the new space $\Hur(\C,(\fS_d\geo)_+)_{\klud_g}$.

\subsection{Hurwitz-Ran spaces with \texorpdfstring{$\C$}{C} as background}
\label{subsec:HurC}
Let $\Q$ be a PMQ throughout the subsection.
For each subspace $\cX\subset\bH$ of the closed upper half plane,
a \emph{Hurwitz-Ran space} $\Hur(\cX,\Q)$ was introduced in \cite{Bianchi:Hur2}; a point in 
$\Hur(\cX,\Q)$ can be described as a couple $(P,\psi)$, where $P\subset\cX$ is a finite subset, and
$\psi\colon\fQ(P)\to\Q$ is a map of PMQs. The fundamental PMQ $\fQ(P)$ is defined as a subset of the fundamental group $\pi_1(\CmP,*)$, which in turn is defined after choosing a basepoint $*\in\CmP$. The choice $*=-\sqrt{-1}$, together with the hypothesis $\cX\subset\bH$, ensures that $*\not\in P$ for all $P$, and gives thus a ``constant'' choice of basepoint for the spaces $\CmP$, for $P$ varying among finite subsets of $\cX$.

\begin{defn}
For $\cK\subset\C$ with compact closure we define the point $*_{\cK}$ as
\[
 *_{\cK}:=\pa{\mathrm{inf}\set{\Im(z)\,|\, z\in \cK\cup\set{0}}-1}\cdot\sqrt{-1}\in\C\setminus\cK.
\]
\end{defn}
If $\cK\subset\bH$, then $*_{\cK}=-\sqrt{-1}$. We can now generalise the definition of the fundamental PMQ $\fQ(P)$ to all finite subsets $P\subset\C$; in the next definition we treat the more general case of a finite union of disjoint convex subsets of $\C$.
\begin{defn}
\label{defn:fQP}
Let $\cK$ be a finite union of convex subsets of $\C$ with compact and disjoint closures. We denote by $\fQ(\cK)\subset\pi_1(\C\setminus\cK,*_{\cK})$ the union of conjugacy classes corresponding to simple closed curves in $\C\setminus\cK$ spinning clockwise around at most one component of $\cK$, and by $\fQext(\cK)$ the union of all conjugacy classes corresponding to simple closed curves in $\C\setminus\cK$ spinning clockwise. Both $\fQ(\cK)$ and $\fQext(\cK)$ inherit a PMQ structure from $\pi_1(\C\setminus\cK,*_{\cK})$ as in \cite[Definition 2.8]{Bianchi:Hur1}.
\end{defn}
We note that $\fQ(\cK)\subseteq\fQext(\cK)$ are augmented PMQs
as can be easily checked by considering the abelianisation of $\pi_1(\C\setminus\cK,*_{\cK})$.
See the proof of \cite[Lemma 2.16]{Bianchi:Hur2} for an analogous argument.
\begin{defn}
 \label{defn:HurC}
An element of the space $\Hur(\C,\Q)$ has the form $(P,\psi)$, where $P\subset\C$ is finite and $\psi\colon\fQ(P)\to\Q$ is a map of PMQs.

Given $(P,\psi)\in\Hur(\C,\Q)$, write $P=\set{z_1,\dots,z_k}$ and let $\uU=(U_1,\dots,U_k)$
be a collection of disjoint, covex open sets with compact, disjoint closures, such that $z_i\in U_i$ (an \emph{adapted covering}). For any finite set $P'\subset\uU$ with $P'$ intersecting all components of $\uU$,
translating basepoints along straight segments in $\C$ gives a natural, injective group homomorphism
$\pi_1(\C\setminus\uU,*_{\uU})\hookrightarrow\pi_1(\CmP',*_{P'})$ restricting to an injection
$\fQ(\uU)\hookrightarrow\fQext(P')$; in the special case $P'=P$ we get in fact identifications $\pi_1(\C\setminus\uU,*_{\uU})\cong\pi_1(\CmP,*_{P})$ and $\fQ(\uU)\cong\fQ(P)$.

We define the \emph{normal neighbourhood} $\fU(P,\psi;\uU)\subset\Hur(\C,\Q)$ as the subset of configurations $(P',\psi')$ such that $P'\subset \uU$, $P'$ intersects all components of $\uU$, $\psi'$ can be extended to a sub-PMQ of $\fQext(P')$ containing $\fQ(\uU)$, and the restriction of $(\psi')^{\mathrm{ext}}$ on $\fQ(\uU)\cong\fQ(P)$ coincides with $\psi$.

For an element $a$ in the completion $\hat\Q$ of $\Q$, we denote by $\Hur(\C,\Q)_a$ the closed subspace of configurations $(P,\psi)$ such that the natural extension $\fQext(P)\to\hat\Q$ of $\psi$ sends the class of a simple loop $\gamma$ spinning clockwise around $P$ to the element $a$: we say that $a$ is the $\hat\Q$-valued total monodromy of $(P,\psi)$, and write $a=\hat\totmon(P,\psi)$.

If $\Q$ is augmented, we denote by $\Hur(\C;\Q_+)\subset\Hur(\C,\Q)$ the subspace of configurations $(P,\psi)$ such that $\psi\colon\fQ(P)\to\Q$ is an \emph{augmented} map of augmented PMQs,
i.e. the preimage of $\one\in\Q$ is $\set{\one}\subseteq\fQ(P)$.
\end{defn}
As in \cite[Sections 2 and 3]{Bianchi:Hur2}, the sets $\fU(P,\psi;\uU)$ define a Hausdorff topology on $\Hur(\C,\Q)$.
Our next goal is to identify the spaces
$\Hur(\mcR;\Q)$ and $\Hur(\C,\Q)$. Let $\xi\colon\C=\R^2\overset{\cong}{\to}\mcR=(0,1)^2$ be the homeomorphism given by
\[
\xi(x,y)=(e^x/(e^x+1),e^y/(e^y+1)).
\]
Given $(P,\psi)\in\Hur(\C,\Q)$, let $P'=\xi(P)\subset\mcR$. The fundamental groups $\pi_1(\CmP',*)$ and $\pi_1(\CmP',\xi(*_P))$ can be identified by translating $*=-\sqrt{-1}$ to $\xi(*_P)$ along a straight segment in $\CmP'$. Thus $\xi$ yields an identification of groups
\[
\pi_1(\CmP,*_P)\cong \pi_1(\mcR\setminus P',\xi(*_P))\cong  \pi_1(\CmP',\xi(*_P))\cong
\pi_1(\CmP',*)
\]
and this identification restricts to an identification $\fQ(P)\cong\fQ(P')$. We can then define $\psi'\colon \fQ(P')\to\Q$ as the map of PMQs corresponding to $\psi$. The assignment $\xi_*\colon(P,\psi)\mapsto(P',\psi')$ is a bijection between $\Hur(\C,\Q)$ and $\Hur(\mcR,\Q)$. Moreover $\xi_*$ induces a bijection between the bases of the two topologies given by normal neighbourhoods corresponding to coverings $\uU$ consisting of disjoint open rectangles compactly contained in $\C$ and in $\mcR$, respectively. Hence $\xi_*\colon \Hur(\C,\Q)\to\Hur(\mcR,\Q)$ is a homeomorphism.
Finally, $\xi_*$ respects the $\hat\Q$-valued total monodromy; and when $\Q$ is augmented, $\xi_*$ restricts to a homeomorphism $\Hur(\C,\Q_+)\to\Hur(\mcR,\Q_+)$.

In the rest of the section we identify the spaces $\Hur(\C,\fS_d\geo)$ and
$\Hur(\mcR,\fS_d\geo)$ and their corresponding pairs of subspaces without further mention.

\subsection{The map of sets \texorpdfstring{$\ccv$}{tildecv}}
\label{subsec:tildecv}
Denote again $h=2g+n+d-2$. The map of PMQs $N\colon\fS_d\geo\to \N$ given by the norm is augmented, and can be extended to an augmented map of complete PMQs $N\colon\widehat{\fS_d\geo}\to\N$. The element $\klud_g$ considered in the statement
of Theorem \ref{thm:main3} has norm precisely $h$, i.e. $N(\klud_g)=h$.

Similarly as in Subsection \ref{subsec:monic}, we first consider a map of sets
\[
\cv\colon\ccOud\to\SP^h(\C)\cong\Hur(\C,\N_+)_h,
\]
and later construct $\check\cv$ as a lift of $\cv$ along
the map $N_*\colon\Hur(\C,(\fS_d\geo)_+)_{\klud_g}\to \Hur(\C,\N_+)_h$. In this subsection we only define $\cv$ and $\check\cv$ as maps of sets; we will deal in Subsections \ref{subsec:ccvcontinuous} and \ref{subsec:bccontinuous} of the appendix with their continuity.

The map $\cv$ sends an element $[\fr,f]\in\ccOud$ to the collection of critical values of $f$ lying in $\C$, counted with multiplicity. Concretely, recall that $[\fr,f]\in\ccOud$ is the class of a pair $(\fr,f)$, where $\fr$ is a Riemann structure on $\Sigma_{g,n}$ and $f\colon (\sgn,\fr)\to\C P^1$ is an $\ud$-directed meromorphic function. Denote by $P=\set{z_1,\dots,z_k}$ the set of critical values of $f$ lying in $\C$, and for each $1\le i\le k$ let $\zeta_{i,1},\dots,\zeta_{i,\lambda_i}$ be the points in the fibre $f^{-1}(z_i)\subset\Sigma_{g,n}$; denote by $\uzeta$ the set $\set{\zeta_{i,j}\,|\, 1\le i\le k,1\le j\le \lambda_i}$. Since
$f\colon\Sigma_{g,n}\setminus\pa{\uQ\cup\uzeta}$ is a genuine $d$-fold cover of $\CmP$, we have
\[
 2-2g-n-\sum_{i=1}^k\lambda_i=\chi(\Sigma_{g,n}\setminus\pa{\uQ\cup\uzeta})=d\chi(\CmP)=d(1-k),
\]
implying the equality
\[
 \sum_{i=1}^k(d-\lambda_i)=2g+n+d-2=h.
\]
Thus the formal sum $\sum_{i=1}^k(d-\lambda_i)\cdot z_i$ represents an element in $\SP^h(\C)$. If we change representative of $[\fr,f]\in\ccOud$, we obtain the same element in $\SP^h(\C)$.
\begin{defn}
 We define a map of sets $\cv\colon \ccOud\to \SP^h(\C)$ by the assignment $[\fr,f]\mapsto \sum_{i=1}^k(d-\lambda_i)\cdot z_i$.
\end{defn}
We can say a bit more: let $\alpha\subset\CmP$ be a simple closed curve bounding a disc $D\subset\C$; then we can repeat the above argument and compute the Euler characteristic of $f^{-1}(D)$, which is a subsurface of $\Sigma_{g,n}$:
\[
 \chi(f^{-1}(D))=\chi(f^{-1}(D\setminus\uzeta))+\sum_{z_i\in D\cap P}\lambda_i=d-\sum_{z_i\in D\cap P}(d-\lambda_i).
\]
Next, for fixed $[\fr,f]\in\ccOud$ represented by $(\fr,f)$, and using the notation above, consider the closed segment $I_P=\set{*_P+t\,|\, t\in\R_{\ge0}}\cup\set{\infty}\subset\C P^1$, i.e. the closure of the horizontal line in $\C$ starting at $*_P$ and running towards right. Note that $I_P$ is disjoint from $P$. The preimage $f^{-1}(I_P)$ is a union of $d$ segments $I_{P,1},\dots,I_{P,d}\subset\sgn$, where for all $1\le i\le n$ and $0\le j\le d_i-1$ the segment $I_{\bl_i+j}$ has an endpoint at $Q_i$ and is tangent to $e^{2\pi j\sqrt{-1}/d_i}X_i$; compare with Figure \ref{fig:moduli_3}.

We obtain a trivialisation $f^{-1}(*_P)\cong\set{1,\dots,d}$ by declaring the endpoint of $I_i$ lying over $*_P$ to be the $i$\textsuperscript{th} point of $f^{-1}(*_P)$. In fact, using that $f$ restricts to a trivial $d$-fold covering over the lower half-plane $\bH_{\le\Im(*_P)}=\set{z\,|\,\Im(z)\le\Im(*_P)}\subset\C$, we also obtain a trivialisation
$f^{-1}(\bH_{\le\Im(*_P)})\cong \set{1,\dots,d}\times \bH_{\le\Im(*_P)}$.

We can now consider the map of groups $\phi\colon\pi_1(\CmP,*_P)\to\fS_d$ given by the monodromy of the $d$-fold cover $f^{-1}(\CmP)\overset{f}{\to}\CmP$, with trivialised fibre over $*_P$. Restricting $\phi$ over
$\fQ(P)$, and interpreting permutations as elements of $\fS_d\geo$, we obtain a map of PMQs
$\psi\colon\fQ(P)\to\fS_d\geo$: here we also use \cite[Theorem 3.3]{Bianchi:Hur1}, i.e. that $\fQ(P)$ is a free PMQ on $k$ generators. The couple $(P,\psi)$ is an element of $\Hur(\C,\fS_d\geo)$, and it is independent of the representative of $[\fr,f]\in\ccOud$.
\begin{defn}
\label{defn:tcv}
  We define a map of sets $\check\cv\colon \ccOud\to \Hur(\C,\fS_d\geo)$ by the above assignment $[\fr,f]\mapsto (P,\psi)$.
\end{defn}
We note that if $\alpha_i$ is a simple loop in $\CmP$ bounding a disc $D_i\subset\C$ with $D_i\cap P=\set{z_i}$, then $\psi(\alpha_i)$ is a permutation with precisely $\lambda_i$ cycles, one for each preimage $\zeta_{i,j}$ of $z_i$ along $f$. In particular, since $z_i$ is assumed to be a critical value of $f$, we have $\lambda_i<d$ and thus $N(\psi(\alpha_i))>0$. This implies $\psi(\alpha_i)\in(\fS_d\geo)_+$, and therefore
$\psi\colon\fQ(P)\to\fS_d\geo$ is an augmented map of PMQs. Moreover we have
\[
\chi(f^{-1}(D_i))=d-(d-\lambda_i)=d-N(\psi(\alpha_i)).
\]
Given now a simple loop $\alpha$ representing a class in $\fQext(P)$, we can also consider the natural extension $\hat\psi\colon\fQext(P)\to\widehat{\fS_d\geo}$ and evaluate $\hat\psi(\alpha)$: for instance, up to renumbering the points of $P$, we can find an admissible generating set $\alpha_1,\dots,\alpha_k$ of $\pi_1(\CmP,*_P)$ such that $\alpha$ decomposes as product $\alpha_i\dots\alpha_j$, for some $1\le i,j\le k$ with $i\le j+1$ (to allow an empty product), and such that the disc $D$ bounded by $\alpha$ contains the discs $D_i,\dots,D_j$ bounded by $\alpha_i,\dots,\alpha_j$; then we have
\[
 \hat\psi(\alpha)=\widehat{\psi(\alpha_i)}\dots\widehat{\psi(\alpha_j)}\in\widehat{\fS_d\geo},
\]
implying
\[
 N(\hat\psi(\alpha))=N(\psi(\alpha_i))+\dots+N(\psi(\alpha_j));
\]
on the other hand we have
\[
\begin{split}
 \chi(f^{-1}(D)) & =d\chi(f^{-1}(D\setminus\set{z_i,\dots,z_j})+\lambda_i+\dots+\lambda_j\\
 &=d-\pa{N(\psi(\alpha_i))+\dots+N(\psi(\alpha_j))}=d- N(\hat\psi(\alpha)).
\end{split}
\]
This holds in particular in the case in which $\alpha$ is a simple loop in $\CmP$ spinning clockwise around all points of $P$: in this case $\hat\psi(\alpha)=\hat\totmon(P,\psi)$, the $\widehat{\fS_d\geo}$-valued total monodromy of $(P,\psi)$, and we obtain the equality $2-2g-n=d-N(\hat\psi(\alpha))$, i.e. $N(\hat\totmon(P,\psi))=h$.
Moreover the permutations $\psi(\alpha_1),\dots,\psi(\alpha_k)$ must generate a subgroup of $\fS_d$ acting transitively on the set $\set{1,\dots,d}$, as the space $f^{-1}(\CmP)=\sgn\setminus\pa{\uQ\cup\uzeta}$ is connected. Finally, the identity of sets $\fS_d\geo\to\fS_d$ is a map of PMQs, where the second is a group and hence a complete PMQ;
the image of $\hat\totmon(P,\psi)$ under the natural extension $\widehat{\fS_d\geo}\to\fS_d$ is the permutation $\klud$, which is the $\fS_d$-valued total monodromy of the covering $f^{-1}(\CmP)\to\CmP$. Using \cite[Proposition 7.13]{Bianchi:Hur1}, we obtain the equality
$\hat\totmon(P,\psi)=\pa{\klud,   \set{1,\dots,d};h}=\klud_g$, yielding
the following proposition.
\begin{prop}
 The map of sets $\check\cv$ from Definition \ref{defn:tcv} has values in the subspace
 $\Hur(\C,(\fS_d\geo)_+)_{\klud_g}$.
\end{prop}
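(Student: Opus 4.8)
The plan is to show that the configuration $(P,\psi)=\check\cv([\fr,f])$ satisfies the two conditions that cut out $\Hur(\C,(\fS_d\geo)_+)_{\klud_g}$ inside $\Hur(\C,\fS_d\geo)$: that $\psi$ is an \emph{augmented} map of PMQs, and that the $\widehat{\fS_d\geo}$-valued total monodromy $\hat\totmon(P,\psi)$ equals $\klud_g$. Both conditions are read off from the local and global monodromy computations assembled in the discussion preceding the statement.

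For augmentedness, fix an admissible generating set $\alpha_1,\dots,\alpha_k$ of $\pi_1(\CmP,*_P)$, with each $\alpha_i$ encircling a single critical value $z_i$. Since $z_i$ is genuinely a critical value of $f$, its fibre $f^{-1}(z_i)$ has $\lambda_i<d$ points, so the local monodromy $\psi(\alpha_i)$ is a permutation with $\lambda_i$ cycles and hence positive norm $N(\psi(\alpha_i))=d-\lambda_i>0$; thus $\psi(\alpha_i)\in(\fS_d\geo)_+$. As $\fQ(P)$ is free on the $\alpha_i$ by \cite[Theorem 3.3]{Bianchi:Hur1} and $\fS_d\geo$ is augmented, the preimage of $\one$ under $\psi$ is exactly $\set{\one}$, so $(P,\psi)\in\Hur(\C,(\fS_d\geo)_+)$.

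For the total monodromy I would compute the three pieces of $\hat\totmon(P,\psi)$ in the description of $\widehat{\fS_d\geo}$ from \cite[Proposition 7.13]{Bianchi:Hur1}. The underlying $\fS_d$-element is the image of $\hat\totmon(P,\psi)$ under the natural map $\widehat{\fS_d\geo}\to\fS_d$, i.e. the classical total monodromy of the $d$-fold cover $f^{-1}(\CmP)\to\CmP$; reading it off the trivialisation induced by the segments $I_{P,1},\dots,I_{P,d}$ and the directed-pole condition at each $Q_i$ identifies it with $\klud$, since the directed pole of order $d_i$ forces the $i$-th block of sheets to be permuted by a single $d_i$-cycle exactly as in the cycle decomposition of Definition \ref{defn:klud}. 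The partition is trivial, a single block $\set{1,\dots,d}$, because $f^{-1}(\CmP)=\sgn\setminus(\uQ\cup\uzeta)$ is connected and the monodromy group acts transitively. The attached integer is $N(\hat\totmon(P,\psi))$, which the Euler-characteristic count from the preceding discussion gives as $h$: for a disc $D$ enclosing all of $P$ one has $\chi(f^{-1}(D))=2-2g-n=d-N(\hat\totmon(P,\psi))$, whence $N(\hat\totmon(P,\psi))=h$. Assembling, $\hat\totmon(P,\psi)=(\klud;\set{1,\dots,d};h)=\klud_g$.

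The step I expect to be the main obstacle is matching conventions in the first piece: verifying that the $\fS_d$-valued total monodromy is literally $\klud$, with its cycles the prescribed consecutive blocks, rather than merely a permutation of the same cycle type. This hinges on using the canonical trivialisation of $f$ over the lower half-plane $\bH_{\le\Im(*_P)}$, labelling the $d$ sheets by the segments $I_{P,j}$ so that sheet $\bl_i+j$ is tangent to $e^{2\pi j\sqrt{-1}/d_i}X_i$ at $Q_i$, and then checking that a clockwise loop around all of $P$ permutes the sheets within each block exactly as dictated by $\klud$. Once the labelling is fixed, the connectedness and Euler-characteristic inputs are immediate from the computations carried out before the proposition.
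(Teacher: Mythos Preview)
Your proposal is correct and follows essentially the same route as the paper: the paper's proof is the discussion immediately preceding the proposition, and you have faithfully reassembled its three ingredients (augmentedness from $\lambda_i<d$, the trivial partition from connectedness of $\sgn\setminus(\uQ\cup\uzeta)$, and the norm $h$ from the Euler-characteristic count), then invoked \cite[Proposition 7.13]{Bianchi:Hur1} to conclude $\hat\totmon(P,\psi)=\klud_g$. You are also right that the only delicate point is that the $\fS_d$-valued total monodromy is literally $\klud$ rather than just a conjugate; the paper resolves this exactly as you anticipate, via the canonical trivialisation over $\bH_{\le\Im(*_P)}$ induced by the segments $I_{P,j}$ labelled according to the tangent directions $e^{2\pi j\sqrt{-1}/d_i}X_i$.
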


\subsection{The map of sets \texorpdfstring{$\bc$}{bc}}
We now construct a map of sets 
\[
\bc\colon\Hur(\C;(\fS_d\geo)_+)_{\klud_g}\to\ccOud.
\]
Let $(P,\psi)\in\Hur(\C;(\fS_d\geo)_+)_{\klud_g}$; the map $\psi\colon\fQ(P)\to\fS_d\geo$ gives rise to a map of groups $\phi\colon\pi_1(\CmP,*)\to\fS_d$ by \cite[Theorem 3.3]{Bianchi:Hur1}, and as in Subsection
\ref{subsec:monic} this gives in turn a $d$-fold covering $f\colon\cF\to\CmP$ with trivialised fibre $f^{-1}(*_P)\cong\set{1,\dots,d}$; we extend the trivialisation over $*_P$ to a trivialisation over the lower half-plane $\bH_{\le\Im(*_P)}$. Let $V\subset \bH_{>\Im(*_P)}=\set{z\,|\,\Im(z)>\Im(*_P)}\subset \C$ be a convex, open set containing $P$; then the restriction $f\colon f^{-1}(\C\setminus V)\to\C\setminus V$ is a disjoint union of $n$ cyclic covers of the annulus $\C\setminus V$:
more precisely, for all $1\le i\le n$, there is a component of $f^{-1}(\C\setminus V)$ containing the points labeled $\bl_i,\dots\bl_i+d_i-1$ of the trivialised fibre $f^{-1}(*_P)\cong\set{1,\dots,d}$.
We can compactify $f^{-1}(\C\setminus V)$ by adjoining $n$ points $Q_1^\cF,\dots,Q_n^\cF$: the point $Q_i^\cF$ compactifies the $i$\textsuperscript{th} component of $f^{-1}(\C\setminus V)$. We can also extend $f$ continuously by declaring $f(Q_i^\cF)=\infty\in\C P^1$. Similarly, we adjoin points $\zeta_{i,j}$ to compactify $\cF$ over the points $z_i\in P$, and extend $f$ continuously by declaring $f(\zeta_{i,j})=z_i$. The number $\lambda_i$ of points to adjoin over $z_i$ is equal to the number of cycles of the permutation $\phi(\alpha_i)$, where $\alpha_1,\dots,\alpha_k$ is an admissible generating set for $\pi_1(\CmP,*_P)$; in other words, we have $\lambda_i=d-N(\psi(\alpha_i))$. Since $\psi$ is an augmented map, we have moreover $\lambda_i<d$.

The result of the compactification is a smooth Riemann surface $\bar\cF$ endowed with a branched cover map $f\colon\bar\cF\to \C P^1$, and whose Euler characteristic is
\[
 \chi(\bar\cF)=d\chi(\CmP)+n+\sum_{i=1}^k\lambda_i=d+n-\sum_{i=1}^k N(\psi(\alpha_i))=d+n-h=2-2g;
\]
hence $\bar\cF$ is a closed Riemann surface of genus $g$. Moreover, for each $1\le i\le n$ we can uniquely define a vector $X_i^\cF\in T_{Q_i^\cF}\bar\cF$ satisfying the following properties:
\begin{itemize}
 \item $X_i^\cF$ is tangent to the unique segment in $\bar\cF$ projecting homeomorphically to $I_P$ and having as endpoints $Q_i\in\bar\cF$ and the point of $f^{-1}(*_P)$ labelled $\bl_i$;
 \item $f$ has a $X_i^\cF$-\emph{directed} pole of order $d_j$ at $Q_j^\cF$, i.e.
 for every holomorphic chart $w_i$ defined on a neighbourhood of $Q_i^\cF\in\bar\cF$ such that $w_i(Q_i^\cF)=0\in\C$ and $Dw_i(X_i^\cF)=\partial/\partial x\in T_0\C$,
 the function $f$ has the form
 \[
 f(w_i)=\frac{1}{w_i^{d_i}}+\mathrm{h.o.t.}.
 \]
\end{itemize}
Note that the first property determines $X_j^\cF$ up to real positive multiples, whereas the second determines it up to a rotation by an integral multiple of $2\pi/d_j$. 

We can now identify $(\bar\cF;Q_1^\cF,\dots,Q_n^\cF;X_1^\cF,\dots,X_n^\cF)$ with $\sgn=(\Sigma_g;\uQ;\uX)$ by a diffeomorphism; the Riemann structure on $\bar\cF$ gives a Riemann structure $\fr$ on $\Sigma_{g,n}$, and the function $f$ can be considered as a $\ud$-directed meromorphic function on $\sgn$. We thus obtain a class $[\fr,f]\in\ccOud$.
\begin{defn}
 \label{defn:bc}
  We define a map of sets $\bc\colon \Hur(\C,(\fS_d\geo)_+)_{\klud_g}\to \ccOud$
  by the above assignment $(P,\psi)\mapsto [\fr,f]$.
\end{defn}
The maps of sets $\check\cv$ and $\bc$ are inverse bijections, as is straightforward from the two constructions. It is left to prove that $\check\cv$ and $\bc$ are continuous: this is done in Subsections \ref{subsec:ccvcontinuous} and \ref{subsec:bccontinuous} of the appendix, respectively.

\begin{rem}
 The reader will notice that the statement of Theorem \ref{thm:main3} for $g=0$ and $n=1$ suffices to prove that $\Hur^{\Delta}(\fS_d\geo)(\hat\lc_d)$ is a manifold, since $\hat\lc_d=\klud_0$, and may wonder why one does not first prove Theorem \ref{thm:main3} and then present Theorem \ref{thm:main2}, skipping all of Subsection \ref{subsec:monic}.
 The reason is that we use the arguments of Subsection \ref{subsec:monic} in Subsection \ref{subsec:bccontinuous} of the appendix, which is part of the proof of Theorem \ref{thm:main3}.
 \end{rem}

\section{Stable rational cohomology of moduli spaces}
\label{sec:mumford}
\subsection{Surfaces with boundary}
The moduli space $\fM_{g,n}$ parametrises \emph{closed} Riemann surfaces of genus $g$ with $n$ directed marked points. It is convenient in this section to replace $\fM_{g,n}$ by the homotopy equivalent space $\fM_{g,n}^\del$.
\begin{defn}
 We denote by $\sgn^\del$ a compact, oriented, smooth surface of genus $g$ with $n$ ordered boundary components. Each boundary component $\del_i\sgn^\del$ is endowed with a fixed parametrisation by $S^1\subset\C$ which is compatible with the orientation induced on the boundary by the orientation of $\sgn^\del$. We also fix a germ of parametrisation of a collar neighbourhood of $\del_i\sgn^\del$ by a collar neighbourhood of $S^1\subset\mathbb{D}$, where $\mathbb{D}=\set{z\in \C\,\colon\, |z|\le 1}$ is the unit disc in $\C$; this germ of parametrisation of a collar neighbourhood is automatically orientation-preserving.
 
 We denote by $\Diff(\sgn^\del,\del)$ the topological group of diffeomorphisms of $\sgn^\del$ fixing pointwise some neighbourhood of $\del\sgn^\del$.
 
 We denote by $\Riem(\sgn^\del)$ the space of Riemann structures $\fr$ on $\sgn^\del$ for which the germ of parametrisation of a collar neighbourhood of each boundary curve of $\sgn^\del$ admits a holomorphic representative. The moduli space $\fM_{g,n}^\del$ contains equivalence classes $[\fr]$ of Riemann structures
 in $\Riem(\sgn^\del)$:
two Riemann structures $\fr$ and $\fr'$ are considered equivalent if there is a diffeomorphism $\phi\in\Diff(\sgn^\del,\del)$ that pulls back $\fr$ to $\fr'$. Formally, $\fM_{g,n}^\del$ is the quotient of the space $\Riem(\sgn^\del)$ by the action of the topological group $\Diff(\sgn^\del,\del)$.
\end{defn}
We can sew a copy $\mathbb{D}_i$ of the standard disc $\mathbb{D}=\set{z\in \C\,\colon\, |z|\le 1}$ along each boundary curve of $\sgn^\del$ by the map $S^1\to S^1$ given by $z\mapsto z^{-1}$; the result is a closed surface of genus $g$, endowed with $n$ marked points (the centres $Q_i$ of the discs $\mathbb{D}_i$); moreover, each marked point is endowed with a non-zero tangent vector $X_i:=\frac{d}{dx}\in T_{Q_i}\mathbb{D}_i\cong T_0\mathbb{D}$. We thus can obtain $\sgn$ from $\sgn^\del$ by sewing $n$ standard discs; by using germs of parametrised collar neighbourhoods of boundary components, one can also ensure to have a well-defined smooth structure on the sewing locus. Compare also with Definition \ref{defn:scM} in the next section.

If we have a Riemann structure $\fr\in\Riem(\sgn^\del)$, we can extend $\fr$ to a Riemann structure on $\sgn$ by adjoining the stadard Riemann structure on the discs $\mathbb{D}_i$: this gives a map $\Riem(\sgn^\del)\to\Riem(\sgn)$. Similarly, a diffeomorphism of $\sgn^\del$ fixing pointwise a neighbourhood of the boundary can be extended to a diffeomorphism of $\sgn$ by the identity of each disc $\mathbb{D}_i$: this gives a homomorphism of groups $\Diff(\sgn^\del,\del)\to\Diff_{g,n}$. Finally, the map $\Riem(\sgn^\del)\to\Riem(\sgn)$ is equivariant with respect to the two actions of 
$\Diff(\sgn^\del,\del)$ and $\Diff_{g,n}$ respectively, compared along the given group homomorphism: therefore we obtain a map of spaces $\fM_{g,n}^\del\to\fM_{g,n}$, which is known to be a weak homotopy equivalence.

The advantage of the space $\fM_{g,n}^\del$ is that it allows to define the genus stabilisation map.
 Let $\bar\fr_{1,2}$ be a fixed Riemann structure in $\Riem(\Sigma_{1,2}^\del)$ and let $1\le i\le n$.

 Given a Riemann structure $\fr\in\Riem(\sgn^\del)$, we can obtain a Riemann surface of genus $g+1$ with $n$ boundary curves as follows: for a fixed $1\le i\le n$,
 we sew $(\sgn^\del,\fr)$ and $(\Sigma_{1,2}^\del,\bar\fr_{1,2})$ by identifying $\del_i\sgn^\del$ with $\del_1\Sigma_{1,2}^\del$ along the map $S^1\to S^1$ given by $z\mapsto z^{-1}$.
 The new surface has $n-1$ boundary curves coming from $\sgn^\del$, and one boundary curve 
 coming from $\Sigma_{1,2}$, which is declared to be the $i$\textsuperscript{th} boundary curve of the new surface. We can identify the new surface with $\Sigma_{g+1,n}^\del$.
\begin{defn}
\label{defn:stabi}
 The above construction gives rise, for all $g\ge0$, $n\ge1$ and $1\le i\le n$, to a map
 \[
  \stab_i\colon\fM_{g,n}^\del\to\fM_{g+1,n}^\del.
 \]
\end{defn}
\subsection{A brief historical note}
A classical theorem of Harer \cite{Harer} (with later improvements in the stability ranges, see \cite{Ivanov, Boldsen, ORW:resolutions_homstab})
ensures that $\stab_i$ induces an isomorphism in integral homology $H_*(\fM_{g,n}^\del)\to H_*(\fM_{g+1,n}^\del)$ in the range of degrees $*\le\frac 23g-1$.

For simplicity, in
the rest of the section we shall restrict to surfaces with one boundary component, i.e. we set $n=1$.
It is interesting to compute the homology groups of the homotopy colimit
\[
 \fM_{\infty,1}\colon =\mathrm{hocolim}\pa{\fM_{0,1}^\del\overset{\stab_1}{\to}\fM_{1,1}^\del\overset{\stab_1}{\to}\fM_{2,1}^\del\overset{\stab_1}{\to}\dots},
\]
because these homology groups coincide, in a range depending on $g$, with the actual homology groups of $\fM_{g,1}$.
Mumford \cite{Mumford} conjectured that the cohomology ring $H^*(\fM_{\infty,1};\bQ)$ is isomorphic to the polynomial ring $\bQ[\kappa_1,\kappa_2,\dots]$ generated by variables $\kappa_i$ of degree $2i$, and he also gave a construction of candidates for such free multiplicative generators. Miller \cite{Miller} and Morita \cite{Morita87} proved independently the algebraic independence over $\bQ$ of the classes $\kappa_i\in H^*(\fM_{\infty,1};\bQ)$.

Subsequently, Tillmann \cite{Tillmann97} proved that the integral homology $H_*(\fM_{\infty,1})$ coincides with the homology of a component of an infinite loop space, and Madsen and Weiss \cite{MadsenWeiss} finally proved that $H_*(\fM_{\infty,1})$ is isomorphic to the homology $H_*(\Omega^\infty_0\MTSO(2))$ of a component of the infinite loop space associated with the spectrum $\MTSO(2)$. As a corollary of their theorem, Madsen and Weiss proved the Mumford conjecture.

\subsection{Double loop spaces of Hurwitz-Ran spaces}
In order to state the main result of this section, we need to recollect some definitions. For $d\ge2$ the \emph{relative} Hurwitz-Ran space
$\Hur_+(\cR,\del\cR;\fS_d\geo,\fS_d)_{\one}$, introduced in \cite{Bianchi:Hur2},
is the space of triples $(P,\psi,\phi)$ where
 \begin{itemize}
  \item $P$ is a \emph{non-empty}, finite subset of $\cR=[0,1]^2$ (whence the ``$_+$'');
  \item $\phi\colon \pi_1(\CmP,*_P)\to\fS_d$ is a morphism of groups (the $\fS_d$-valued monodromy);
  \item $\psi\colon \fQ_{(\cR,\del\cR)}(P)\to\fS_d\geo$ is a morphism of PMQs, only defined on $\fQ_{(\cR,\del\cR)}(P)\subset\pi_1(\CmP,*_P)$, the relative fundamental PMQ of $P$ with respect to the \emph{nice couple} $(\cR,\del\cR)$ (see \cite[Definition 2.9]{Bianchi:Hur2}).
 \end{itemize}
A compatibility condition between $\phi$ and $\psi$ is required:
the pair $(\psi,\phi)$ is required to be a morphism of PMQ-group pairs
\[
(\psi,\phi)\colon(\fQ_{(\cR,\del\cR)}(P),\pi_1(\CmP,*_P))\to(\fS_d\geo,\fS_d).
\]
Moreover the total monodromy $\totmon(P,\psi,\phi)$ is required to be equal to $\one\in\fS_d$; in the relative setting, the only well-defined total monodromy is the $\fS_d$-valued total monodromy, and there is no total monodromy with values in $\widehat{\fS_d\geo}$.

For $d\ge2$ we have an inclusion of PMQ-group pairs $(\fS_d\geo,\fS_d)\hookrightarrow(\fS_{d+1}\geo,\fS_{d+1}\geo)$ coming from the standard inclusion of groups $\fS_d\hookrightarrow\fS_{d+1}$; by functoriality of Hurwitz-Ran spaces in the PMQ-group pair (see \cite[Subsection 4.1]{Bianchi:Hur2}), we obtain an inclusion
\[
 \Hur_+(\cR,\del\cR;\fS_d\geo,\fS_d)_{\one}\hookrightarrow\Hur_+(\cR,\del\cR;\fS_{d+1}\geo,\fS_{d+1})_{\one}.
\]
The following is the main result of this section.
\begin{thm}
 \label{thm:main4}
 Let $\bB_\infty=\mathrm{hocolim}_{d\to\infty}\Hur_+(\cR,\del\cR;\fS_d\geo,\fS_d)_{\one}$.
 Then there is a zig-zag of integral homology equivalences of spaces starting from
 $\fM_{\infty,1}$ and ending with $\Omega^2_0 \bB_\infty$; in particular
 \[
  H_*(\fM_{\infty,1})\cong H_*\pa{\Omega^2_0 \bB_\infty}.
 \]
\end{thm}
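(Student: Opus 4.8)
The plan is to assemble the chain of homology isomorphisms indicated in the introduction, passing from the stable moduli space through the strict topological-monoid model of Hurwitz spaces and then group-completing. Throughout I work with $n=1$, so $\ud=(d)$ and the relevant total monodromy is $\kld{d}_g$. I first record that $\fM^\del_{g,1}\simeq\fM_{g,1}$, so that $H_*(\fM_{\infty,1})\cong\colim_{g\to\infty}H_*(\fM_{g,1})$, the colimit being taken along the genus stabilisation maps $\stab_1$.

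\textbf{Step 1: from moduli spaces to monoidal Hurwitz spaces.} By Theorem \ref{thm:main1} and Theorem \ref{thm:main3}, for every pair with $d\ge 2g$ there is a chain of equivalences $\fM_{g,1}\simeq\ccO_{g,1}[d]\cong\Hur^{\Delta}(\fS_d\geo)(\kld{d}_g)$. Following \cite[Definition 2.4]{Bianchi:Hur3}, I replace the simplicial Hurwitz space by the homotopy-equivalent strict topological monoid $\mHurm(\fS_d\geo)$, landing in its path component $\mHurm(\fS_d\geo)_{\kld{d}_g}$. Hence $H_*(\fM_{g,1})\cong H_*\pa{\mHurm(\fS_d\geo)_{\kld{d}_g}}$ whenever $d\ge 2g$.

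\textbf{Step 2: the double-indexed diagram and a diagonal argument.} I organise the spaces $\mHurm(\fS_d\geo)_{\kld{d}_g}$, for $d\ge 2$ and $g\ge0$, into a diagram with two families of stabilisation maps: the genus map, given by left-multiplication in the monoid by the class $\widehat{(1,2)}\,\widehat{(1,2)}$ (which raises $\kld{d}_g$ to $\kld{d}_{g+1}$ and matches $\stab_1$ under the equivalence of Step 1), and a map increasing $d$ by $1$. Since the equivalences of Step 1 hold exactly in the cofinal range $d\ge 2g$, a standard cofinality/diagonal argument identifies
\[
\colim_{g\to\infty} H_*(\fM_{g,1})\cong\colim_{g,d\to\infty}H_*\pa{\mHurm(\fS_d\geo)_{\kld{d}_g}}.
\]

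\textbf{Step 3: group-completing in the genus.} I compute the double colimit iteratively, first letting $g\to\infty$ for fixed $d$. The genus stabilisation is multiplication by a single class of $\pi_0\mHurm(\fS_d\geo)$, so this colimit computes the localisation of $H_*\pa{\mHurm(\fS_d\geo)}$ at that class. By the group-completion theorem \cite{McDuffSegal, FM94} the localisation is the homology of a path component of $\Omega B\mHurm(\fS_d\geo)$, and \cite[Theorem 4.19]{Bianchi:Hur3} recognises this delooping as a double loop space, giving
\[
\colim_{g\to\infty} H_*\pa{\mHurm(\fS_d\geo)_{\kld{d}_g}}\cong H_*\pa{\Omega^2_0\Hur_+(\cR,\del\cR;\fS_d\geo,\fS_d)_{\one}}.
\]
Finally, passing to the colimit over $d$ and using that integral homology commutes with filtered homotopy colimits — together with the increasing connectivity of the $d$-stabilisation maps, which allows $\Omega^2$ to commute with the homotopy colimit defining $\bB_\infty$ — yields $H_*(\fM_{\infty,1})\cong H_*\pa{\Omega^2_0\bB_\infty}$; stringing together the equivalences of Steps 1--3 produces the asserted zig-zag.

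The hard part is Step 3. Applying the group-completion theorem correctly requires that the genus-stabilising class be sufficiently central in $\pi_0\mHurm(\fS_d\geo)$ for the localisation to be homotopy-meaningful, and it requires pinning down which path component $\Omega^2_0$ is the one selected by the total monodromy $\kld{d}_g$. The most substantial input is \cite[Theorem 4.19]{Bianchi:Hur3}, which identifies the group completion $\Omega B\mHurm(\fS_d\geo)$ with the double loop space of the \emph{relative} Hurwitz-Ran space $\Hur_+(\cR,\del\cR;\fS_d\geo,\fS_d)$: the two-fold delooping reflects the $E_2$-structure underlying planar Hurwitz configurations, and reconciling the component bookkeeping on both sides is the delicate point.
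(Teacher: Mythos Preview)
Your outline matches the paper's strategy closely through Steps 1 and 2: the diagonal/cofinality argument reducing to the double colimit via Theorems \ref{thm:main1} and \ref{thm:main3} is exactly what the paper does (Lemma \ref{lem:zigzag}). The gap is in Step 3, and it is not the one you flagged.

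Centrality is \emph{not} the obstruction: the image of $\widehat{(1,2)}^2$ in $\cG(\fS_d\geo)\cong\tilde\fS_d$ is $(2,\one)$, which is central, so the weak braiding already makes left multiplication by it homotopy-commute with everything. The real problem is that $\widehat{(1,2)}^2$ is not a \emph{propagator} of the monoid $\mHurm((\fS_d\geo)_+)$: its powers all have $\widehat{\fS_d\geo}$-valued total monodromy of the form $(\one;\{1,2\},\{3\},\dots,\{d\};2k,0,\dots,0)$, so the associated telescope only sees a proper subset of components, and the localisation at this element does \emph{not} compute $H_*(\Omega_0 B\mHurm)$. The group-completion theorem as you invoke it therefore does not apply.

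The paper's fix (Subsection \ref{subsec:propagators}, Lemma \ref{lem:braiding}) is the missing idea: on the specific components $\mHurm_{+,\kld{d}_g}$, the weak braiding together with the identity $(j+1,j+2)^{\lc_d}=(j,j+1)$ shows that left multiplication by $(1,\fc_{(1,2)})^2$ is homotopic to left multiplication by $(1,\fc_{(j,j+1)})^2$ for every $j$. Hence multiplication by $e=(1,\fc_{(1,2)})^{2d-2}$ is homotopic, on these components, to multiplication by the element $e'=(1,\fc_{(1,2)})^2(1,\fc_{(2,3)})^2\cdots(1,\fc_{(d-1,d)})^2$, and $e'$ \emph{is} a propagator (its total monodromy has the trivial partition $\{1,\dots,d\}$). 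This is what lets the telescope for your genus stabilisation be identified in homology with the group completion. Without this braiding argument your Step 3 does not go through.
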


\subsection{A new proof of the Mumford conjecture}
As a corollary of Theorem \ref{thm:main4}, we obtain a new proof of the Mumford conjecture.
\begin{cor}
 \label{cor:mumford}
The rational cohomology ring $H^*(\fM_{\infty,1};\bQ)$ is isomorphic to a polynomial ring $\bQ[y_1,y_2,\dots]$ generated by a sequence of variables $y_i$, one in each even degree $2i>0$.
\end{cor}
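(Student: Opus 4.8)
The plan is to deduce the corollary from Theorem \ref{thm:main4} by a purely (co)homological computation of $H^*(\Omega^2_0\bB_\infty;\bQ)$, realised as the inverse limit over $d$ of the rational cohomology rings of the double loop spaces $\Omega^2_0\Hur_+(\cR,\del\cR;\fS_d\geo,\fS_d)_{\one}$. First I would invoke Theorem \ref{thm:main4}, which supplies a zig-zag of integral homology equivalences connecting $\fM_{\infty,1}$ with $\Omega^2_0\bB_\infty$. Each such homology equivalence induces an isomorphism on rational cohomology that is compatible with cup products, so one obtains an isomorphism of graded $\bQ$-algebras $H^*(\fM_{\infty,1};\bQ)\cong H^*(\Omega^2_0\bB_\infty;\bQ)$. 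The entire content of the corollary is thus the identification of the right-hand side with a polynomial ring.

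Next I would compute $H^*(\Omega^2_0\bB_\infty;\bQ)$ through the defining homotopy colimit $\bB_\infty=\mathrm{hocolim}_{d\to\infty}\Hur_+(\cR,\del\cR;\fS_d\geo,\fS_d)_{\one}$. Rationally, the homology of $\Omega^2_0\bB_\infty$ is the filtered colimit over $d$ of the homologies of the finite stages $\Omega^2_0\Hur_+(\cR,\del\cR;\fS_d\geo,\fS_d)_{\one}$, a fact already underlying the argument of Theorem \ref{thm:main4}. Dualizing over the field $\bQ$ turns this colimit of homology coalgebras into an inverse limit of cohomology algebras, giving
\[
H^*\pa{\Omega^2_0\bB_\infty;\bQ}\cong\lim_{d\to\infty} H^*\pa{\Omega^2_0\Hur_+(\cR,\del\cR;\fS_d\geo,\fS_d)_{\one};\bQ},
\]
an isomorphism of graded rings, where the Mittag-Leffler condition for the tower guarantees that no $\lim^1$-term obstructs the identification.

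Finally, for each fixed $d$ I would apply \cite[Theorem 6.1]{Bianchi:Hur3}, which is applicable here precisely because $\fS_d\geo$ is a Poincar\'e PMQ (Theorem \ref{thm:main2}) and by the homeomorphism of Theorem \ref{thm:main3}, to compute $H^*(\Omega^2_0\Hur_+(\cR,\del\cR;\fS_d\geo,\fS_d)_{\one};\bQ)$. Since $\Omega^2_0\bB_\infty$ and each finite stage is a connected double loop space, hence a rational H-space of finite type, the Hopf/Milnor--Moore structure theorem forces its rational cohomology to be a free graded-commutative algebra. A standard rational-homotopy computation then identifies the free generators, showing that they all sit in even degrees and that, in the limit $d\to\infty$, they contribute exactly one generator $y_i$ in each even degree $2i>0$; equivalently the rational homotopy of $\Omega^2_0\bB_\infty$ is concentrated in even degrees. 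This yields $\bQ[y_1,y_2,\dots]$ and, combined with the first two steps, proves the corollary.

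The main obstacle resides in the last two steps. On the one hand, verifying the Mittag-Leffler condition requires controlling the $d$-stabilisation maps on rational cohomology and showing they are eventually surjective in each fixed degree, which is exactly where the finite-type information coming from the explicit Hurwitz models of \cite{Bianchi:Hur3} is indispensable. On the other hand, the decisive point is ruling out odd-degree generators: it is the per-$d$ computation of \cite[Theorem 6.1]{Bianchi:Hur3}, together with the parity of the relevant homotopy groups, that forces the free graded-commutative algebra to be a genuine polynomial ring rather than one carrying an exterior part. By contrast, the dualization over $\bQ$ and the passage from homology equivalences to cohomology ring isomorphisms are formal.
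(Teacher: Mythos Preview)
Your overall strategy matches the paper's: reduce via Theorem~\ref{thm:main4} to computing $H^*(\Omega^2_0\bB_\infty;\bQ)$, invoke \cite[Theorem 6.1]{Bianchi:Hur3} (licensed by Theorem~\ref{thm:main2}), check Mittag--Leffler, and finish by rational homotopy theory. But you reverse the order of two operations, and this creates a genuine gap.

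The paper first computes $H^*(\bB_\infty;\bQ)$, \emph{not} $H^*(\Omega^2_0\bB_\infty;\bQ)$, as an inverse limit. The cited theorem from \cite{Bianchi:Hur3} identifies $H^*(\Hur_+(\cR,\del\cR;\fS_d\geo,\fS_d)_{\one};\bQ)$ --- the cohomology of the Hurwitz--Ran space itself, not of its double loop space --- with the conjugation-invariant subalgebra $\cA(\fS_d\geo)=\bQ[\fS_d\geo]^{\fS_d}$. The paper then analyses the transition maps $\cA(\fS_{d+1}\geo)\to\cA(\fS_d\geo)$ explicitly on a basis indexed by cycle types, shows they are surjective (whence Mittag--Leffler) and isomorphisms in degrees $\le d$, and invokes \cite{LehnSorger} to identify the limit with $\bQ[x_1,x_2,\dots]$. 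Only \emph{after} this does the paper double loop: since $\bB_\infty$ is simply connected with polynomial rational cohomology on even generators, the classifying map $\bB_\infty\to\prod_i K(\bQ,2i)$ is a rational equivalence, and double looping gives $\bQ[y_1,y_2,\dots]$ immediately.

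Your ordering --- double loop each finite stage first, then take the limit --- runs into trouble at the step you flag as ``a standard rational-homotopy computation''. For fixed $d$, the ring $\cA(\fS_d\geo)$ is concentrated in even degrees but is \emph{not} a polynomial ring; it agrees with $\bQ[x_1,x_2,\dots]$ only in degrees $\le d$. Evenness of $H^*$ alone does not force $\pi_*\otimes\bQ$ to be even (think of $S^2$), so Milnor--Moore does not hand you a polynomial ring for $H^*(\Omega^2_0\Hur_+(\cR,\del\cR;\fS_d\geo,\fS_d)_{\one};\bQ)$ at any finite stage, and you have given no argument ruling out exterior generators there. You also have not established Mittag--Leffler for the tower of double loop space cohomologies, which is a separate (and harder) statement than Mittag--Leffler for the tower $\cA(\fS_{d+1}\geo)\to\cA(\fS_d\geo)$. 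The paper's order of operations sidesteps both issues: the even-polynomial structure appears only at the limit $\bB_\infty$, where it is exactly what is needed.
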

\begin{proof}
By Theorem \ref{thm:main4} it suffices to prove an isomorphism of rings $H^*(\Omega_0^2\bB_\infty;\bQ)\cong\bQ[y_1,y_2,\dots]$.
By Theorem \ref{thm:main2} the PMQ $\fS_d\geo$ is Poincar\'e for all $d\ge2$. We can thus apply \cite[Theorem 6.1]{Bianchi:Hur3} and obtain an isomorphism of graded rings
\[
 H^*\pa{\Hur_+(\cR,\del\cR;\fS_{d+1}\geo,\fS_{d+1})_{\one};\bQ}\cong \cA(\fS_d\geo),
\]
where we denote by $\cA(\fS_d\geo):=\bQ[\fS_d\geo]^{\fS_d}$ the sub-$\bQ$-algebra of conjugation-invariants in the PMQ-algebra $\bQ[\fS_d\geo]$. A basis of $\cA(\fS_d\geo)$ as a $\bQ$-vector space is given by the elements $\sca{S}=\sum_{\sigma\in S}\sca{\sigma}$, for $S$ varying among conjugacy classes of $\fS_d\geo$.
Here $\sca{\sigma}$ denotes the basis element of $\bQ[\fS_d\geo]$ corresponding to $\sigma\in\fS_d\geo$.

A conjugacy class in $\fS_d\geo$ is uniquely determined by a sequence
$\ulambda=(\lambda_i)_{i\ge2}$ of integers $\lambda_i\ge0$, such that $\sum_{i\ge2}i\lambda_i\le d$: we associate with $\ulambda$ the conjugacy class of permutations whose cycle decomposition contains precisely $\lambda_i$ cycles of length $i$, for all $i\ge2$, and a suitable number of fixpoints. We shall denote by $S_{\ulambda,d}\subseteq\fS_d\geo$ the conjugacy class corresponding to $\ulambda$, if $\sum_{i\ge2}i\lambda_i\le d$, and the empty set, if $\sum_{i\ge2}i\lambda_i> d$.
The element $\sca{S_{\ulambda,d}}\in\cA(\fS_d\geo)$ lies in cohomological degree $2N(\ulambda)$, where $N(\ulambda):=\sum_{i\ge2}(i-1)\lambda_i$.

The inclusion $\fS_d\geo\hookrightarrow\fS_{d+1}\geo$ is norm-preserving and induces an injective map betwee sets of conjugacy classes, sending $S_{\ulambda,d}\mapsto S_{\ulambda,d+1}$. Moreover, all conjugacy classes $S_{\ulambda,d+1}$ with $N(\ulambda)\le d/2$ are hit by a (non-empty) conjugacy class $S_{\ulambda,d}\subset\fS_d\geo$.

The argument of the proof of
\cite[Theorem 6.1]{Bianchi:Hur3} ensures that the map of cohomology rings induced by the inclusion
\[
 \Hur_+(\cR,\del\cR;\fS_d\geo,\fS_d)_{\one}\hookrightarrow\Hur_+(\cR,\del\cR;\fS_{d+1}\geo,\fS_{d+1})_{\one}.
\]
corresponds to the map of rings $\cA(\fS_{d+1}\geo)\to\cA(\fS_d\geo)$ sending $\sca{S_{\ulambda,d+1}}\mapsto\sca{S_{\ulambda,d}}$. In particular the map $\cA(\fS_{d+1}\geo)\to\cA(\fS_d\geo)$ is surjective,
and it is an isomorphism in cohomological degree $*\le d$.
We can thus compute
\[
 H^*\pa{\bB_\infty;\bQ}\cong\lim_{d\to\infty}\cA(\fS_d\geo),
\]
leveraging on the fact that the Mittag-Leffler condition is satisfied and no $\lim^1$ terms occur.

We now recall from \cite[Remark 6.3]{LehnSorger} that $\cA(\fS_d\geo)$ is isomorphic in cohomological degrees $*\le d$ to the polynomial algebra $\bQ[x_1,x_2,x_3,\dots,]$ generated by a sequence of variables $x_i$, one in each degree $2i\ge0$; see also \cite[Proposition 3.3]{BCP:Hilb}. We thus obtain an isomorphism of rings
\[
 H^*\pa{\bB_\infty;\bQ}\cong \bQ[x_1,x_2,x_3,\dots,].
\]
To conclude, by \cite[Theorem 4.19]{Bianchi:Hur3} the spaces $\Hur_+(\cR,\del\cR;\fS_d\geo,\fS_d)_{\one}$
are simply connected, and thus also $\bB_{\infty}$ is simply connected. We can then use a standard argument in rational homotopy theory: the map $\bB_\infty\to\prod_{i=1}^\infty K(2i,\bQ)$ classifying the cohomology classes $x_i$, is a rational cohomology equivalence between simply connected spaces, hence a rational equivalence. Its double looping is thus also a rational equivalence, and in particular a rational cohomology equivalence. Taking one component of the double loop spaces, we obtain
\[
\begin{split}
 H^*(\Omega_0^2\bB_\infty;\bQ) & \cong H^*\pa{\prod_{i=1}^\infty \Omega_0^2K(2i,\bQ);\bQ}\\
 &\cong H^*\pa{\prod_{i=1}^\infty K(2i,\bQ);\bQ}\cong
 \bQ[y_1,y_2,y_3,\dots,].
\end{split}
\]
\end{proof}
The rest of the section is devoted to the proof of Theorem \ref{thm:main4}.
\subsection{The main diagram}
For all $d\ge2$ we have a PMQ $\fS_d\geo$, giving rise to a Hurwitz-Moore topological monoid $\mHurm(\fS_d\geo)$, see \cite[Definition 2.4]{Bianchi:Hur3}. It will be convenient to consider in this section the submonoid
$\mHurm((\fS_d\geo)_+)$, containing configurations $(t,(P,\psi))$ with $\psi$ an \emph{augmented} map of PMQs.
The induced map on group completions
$\Omega B\mHurm((\fS_d\geo)_+)\hookrightarrow\Omega B\mHurm(\fS_d\geo)$ is a weak equivalence, so for our purposes the two monoids are interchangeable.

The inclusion of PMQs $\fS_d\geo\hookrightarrow\fS_{d+1}\geo$ gives rise to an inclusion of monoids $\mHurm((\fS_d\geo)_+)\hookrightarrow\mHurm((\fS_{d+1}\geo)_+)$.
\begin{nota}
Recall from \cite[Theorem 2.15]{Bianchi:Hur3} that
$\pi_0(\mHurm((\fS_d\geo)_+))$ is in bijection with $\widehat{\fS_d\geo}$.
For all transposition $\tr\in\widehat{\fS_d\geo}$ we fix once and for all a configuration in $\mHurm((\fS_d\geo)_+)_{\hat\tr}$ of ``width'' $1$, i.e. of the form $(1,\fc_\tr)$ for some $\fc_\tr\in\Hur(\mcR;(\fS_d\geo)_+)_{\hat\tr}$.

We denote by $\bl_\tr\colon\mHurm((\fS_d\geo)_+)\to\mHurm((\fS_d\geo)_+)$ the map $(t,\fc)\mapsto (1,\fc_\tr)\cdot(t,\fc)$.

We denote by $\br_\tr\colon\mHurm((\fS_d\geo)_+)\to\mHurm((\fS_d\geo)_+)$ the map $(t,\fc)\mapsto (t,\fc)\cdot (1,\fc_\tr)$. 
\end{nota}

Recall Definition \ref{defn:klud}, and note that for $n=1$ the element $\kld{d}_g:=\klud_g$ can be factored in $\widehat{\fS_d\geo}$ as
\[
 \kld{d}_g=\widehat{(1,2)}\dots \widehat{(1,2)}\ \cdot\ \hat\lc_d=\widehat{(1,2)}\dots \widehat{(1,2)}\ \cdot\ \widehat{(1,2)}\widehat{(2,3)}\dots\widehat{(d-1,d)},
\]
where $\widehat{(1,2)}$ is repeated $2g$ times at the beginning.

For all $d\ge2$ and $g\ge0$, we consider the following two maps:
\begin{itemize}
 \item the map $\bl_{(1,2)}^2\colon \mHurm((\fS_d\geo)_+)_{\kld{d}_g}\to\mHurm((\fS_d\geo)_+)_{\kld{d}_{g+1}}$,
 obtained by iterating twice the map $\bl_{(1,2)}$;
 \item the map $\mHurm((\fS_d\geo)_+)_{\kld{d}_g}\to\mHurm((\fS_{d+1}\geo)_+)_{\kld{d+1}_g}$ obtained by composing the inclusion
 $\mHurm((\fS_d\geo)_+)_{\kld{d}_g}\hookrightarrow\mHurm((\fS_{d+1}\geo)_+)_{\kld{d}_g}$ and the right multiplication map
 $\br_{(d,d+1)} \colon \mHurm((\fS_{d+1}\geo)_+)_{\kld{d}_g}\to \mHurm((\fS_{d+1}\geo)_+)_{\kld{d+1}_g}$.
\end{itemize}
We obtain a strictly commutative diagram of spaces, which we will refer to as the \emph{main diagram}
\[
 \begin{tikzcd}[column sep=15pt]
  \mHurm((\fS_2\geo)_+)_{\kld{2}_0}\ar[r,"\bl^2_{(1,2)}"]\ar[d,"\br_{(2,3)}"] &
  \pa{\mHurm((\fS_2\geo)_+)_{\kld{2}_1}}\ar[r,"\bl^2_{(1,2)}"]\ar[d,"\br_{(2,3)}"] &
  \mHurm((\fS_2\geo)_+)_{\kld{2}_2}\ar[r,"\bl^2_{(1,2)}"]\ar[d,"\br_{(2,3)}"] & \dots\\
  \mHurm((\fS_3\geo)_+)_{\kld{3}_0}\ar[r,"\bl^2_{(1,2)}"]\ar[d,"\br_{(3,4)}"] &
  \mHurm((\fS_3\geo)_+)_{\kld{3}_1}\ar[r,"\bl^2_{(1,2)}"]\ar[d,"\br_{(3,4)}"] &
  \mHurm((\fS_3\geo)_+)_{\kld{3}_2}\ar[r,"\bl^2_{(1,2)}"]\ar[d,"\br_{(3,4)}"] & \dots\\
  \mHurm((\fS_4\geo)_+)_{\kld{4}_0}\ar[r,"\bl^2_{(1,2)}"]\ar[d,"\br_{(4,5)}"] &
  \mHurm((\fS_4\geo)_+)_{\kld{4}_1}\ar[r,"\bl^2_{(1,2)}"]\ar[d,"\br_{(4,5)}"] &
  \pa{\mHurm((\fS_4\geo)_+)_{\kld{4}_2}}\ar[r,"\bl^2_{(1,2)}"]\ar[d,"\br_{(4,5)}"] & \dots\\
  \vdots &\vdots &\vdots &\ddots
 \end{tikzcd}
\]
\begin{defn}
 We denote by $\mHurm((\fS_\infty\geo)_+)_{\kld{\infty}_\infty}$ the homotopy colimit of the main diagram.
\end{defn}
We have highlighted some of the objects in the main diagram by putting them in parentheses: these are the spaces $\mHurm((\fS_{2g}\geo)_+)_{\kld{2g}_g}$, for varying $g\ge1$, and they form a diagonal of the diagram, which in particular is cofinal. We can thus compute $\mHurm((\fS_\infty\geo)_+)_{\kld{\infty}_\infty}$ also as
\[
 \mHurm((\fS_\infty\geo)_+)_{\kld{\infty}_\infty}\simeq \mathrm{hocolim}_{g\to \infty} \mHurm((\fS_{2g}\geo)_+)_{\kld{2g}_g},
\]
where now we have a sequential colimit, and the stabilisation maps are given by composing two vertical maps and one horizontal map in the previous diagram.

By \cite[Lemma 2.7]{Bianchi:Hur3}, each space $\Hur(\mcR;(\fS_{2g}\geo)_+)_{\kld{2g}_g}$ admits an inclusion into
$\mHurm((\fS_{2g}\geo)_+)_{\kld{2g}_g}$ which is a homotopy equivalence, with inverse an explicit deformation retraction, which in particular is natural in the PMQ. Using this together with Theorem \ref{thm:main3}, we obtain that each space $\mHurm((\fS_{2g}\geo)_+)_{\kld{2g}_g}$ is homotopy equivalent to the corresponding moduli space $\fM_{g,1}$. 

Moreover, for each $g\ge1$, the following diagram commutes up to homotopy:
{\small
\[
\begin{tikzcd}[column sep=7pt]
 \!\!\mHurm((\fS_{2g}\geo)_+)_{\kld{2g}_g}\! \ar[d,"\bl^2_{(1,2)}"] & \!\Hur(\mcR;(\fS_{2g}\geo)_+)_{\kld{2g}_g}\!\ar[l,hook,"\simeq"']
 \ar[r,"\bc","\cong"']  & 
 \ccO_{g,1}[2g] \ar[r,"\forg_{\ccO}","\simeq"'] & \fM_{g,1}\! &\fM_{g,1}^\del \ar[l,"\simeq"']\ar[d,"\stab_1"']\\
 \!\!\mHurm((\fS_{2g}\geo)_+)_{\kld{2g}_{g+1}}\! \ar[d,"\br_{(2g,2g+1)}"] & \!\Hur(\mcR;(\fS_{2g}\geo)_+)_{\kld{2g}_g}\!\ar[l,hook,"\simeq"'] \ar[r,"\bc","\cong"']  & 
 \ccO_{g+1,1}[2g] \ar[r,"\forg_{\ccO}"] & \fM_{g+1,1}\! &\fM_{g+1,1}^\del \ar[l,"\simeq"']\ar[d,equal]\\
 \!\!\mHurm((\fS_{2g+1}\geo)_+)_{\kld{2g+1}_{g+1}}\! \ar[d,"\br_{(2g+1,2g+2)}"]& \!\Hur(\mcR;(\fS_{2g}\geo)_+)_{\kld{2g}_g}\!\ar[l,hook,"\simeq"'] \ar[r,"\bc","\cong"']  & 
 \ccO_{g+1,1}[2g\!+\!1] \ar[r,"\forg_{\ccO}"] & \fM_{g+1,1}\! &\fM_{g+1,1}^\del \ar[l,"\simeq"']\ar[d,equal]\\
 \!\!\mHurm((\fS_{2g+2}\geo)_+)_{\kld{2g+2}_{g+1}}\! & \!\Hur(\mcR;(\fS_{2g}\geo)_+)_{\kld{2g}_g}\ar[l,hook,"\simeq"']\!\ar[r,"\bc","\cong"'] & 
 \ccO_{g+1,1}[2g\!+\!2] \ar[r,"\forg_{\ccO}","\simeq"'] & \fM_{g+1,1}\! &\fM_{g+1,1}^\del \ar[l,"\simeq"'].
\end{tikzcd}
\]
}
The top and bottom rows are zig-zags of weak equivalences: using that $\fM_{g,1}$ is a complex variety, and in particular is homeomorphic to a CW-complex, we could in fact invert up to homotopy the arrows
$\fM_{g,1}^\del\to\fM_{g,1}$. Similarly, we could invert up to homotopy the inclusions 
$\Hur_+(\mcR;\fS_{2g}\geo)_{\kld{2g}_g}\hookrightarrow \mHurm((\fS_{2g}\geo)_+)_{\kld{2g}_g}$ appearing on left, using the deformation retractions provided by \cite[Lemma 2.7]{Bianchi:Hur3}.

It follows that the homotopy colimit given by the left, vertical columns, for varying $g$, is homotopy equivalent to the homotopy colimit given by the right, vertical columns. In other words we have a weak equivalence
\[
\mathrm{hocolim}_{g\to \infty} \mHurm((\fS_{2g}\geo)_+)_{\kld{2g}_g} \simeq
\mathrm{hocolim}_{g\to \infty} \fM_{g,1}^\del.
\]
Putting together this equivalence and the previous one, we obtain the following lemma.
\begin{lem}
\label{lem:zigzag}
 There is a zig-zag of weak equivalences of topological spaces between $\mHurm((\fS_\infty\geo)_+)_{\kld{\infty}_\infty}$ and 
 $\fM_{\infty,1}$.
\end{lem}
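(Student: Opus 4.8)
The plan is to assemble the lemma from the two weak equivalences that the preceding discussion has prepared: a cofinality reduction of the double-indexed homotopy colimit to a sequential one along the diagonal, followed by a levelwise zig-zag of weak equivalences identifying this diagonal with the tower defining $\fM_{\infty,1}$. Concretely, the goal is to produce the chain
\[
\mHurm((\fS_\infty\geo)_+)_{\kld{\infty}_\infty}\simeq\mathrm{hocolim}_{g\to\infty}\mHurm((\fS_{2g}\geo)_+)_{\kld{2g}_g}\simeq\mathrm{hocolim}_{g\to\infty}\fM_{g,1}^\del=\fM_{\infty,1},
\]
where the last identification is just the definition of $\fM_{\infty,1}$ as the homotopy colimit of the genus-stabilisation tower.

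First I would make the cofinality step precise. The main diagram is a diagram over the poset $\set{(d,g)\,:\,d\ge2,\,g\ge0}$ with the product order, with horizontal maps $\bl^2_{(1,2)}$ raising $g$ and vertical maps $\br_{(d,d+1)}$ raising $d$. The highlighted diagonal $\set{(2g,g)\,:\,g\ge1}$ is cofinal: given any $(d,g_0)$, choosing $g\ge\max(g_0,\lceil d/2\rceil)$ yields $(2g,g)\ge(d,g_0)$, and each under-poset is totally ordered, hence has contractible nerve. Homotopy invariance of $\mathrm{hocolim}$ under passage to a homotopy-cofinal subposet then gives the first equivalence above, in which the sequential stabilisation map is the composite of one horizontal map $\bl^2_{(1,2)}$ and the two vertical maps $\br_{(2g,2g+1)}$, $\br_{(2g+1,2g+2)}$.

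Next I would identify each diagonal term with a moduli space via the zig-zag
\[
\mHurm((\fS_{2g}\geo)_+)_{\kld{2g}_g}\xleftarrow{\ \simeq\ }\Hur(\mcR;(\fS_{2g}\geo)_+)_{\kld{2g}_g}\xrightarrow{\ \bc\ }\ccO_{g,1}[2g]\xrightarrow{\ \forg_{\ccO}\ }\fM_{g,1}\xleftarrow{\ \simeq\ }\fM_{g,1}^\del,
\]
whose arrows are all weak equivalences: the inclusion on the left is a homotopy equivalence with natural deformation-retraction inverse by \cite[Lemma 2.7]{Bianchi:Hur3}; $\bc$ is a homeomorphism by Theorem \ref{thm:main3} (valid since $d=2g\ge2$ for $g\ge1$); $\forg_{\ccO}$ is a homotopy equivalence by Theorem \ref{thm:main1} (valid since $d=2g\ge2g+n-1$ for $n=1$); and $\fM_{g,1}^\del\to\fM_{g,1}$ is the weak equivalence recalled in the previous subsection. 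The crux is then the homotopy-commutativity of the large ladder displayed above, whose left column realises the diagonal stabilisation map and whose right column realises the genus stabilisation $\stab_1$. This is the step I expect to be the main obstacle, since it is precisely where the two a priori unrelated stabilisation procedures must be reconciled: one has to track the geometric effect of $\bl^2_{(1,2)}$ and of the $\br$-maps on branched covers and match it, through $\bc$ and $\forg_{\ccO}$, with sewing on the fixed genus-one piece $(\Sigma_{1,2}^\del,\bar\fr_{1,2})$, checking that the squares commute up to the homotopies furnished by naturality of the deformation retractions and of $\forg_{\ccO}$.

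Granting the ladder, each square is a homotopy-commutative square of weak equivalences, so the induced map of sequential homotopy colimits is a weak equivalence, giving the second equivalence in the chain above. Composing it with the cofinality equivalence then produces the desired zig-zag of weak equivalences between $\mHurm((\fS_\infty\geo)_+)_{\kld{\infty}_\infty}$ and $\fM_{\infty,1}$, which completes the proof.
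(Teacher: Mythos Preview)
Your proposal follows the same strategy as the paper: reduce the double-indexed homotopy colimit to the diagonal by cofinality, then compare the diagonal sequence with the $\fM_{g,1}^\del$-tower via the levelwise zig-zag and the homotopy-commutative ladder displayed before the lemma.

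One small inaccuracy worth flagging: it is not true that \emph{every} row of the four-row ladder is a zig-zag of weak equivalences. In the second row, the map $\forg_{\ccO}\colon\ccO_{g+1,1}[2g]\to\fM_{g+1,1}$ need not be a weak equivalence, since Theorem~\ref{thm:main1} requires $d\ge 2g'+n-1$, and for genus $g'=g+1$, $n=1$, $d=2g$ this reads $2g\ge 2g+1$, which fails. This does not damage the argument: only the top and bottom rows of each block (where $d=2g$ and $d=2g+2$ match the genus) must be zig-zags of weak equivalences; together with the homotopy-commutativity of all squares, that is enough to conclude that the induced map of sequential homotopy colimits along the diagonal is a weak equivalence. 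The paper signals exactly this by labelling only the top and bottom instances of $\forg_{\ccO}$ with ``$\simeq$''. With that correction, your argument matches the paper's.
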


Our final goal is to prove the existence of a homology equivalence
\[
 \mHurm((\fS_\infty\geo)_+)_{\kld{\infty}_\infty}\to \Omega^2_0\bB_\infty.
\]
\begin{nota}
For $d\ge2$ we denote by $\mHurm((\fS_d\geo)_+)_{\kld{d}_\infty}$ the homotopy colimit of the $d$\textsuperscript{th} row of the main diagram, i.e.
\[
\mHurm((\fS_d\geo)_+)_{\kld{d}_\infty}:=\mathrm{hocolim}_{g\to \infty}\mHurm((\fS_d\geo)_+)_{\kld{d}_g}.
\]
\end{nota}
We can then write $\mHurm((\fS_\infty\geo)_+)_{\kld{\infty}_\infty}$ as $\mathrm{hocolim}_{d\to\infty}\mHurm((\fS_d\geo)_+)_{\kld{d}_\infty}$.
Similarly,
$\Omega^2_0\bB_\infty$ is homotopy equivalent to the homotopy colimit
\[
\Omega^2_0\bB_\infty\simeq\mathrm{hocolim}_{d\to\infty}\Omega^2_0 \Hur_+(\cR,\del\cR;\fS_d\geo,\fS_d)_{\one},
\]
where we use the maps induced on double loop spaces by the inclusions of spaces
$\Hur_+(\cR,\del\cR;\fS_d\geo,\fS_d)_{\one}\hookrightarrow\Hur_+(\cR,\del\cR;\fS_{d+1}\geo,\fS_{d+1})_{\one}$,
which in turn are induced by the inclusions
of PMQ-group pairs $(\fS_d\geo,\fS_d)\hookrightarrow(\fS_{d+1}\geo,\fS_{d+1})$.

The proof of \cite[Theorem 4.1]{Bianchi:Hur3} is natural in the PMQ-group pair: hence we can replace each term 
$\Omega^2_0 \Hur_+(\cR,\del\cR;\fS_d\geo,\fS_d)_{\one}$ in the last homotopy colimit
by the homotopy equivalent term $\Omega_0B\mHurm((\fS_d\geo)_+)$, and thus obtain a homotopy
equivalence
\[
\Omega^2_0\bB_\infty\simeq \mathrm{hocolim}_{d\to\infty} \Omega^2_0 \Hur_+(\cR,\del\cR;\fS_d\geo,\fS_d)_{\one} \simeq \mathrm{hocolim}_{d\to\infty} \Omega_0B\mHurm((\fS_d\geo)_+)
\]

We would like therefore to prove the existence of a homology equivalence
\[
 \Xi\colon \mHurm((\fS_\infty\geo)_+)_{\kld{\infty}_\infty}\to \mathrm{hocolim}_{d\to\infty} \Omega_0B\mHurm(\fS_d\geo).
\]
In order to do so, we will prove the following proposition.
\begin{prop}
 \label{prop:Xid}
There exist a family of homology equivalences
\[
 \Xi_d\colon \mHurm((\fS_d\geo)_+)_{\kld{d}_\infty}\to \Omega_0 B\mHurm((\fS_d\geo)_+),\quad\quad d\ge2,
\]
such that for all $d\ge2$ the following square commutes up to homotopy
\[
 \begin{tikzcd}
  \mHurm((\fS_d\geo)_+)_{\kld{d}_\infty}\ar[r,"\Xi_d"] \ar[d] &\Omega_0 B\mHurm((\fS_d\geo)_+) \ar[d]\\
  \mHurm((\fS_{d+1}\geo)_+)_{\kld{d+1}_\infty}\ar[r,"\Xi_{d+1}"]& \Omega_0 B\mHurm((\fS_{d+1}\geo)_+).\\
 \end{tikzcd}
\] 
\end{prop}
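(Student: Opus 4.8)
The plan is to construct each $\Xi_d$ as the group-completion map of the topological monoid $M_d:=\mHurm((\fS_d\geo)_+)$, restricted to the relevant telescope, and then to verify that the resulting maps are compatible with the stabilisation in $d$. Recall that $\pi_0(M_d)\cong\widehat{\fS_d\geo}$ and that the multiplication of $M_d$ is homotopy-commutative (it underlies a braided, indeed $E_2$, structure on the Hurwitz--Ran monoid), so that the Pontryagin ring $H_*(M_d)$ is graded-commutative. First I would record that, since $\Omega BM_d$ is a group-like $H$-space, applying $\Omega B$ to the left-multiplication maps $\bl^2_{(1,2)}$ turns them into translations by $[\widehat{(1,2)}^2]$ between components of $\Omega BM_d$, hence into homotopy equivalences. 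Consequently the group-completion maps $M_{d,\kld{d}_g}\to\Omega B M_d$ assemble, over the telescope $\mHurm((\fS_d\geo)_+)_{\kld{d}_\infty}=\mathrm{hocolim}_g M_{d,\kld{d}_g}$, into a single map landing in one component; composing with the self-homotopy-equivalence of $\Omega BM_d$ given by translation by $-[\hat\lc_d]$ (using the loop-space structure), I would define $\Xi_d$ with target the basepoint component $\Omega_0 BM_d$.

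For the homology-equivalence statement I would invoke the group-completion theorem \cite{McDuffSegal,FM94}, which applies because $H_*(M_d)$ is commutative and yields $H_*(\Omega BM_d)\cong H_*(M_d)[\pi_0(M_d)^{-1}]$, together with \cite[Theorem 4.1]{Bianchi:Hur3}, which identifies $\Omega_0 BM_d\simeq\Omega^2_0\Hur_+(\cR,\del\cR;\fS_d\geo,\fS_d)_{\one}$. The point to check is that the telescope obtained by iterating $\bl^2_{(1,2)}$, i.e. by inverting only the single transposition $\widehat{(1,2)}$, already computes the homology of the basepoint component, rather than a proper partial localisation $H_*(M_d)[\widehat{(1,2)}^{-1}]$. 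This cofinality is the substantive input and is discussed as the main obstacle below.

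Naturality in $d$ is the genuinely new verification. The left vertical map in the square is $\mathrm{hocolim}_g$ of $\br_{(d,d+1)}\circ\iota$, where $\iota$ is induced by the inclusion of PMQ-group pairs $(\fS_d\geo,\fS_d)\hookrightarrow(\fS_{d+1}\geo,\fS_{d+1})$; here I would use that $(1,2)$ and $(d,d+1)$ are disjoint for $d\ge3$, so that $\bl^2_{(1,2)}$ commutes with both $\iota$ and $\br_{(d,d+1)}$ and the construction passes to the colimit over $g$. Since the group-completion map is natural for maps of monoids and \cite[Theorem 4.1]{Bianchi:Hur3} is natural in the PMQ-group pair, the $\iota$-part matches the right vertical $\Omega B\iota$; the extra factor $\br_{(d,d+1)}$ becomes, after $\Omega B$, translation by $[\widehat{(d,d+1)}]$ on $\Omega BM_{d+1}$. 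The square then commutes up to homotopy by the bookkeeping identity $\hat\lc_{d+1}=\hat\lc_d\cdot\widehat{(d,d+1)}$ in $\widehat{\fS_{d+1}\geo}$: the translation $-[\hat\lc_{d+1}]+[\widehat{(d,d+1)}]=-[\hat\lc_d]$ used to land in $\Omega_0 BM_{d+1}$ is exactly the one intertwined by $\Omega B\iota$ with the translation $-[\hat\lc_d]$ used on the source, so that the two composites agree after using that translations on the loop space compose up to homotopy.

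The hard part will be the cofinality issue flagged above: the classical group-completion theorem only identifies $H_*(\Omega BM_d)$ with the localisation at \emph{all} of $\pi_0(M_d)$, whereas the telescope defining $\mHurm((\fS_d\geo)_+)_{\kld{d}_\infty}$ stabilises only by powers of the single transposition $\widehat{(1,2)}$. Showing that this single-transposition stabilisation is homologically cofinal for the basepoint component---equivalently, that multiplication by $\widehat{(1,2)}^2$ eventually realises the full group-completed homology---is a homological-stability-type statement, and I expect it to rest on the structure of $\widehat{\fS_d\geo}$ and on the proof of \cite[Theorem 4.1]{Bianchi:Hur3} being arranged precisely so that stabilisation by one transposition suffices. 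A secondary, more bookkeeping-level difficulty is ensuring that the translations chosen to normalise each $\Xi_d$ into the $\Omega_0$ component are coherent across $d$; this is resolved by the $\pi_0$-computation above, but it must be carried out with the loop-space $H$-structure to guarantee that the homotopies compose.
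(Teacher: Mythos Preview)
Your overall architecture---define $\Xi_d$ via the canonical group-completion map, normalise into $\Omega_0$ by translating by $-[\hat\lc_d]$, and obtain naturality in $d$ from the identity $\hat\lc_{d+1}=\hat\lc_d\cdot\widehat{(d,d+1)}$---matches the paper's. The naturality verification you sketch is essentially the argument at the end of Subsection~\ref{subsec:maptel}: one uses the general naturality of $\Xi_{M,a;\bar m}$ in $(M,a,\bar m)$ together with the triangle for $\br^{\Tel}_b$ with $b=(1,\fc_{(d,d+1)})$. One small correction: the monoid $\mHurm((\fS_d\geo)_+)$ is \emph{not} $E_2$ (its $\pi_0\cong\widehat{\fS_d\geo}$ is non-commutative for $d\ge3$); it is only \emph{weakly braided}, with braiding that twists by conjugation. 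This distinction matters for what follows.

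The genuine gap is exactly the cofinality issue you flag, and your proposed resolution is misdirected. The fix does not come from the proof of \cite[Theorem~4.1]{Bianchi:Hur3}, nor from homological stability. The paper's argument (Subsection~\ref{subsec:propagators}) is purely combinatorial and uses the weak braiding concretely. One introduces a second element
\[
e'=(1,\fc_{(1,2)})^2(1,\fc_{(2,3)})^2\cdots(1,\fc_{(d-1,d)})^2,
\]
which, unlike $e=(1,\fc_{(1,2)})^{2d-2}$, \emph{is} a propagator for $\mHurm_+$: by \cite[Proposition~7.11]{Bianchi:Hur1}, $\hat\totmon(e')=(\one;\{1,\dots,d\};2d-2)$ admits a factorisation beginning with any transposition $\hat\tr$. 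The group-completion theorem then applies directly to the telescope $\Tel(\mHurm_+,a;e')$. The key step is Lemma~\ref{lem:braiding}: on the component $\mHurm_{+,\kld{d}_g}$, the maps $\bl^2_{(j,j+1)}$ and $\bl^2_{(j+1,j+2)}$ are homotopic, because the weak braiding $\braiding^2$ conjugates by the $\tilde\fS_d$-valued total monodromy $(d-1+2g,\lc_d)$, and $(j+1,j+2)^{\lc_d}=(j,j+1)$. Chaining these homotopies gives $e\cdot-\simeq e'\cdot-$ as self-maps of the relevant components, so the two telescopes have the same homology; finally $e=\bar m^{d-1}$ makes $\Tel(\mHurm_+,a;e)$ cofinal in $\Tel(\mHurm_+,a;\bar m)$. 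Without this propagator-swap idea, there is no reason that inverting only $\widehat{(1,2)}$ should compute the full localisation: indeed $\hat\totmon(e)=(\one;\{1,2\},\{3\},\dots,\{d\};2d-2,0,\dots,0)$ differs from $\hat\totmon(e')$ in $\widehat{\fS_d\geo}$, so the argument genuinely exploits that one is working over the specific components $\kld{d}_g$.
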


\subsection{Mapping telescopes}
\label{subsec:maptel}
Let $M$ be a unital topological monoid, and let $\bar m\in M$ be an element. We can consider the mapping telescope
\[
 \Tel(M;\bar m):=\mathrm{hocolim}\pa{M\overset{\bar m\cdot-}{\to} M\overset{\bar m\cdot-}{\to}M \overset{\bar m\cdot-}{\to}\dots},
\]
obtained by iterating left multiplication by $\bar m$;
the argument of the proof of
the group-completion theorem \cite{McDuffSegal,FM94} provides a map $\Xi_{M;\bar m}$
from $\Tel(M;\bar m)$ to $\Omega BM$. First, one defines a map $\bs\colon M\to \Omega BM$,
sending $m\in M$ to the loop in $BM$ represented by the 1-simplex $m$; then one defines
$\Xi_{M;\bar m}$ as the map induced on homotopy colimits of the rows of the following diagram,
where squares can be filled by homotopies in a way that is natural in $M$ and $\bar m$:
\[
\begin{tikzcd}[column sep=40pt]
M\ar[r,"\bar m\cdot-"]\ar[d,"\bs(-)"] & 
M\ar[r,"\bar m\cdot-"]\ar[d,"{\bs(\bar m)^{-1}\cdot\bs(-)}"] & M \ar[r,"\bar m\cdot-"]\ar[d,"{\bs(\bar m^2)^{-1}\cdot\bs(-)}"] & \dots\\
\Omega BM\ar[r,equal] &\Omega BM\ar[r,equal] &\Omega BM\ar[r,equal] &\dots.
\end{tikzcd}
\]
Here, for $m\in M$, the loop $\bs(m)^{-1}\in\Omega BM$ is the inverse of the loop $\bs(m)$, and a representative $-\cdot-\colon\Omega BM\times\Omega BM\to\Omega BM$ is fixed.

We remark that, in order to prove that the map $\Xi_{M;\bar m}$ is a homology
equivalence, one requires additional conditions on $M$ and $\bar m$; but the map $\Xi_{M;\bar m}$ itself can be defined in general.

If we fix an element $a\in M$, representing a path component $\pi_0(a)\in\pi_0(M)$,
we can also restrict the previous to a map
\[
 \Tel(M,a;\bar m):=\mathrm{hocolim}\pa{M_{\pi_0(a)}\overset{\bar m\cdot-}{\to} M_{\pi_0(\bar m a)}\overset{\bar m\cdot-}{\to}M_{\pi_0(\bar m^2a)}\overset{\bar m\cdot-}{\to}\!\dots\!}\!\to\Omega_{\pi_0(a)}BM.
\]
We can then postcompose the previous with the map $\Omega_{\pi_0(a)}BM\overset{\simeq}{\to}\Omega_0BM$
given by right multiplication by $\bs(a)^{-1}$.
We obtain a map
\[
 \Xi_{M,a;\bar m}\colon \Tel(M,a;\bar m)\to \Omega_0BM.
\]
The construction above is natural with respect to triples $(M,a,\bar m)$ consisting of a topological monoid with two chosen elements. Moreover, if $b\in M$ is another element of $M$,
we can consider the map $\br^{\Tel}_b\colon  \Tel(M,a;\bar m)\to  \Tel(M,ab;\bar m)$ induced levelwise by right multiplication by $b$: then the following diagram commutes up to a homotopy
which is again natural in the quadruple $(M,a,\bar m,b)$:
\[
 \begin{tikzcd}
   \Tel(M,a;\bar m)\ar[r,"\br^{\Tel}_b"] \ar[dr,"\Xi_{M,a;\bar m}"'] &  \Tel(M,ab;\bar m)\ar[d,"\Xi_{M,ab;\bar m}"]\\
   & \Omega_0BM.
 \end{tikzcd}
\]
In homology, we have a canonical identification of $H_*(\Tel(M,a;\bar m))$ with the colimit
\[
\colim\pa{H_*(M_{\pi_0(a)})\overset{\bar m\cdot-}{\to} H_*(M_{\pi_0(\bar ma)})\overset{\bar m\cdot-}{\to}H_*(M_{\pi_0(\bar m^2a)})\overset{\bar m\cdot-}{\to}\dots},
\]
where we consider $\bar m\cdot -$ as the left multiplication by the ``ground class''
$\bar m$ in $H_0(M_{\pi_0(\bar m)})$, using the Pontryagin ring structure on $H_*(M)$.

The map $H_*\pa{\Tel(M,a;\bar m)}\to H_*(\Omega_0BM)$ induced by $\Xi_{M,a;\bar m}$ coincides with the map induced on colimits of rows by the following diagram:
\[
\begin{tikzcd}[column sep =45pt]
H_*(M_{\pi_0(a)})\ar[r,"\bar m\cdot-"]\ar[d,"{\bs(a)^{-1}\cdot\bs(-)}"] & H_*(M_{\pi_0(\bar ma) })\ar[r,"\bar m\cdot-"] \ar[d,"{\bs(\bar m a)^{-1}\cdot\bs(-)}"]  &
H_*(M_{\pi_0(\bar m^2a)})\ar[r,"\bar m\cdot-"] \ar[d,"{\bs(\bar m^2a)^{-1}\cdot\bs(-)}"]  &\dots \\
H_*(\Omega_0BM)\ar[r,equal] &H_*(\Omega_0BM)\ar[r,equal]  &H_*(\Omega_0BM)\ar[r,equal] &\dots
\end{tikzcd}
\]

We can now define $\Xi_d$ to be the map $\Xi_{M,a;\bar m}$ corresponding
to the data:
\begin{itemize}
 \item $M=\mHurm((\fS_d\geo)_+)$;\vspace{.1cm}
 \item $a=\pa{1,\fc_{(1,2)}}\pa{1,\fc_{(2,3)}}\dots\pa{1,\fc_{(d-1,d)}}$;\vspace{.1cm}
 \item $\bar m=\pa{1,\fc_{(1,2)}}^2$.
\end{itemize}
Note that $\mHurm((\fS_d\geo)_+)_{\kld{d}_\infty}$ is defined as the same homotopy colimit as $\Tel_{M,a;\bar m}$, for $(M,a,\bar m)$ as above.

The diagram in the statement of Proposition \ref{prop:Xid} commutes by considering the inclusion of monoids $\mHurm((\fS_d\geo)_+)\hookrightarrow\mHurm((\fS_{d+1}\geo)_+)$ together with the map $\br^{\Tel}_b$
induced by the element $b=(1,\fc_{(d,d+1)})$.

In order to prove Proposition \ref{prop:Xid}, and hence Theorem \ref{thm:main4}, it is only left to prove that $\Xi_d$ is a homology equivalence.

\subsection{Propagators}
\label{subsec:propagators}
In this subsection, we fix $d\ge2$ and aim at proving that $\Xi_d$ is a homology equivalence. We abbreviate $\mHurm((\fS_d\geo)_+)$ by $\mHurm_+$ for simplicity (note that this notation is not compatible with a similar notation
used in \cite{Bianchi:Hur3}, for example in \cite[Theorem 2.19]{Bianchi:Hur3}).
We denote $a=\pa{1,\fc_{(1,2)}}\pa{1,\fc_{(2,3)}}\dots\pa{1,\fc_{(d-1,d)}}$ as at the end of the previous subsection.
\begin{nota}
\label{nota:propagators}
 We denote by $e$ the element $\pa{1,\fc_{(1,2)}}\dots \pa{1,\fc_{(1,2)}}\in\mHurm_+$, where $\pa{1,\fc_{(1,2)}}$ is repeated $2d-2$ times; we consider $\pi_0(e)\in\pi_0(\mHurm_+)$,
 and denote by $\hat\totmon(e)$ the $\widehat{\fS_d\geo}$-valued total monodromy of $e$. We denote by $\pi_0(e)$ also the corresponding ``ground'' class
 in $H_0\pa{\mHurm_{+,\hat\totmon(e)}}\cong\Z$. 
 Similarly, we denote by $e'\in\mHurm_+$ the element
 \[
 e'=\pa{1,\fc_{(1,2)}}\pa{1,\fc_{(1,2)}}\pa{1,\fc_{(2,3)}}\pa{1,\fc_{(2,3)}}\dots \pa{1,\fc_{(d-1,d)}}\pa{1,\fc_{(d-1,d)}},
 \]
 and consider $\pi_0(e')$ also as an element in $H_0\pa{\mHurm_{+,\hat\totmon(e')}}\cong\Z$.
\end{nota}
Notice that $\hat\totmon(e)=(\one;\set{1,2},\set{3},\dots,\set{d};2d-2,0,\dots,0)$, using the notation of \cite[Proposition 7.13]{Bianchi:Hur1}, whereas $\hat\totmon(e')=(\one;\set{1,\dots,d};2d-2)$: these are different elements in $\widehat{\fS_d\geo}$.
Recall from \cite[Lemma 7.2]{Bianchi:Hur1} that the enveloping group $\cG(\fS_d\geo)$ of $\fS_d\geo$ is the index two subgroup $\tilde\fS_d\subset\Z\times\fS_d$ containing pairs $(r,\sigma)$ with $r$ and $\sigma$ of the same parity. We remark that $\hat\totmon(e)$ and $\hat\totmon(e')$
are sent to the same element $(2d-2,\one)\in\tilde\fS_d\cong \cG(\fS_d)$ under the natural map
$\widehat{\fS_d\geo}\to\cG(\fS_d\geo)$. We thus have
$\totmon(e)=\totmon(e')=(2d-2,\one)$, i.e. $\tilde\fS_d$-valued total monodromies of $e$ and $e'$ are equal.

There is yet another, more relevant difference between $e$ and $e'$: the element $e'$ is a \emph{propagator} of the topological monoid
$\mHurm_+$, whereas $e$ is not, in the following sense.
\begin{defn}
Let $M$ be a weakly braided topological monoid (see \cite[Definition 3.5]{Bianchi:Hur3}) and let $m\in M$. We say that $m$ is a \emph{propagator} if for every $x\in M$ there exist $y\in M$ and $k\ge1$ such that $x\cdot y$ and $m^k$ belong to the same path component of $M$. 
\end{defn}
The fact that $e'$ is a propagator follows from the fact that $\pi_0(\mHurm_+)\cong\widehat{\fS_d\geo}$
is generated by the elements $\widehat\tr$ corresponding to transpositions $\tr\in\fS_d\geo$; for any fixed transposition $\tr$, the element $\pi_0(e')\in\pi_0(\mHurm_+)$ admits a factorisation $\hat\tr_1\dots\hat\tr_{2d-2}$ in $\widehat{\fS_d\geo}$, with $\tr_1=\tr$, see \cite[Proposition 7.11]{Bianchi:Hur1}.
The group completion theorem ensures that the map
\[
  \Xi_{\mHurm_+,a;e'}\colon \Tel(\mHurm_+,a;e') \to \Omega_0B\mHurm_+
\]
is a homology equivalence. We then note the following fact.
\begin{lem}
\label{lem:braiding}
 Let $g\ge0$ and consider the two maps
 \[
 e\cdot-\,,\,e'\cdot-\colon \mHurm_{+,\kld{d}_g}\to\mHurm_{+,\kld{d}_{g+d-1}};
 \]
 given by left multiplication by $e$ and $e'$ respectively. Then $e\cdot-$ and $e'\cdot-$ are homotopic maps.
\end{lem}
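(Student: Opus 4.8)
The plan is to connect $e\cdot-$ and $e'\cdot-$ by a homotopy built entirely out of the braiding of the weakly braided monoid $\mHurm_+=\mHurm((\fS_d\geo)_+)$, depending on $x$ only through its $\fS_d$-valued total monodromy, which is constant on the component $\kld{d}_g$. First I would record the two structural facts that make this possible. Both $e$ and $e'$ are products of $d-1$ \emph{pairs} of the form $(1,\fc_\tr)(1,\fc_\tr)$ for a transposition $\tr$, and each such pair has $\fS_d$-valued total monodromy $\tr\cdot\tr=\one$; on the other hand every $x\in\mHurm_{+,\kld{d}_g}$ has $\fS_d$-valued total monodromy equal to the image of $\kld{d}_g$ in $\fS_d$, which by Definition \ref{defn:klud} is $\lc_d$. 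This already shows $\hat\totmon(e)\cdot\kld{d}_g=\hat\totmon(e')\cdot\kld{d}_g=\kld{d}_{g+d-1}$: both left multiplications raise the norm by $2d-2$, fix the total $\fS_d$-monodromy at $\lc_d$, and keep the trivial partition $\set{1,\dots,d}$ of $\kld{d}_g$; so both maps indeed land in the claimed component, and it suffices to homotope them there.

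The core mechanism is to send one trivial-monodromy pair $(1,\fc_\tr)(1,\fc_\tr)$, sitting to the left of $x$, on a loop that encircles the whole of $x$ and returns it to its original position. Under such a loop the labels of the pair are conjugated by $\totmon(x)=\lc_d$ (one conjugation per turn), while $x$ is left unchanged, precisely because the pair carries trivial $\fS_d$-monodromy $\one$ and therefore induces the trivial conjugation on the points it encircles. Since $\lc_d=(1,2,\dots,d)$ satisfies
\[
\lc_d^{\,k-1}\,(1,2)\,\lc_d^{-(k-1)}=(k,k+1),\qquad 1\le k\le d-1,
\]
circling the $k$-th pair of $e$ a total of $k-1$ times around $x$ turns $(1,\fc_{(1,2)})(1,\fc_{(1,2)})$ into $(1,\fc_{(k,k+1)})(1,\fc_{(k,k+1)})$, with the pair back in its $k$-th slot and $x$ untouched. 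Performing these circlings for $k=2,\dots,d-1$ (one pair at a time, or along nested loops, noting that passing through the other trivial pairs conjugates by $\one$ and so has no effect) carries the configuration $e\cdot x$ to $e'\cdot x$. Because each circling motion is specified only by ``enclose $x$ once more'', independently of the internal structure of $x$, the resulting concatenation of braiding homotopies is continuous and natural in $x$ and stays inside $\mHurm_{+,\kld{d}_{g+d-1}}$, giving the desired homotopy $e\cdot-\simeq e'\cdot-$.

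The main obstacle is to make ``circle a pair around $x$'' into a bona fide homotopy of maps in the monoid model, uniformly in $x$. Here I would work through the identifications $\Hur^{\Delta}(\fS_d\geo)\cong\Hur(\mcR,\cdot)\cong\Hur(\C,\cdot)$ of Subsection \ref{subsec:HurC} and the deformation retraction of \cite[Lemma 2.7]{Bianchi:Hur3}, in which configurations live in $\C$ and encircling loops are unobstructed, and then transport the construction back to $\mHurm_+$; the widths of the configurations $x$ vary with $x$, so the enclosing loops must be chosen to depend continuously on the width, and one must check that the braiding homotopy thus produced is exactly the one furnished abstractly by the weakly braided structure $\braiding$ of \cite[Definition 3.5]{Bianchi:Hur3}. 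Once the circling moves are identified with iterated braidings and the conjugation formula above is in hand, the verification that the endpoints are $e\cdot x$ and $e'\cdot x$ and that the homotopy respects the component is routine.
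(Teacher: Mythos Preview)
Your approach is correct and is essentially the same as the paper's: both exploit that the pairs $(1,\fc_\tr)^2$ have trivial $\fS_d$-valued monodromy, so braiding such a pair past $x$ conjugates the pair's label by $\totmon(x)=\lc_d$ while leaving $x$ unchanged, and iterating yields the transformation $(1,2)\leadsto(k,k+1)$.

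The paper organises this more economically. Rather than ``circling the $k$-th pair $k-1$ times around $x$'', it reduces to the atomic statement that $\bl^2_{(j,j+1)}$ and $\bl^2_{(j+1,j+2)}$ are homotopic as maps $\mHurm_{+,\kld{d}_{g'}}\to\mHurm_{+,\kld{d}_{g'+1}}$ for every $g'$, and then chains these homotopies. Each atomic step is obtained by applying the weak braiding $\braiding$ \emph{twice} to the pair $\bigl((1,\fc_{(j+1,j+2)})^2,\ x\bigr)$: the iterated braiding formula gives $(\fc,\fc')\mapsto(\fc^{\totmon(\fc')},(\fc')^{\totmon(\fc)^{\totmon(\fc')}})$, and since $\totmon\bigl((1,\fc_{(j+1,j+2)})^2\bigr)=(2,\one)$ acts trivially while $\totmon(x)$ projects to $\lc_d$, this yields exactly $\bl^2_{(j,j+1)}\simeq\bl^2_{(j+1,j+2)}$. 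Crucially, this is a purely algebraic manipulation with the abstract $\braiding$ from \cite[Lemma 3.6]{Bianchi:Hur3}; there is no need to pass through $\Hur(\C,\cdot)$, no geometric ``enclosing loops'', and no dependence on the width of $x$. What you flag as the ``main obstacle'' simply does not arise in the paper's treatment, so your last paragraph can be dropped in favour of a direct appeal to the iterated braiding formula.
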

\begin{proof}
 We can write $e\cdot-$ as the composition $\bl_{(1,2)}^{2d-2}$, whereas $e'\cdot-$ can be written as composition $\bl_{(1,2)}^2\bl_{(2,3)}^2\dots\bl_{(d-1,d)}^2$. It suffices therefore to prove, for all $g\ge0$, that for all $1\le j\le d-2$ the maps
 $\bl_{(j,j+1)}^2$ and $\bl_{(j+1,j+2)}^2$ are homotopic maps $\mHurm_{+,\kld{d}_g}\to\mHurm_{+,\kld{d}_{g+1}}$.
 
 For this, recall from the proof of \cite[Lemma 3.6]{Bianchi:Hur3} that the map 
\[
 \braiding\colon\mHurm\times\mHurm\to\mHurm\times\mHurm,\quad\quad \pa{(t,\fc),(t',\fc')}\mapsto\pa{(t',\fc'),(t,\fc^{\totmon(\fc')})}
 \]
 is a weak braiding, i.e. the composite map
 \[
 \begin{tikzcd}
  \mHurm\times \mHurm\ar[r,"\braiding"] & \mHurm\times \mHurm\ar[r,"-\cdot-"] &\mHurm
 \end{tikzcd}
 \]
is homotopic to the multiplication map $-\cdot-\colon \mHurm\times\mHurm\to \mHurm$. It follows that also the composite
\[
 \begin{tikzcd}
  \mHurm\times \mHurm\ar[r,"\braiding"] &\mHurm\times \mHurm\ar[r,"\braiding"] & \mHurm\times \mHurm\ar[r,"-\cdot-"] &\mHurm,
 \end{tikzcd} 
\]
sending $((t,\fc),(t',\fc'))\mapsto \pa{t,\fc^{\totmon(\fc')}}\cdot \pa{t',(\fc')^{\totmon(\fc)^{\totmon(\fc')}}}$
is homotopic to  $-\cdot-$. The same properties hold for the restricted map $\braiding\colon\mHurm_+\times\mHurm_+\to\mHurm_+\times\mHurm_+$.

We can now set $(t,\fc)=\pa{1,\fc_{(j+1,j+2)}}\cdot\pa{1,\fc_{(j+1,j+2)}}$; using that the element
\[
 \totmon\big((1,\fc_{(j+1,j+2)})(1,\fc_{(j+1,j+2)})\big)=(2,\one)\in\tilde\fS_d=\cG(\fS_d\geo)
\]
conjugates trivially all elements in $\fS_d\geo$,
and using instead that
\[
(j+1,j+2)^{(d-1+2g,\lc_d)}=(j+1,j+2)^{\lc_d}=(j,j+1)\in\fS_d\geo,
\]
we obtain precisely that
the map $\bl^2_{(j,j+1)}=\pa{1,\fc_{(j,j+1)}}\cdot\pa{1,\fc_{(j,j+1)}}\cdot-$ is homotopic to the map
$\bl^2_{(j+1,j+2)}=\pa{1,\fc_{(j+1,j+2)}}\cdot\pa{1,\fc_{(j+1,j+2)}}\cdot-$, when considered as maps
$\mHurm_{+,\kld{d}_g}\to\mHurm_{+,\kld{d}_{g+2}}$.
\end{proof}

We now notice that both $\Xi_{\mHurm_+,a;e'}\colon H_*(\Tel(\mHurm_+,a;e')) \to H_*(\Omega_0B\mHurm_+)$ and $\Xi_{\mHurm_+,a;e}\colon H_*(\Tel(\mHurm_+,a;e)) \to H_*(\Omega_0B\mHurm_+)$ can be described, by Lemma \ref{lem:braiding}, as the map induced on colimits of rows by the following commutative diagram of homology groups, where we use $\hat\totmon(a)=\kld{d}_0$:
\[
\begin{tikzcd}[column sep =30pt, row sep=40pt]
H_*(\mHurm_{+,\kld{d}_0})\ar[r,"e\cdot-=e'\cdot-"]\ar[d,"{\bs(-)\cdot\bs(a)^{-1}}"] & H_*(\mHurm_{+,\kld{d}_{d-1}})\ar[r,"e\cdot-=e'\cdot-"]
\ar[d,"{\bs(-)\cdot\bs(e\cdot a)^{-1}}=", near start]
\ar[d,"{\bs(-)\cdot\bs(e'\cdot a)^{-1}}", near end] &
H_*(\mHurm_{+,\kld{d}_{2d-2}})\ar[r,"e\cdot-=e'\cdot-"]
\ar[d,"{\bs(-)\cdot\bs(e\cdot e\cdot a)^{-1}}=", near start]
\ar[d,"{\bs(-)\cdot\bs(e'\cdot e'\cdot a)^{-1}}", near end]
&\dots \\
H_*(\Omega_0B\mHurm_+)\ar[r,equal] &H_*(\Omega_0B\mHurm_+)\ar[r,equal]  &H_*(\Omega_0B\mHurm_+)\ar[r,equal] &\dots
\end{tikzcd}
\]
Since $\Xi_{\mHurm_+,a;e'}$ induces a homology isomorphism, also
$\Xi_{\mHurm_+,a;e}$ induces a homology isomorphism.
To conclude, let $\bar m= \pa{1,\fc_{(1,2)}}^2$ as in the previous subsection: we note that there is a weak equivalence $\Tel(\mHurm_+,a;e)\overset{\simeq}{\to}\Tel\pa{\mHurm_+,a;\bar m}$,
given by the fact that $e=\bar m^{d-1}$ and hence the first telescope is defined as a homotopy colimit over a cofinal subsequence of the sequence of spaces yielding the second telescope as homotopy colimit. Moreover the following triangle commutes up to homotopy:
\[
 \begin{tikzcd}
  \Tel(\mHurm_+,a;e)\ar[r,"\simeq"] \ar[dr,"{\Xi_{\mHurm_+,a;e}}"']& \Tel\pa{\mHurm_+,a;\bar m}\ar[d,"{\Xi_{\mHurm_+,a;\bar m}}"]\\
  & \Omega_0B\mHurm_+.
 \end{tikzcd}
\]
Since the horizontal and the diagonal arrows are homology equivalences, also the vertical arrow is a homology equivalence.
This concludes the proof of Proposition \ref{prop:Xid}, and hence of Theorem \ref{thm:main4}.

\section{A Hurwitz-Ran model for \texorpdfstring{$\MTSO(2)$}{MTSO(2)}}
\label{sec:MTSO}
In this section we compare Theorem \ref{thm:main4}, asserting that 
$H_*(\fM_{\infty,1})\cong H_*\pa{\Omega^2_0 \bB_\infty}$, with the isomorphism
$H_*(\fM_{\infty,1})\cong H_*\pa{\Omega^\infty_0\MTSO(2)}$ proved
by Madsen and Weiss \cite{MadsenWeiss}. The following is the main result of the section.
\begin{thm}
 \label{thm:main5}
 Let $\bB_\infty$ be the space occurring in the statement of Theorem \ref{thm:main4}.
 There is a weak homotopy equivalence of spaces
 \[
 \bB_\infty\simeq\Omega^{\infty-2}\MTSO(2).\]
\end{thm}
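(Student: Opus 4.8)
The plan is to deduce Theorem~\ref{thm:main5} from the group-completion argument behind Theorem~\ref{thm:main4}, by delooping the Madsen--Weiss identification twice. Concretely, it suffices to construct a map (or zig-zag) $f\colon\bB_\infty\to\Omega^{\infty-2}_0\MTSO(2)$ onto the basepoint component whose double loop $\Omega^2 f$ realises the homology equivalence $\Omega^2_0\bB_\infty\simeq\Omega^\infty_0\MTSO(2)$ coming from Theorem~\ref{thm:main4} together with \cite{MadsenWeiss}. Once such an $f$ is available, the verification that it is a weak equivalence is a short bookkeeping of homotopy groups: since $\Omega^2 f$ is a homology equivalence between components of infinite loop spaces (hence simple spaces), it is a weak equivalence, so $f$ induces isomorphisms on $\pi_m$ for all $m\ge 2$; the degree $m=1$ is covered because $\bB_\infty$ is simply connected (the spaces $\Hur_+(\cR,\del\cR;\fS_d\geo,\fS_d)_{\one}$ are simply connected by \cite[Theorem 4.19]{Bianchi:Hur3}, as already used in Corollary~\ref{cor:mumford}) while $\pi_1\Omega^{\infty-2}_0\MTSO(2)=\pi_{-1}\MTSO(2)=0$; and $\pi_0$ is handled by passing to basepoint components. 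Hence $f$ is a weak equivalence.

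To build $f$ I would first record the delooping structure already implicit in Theorem~\ref{thm:main4}. For each $d$ the Hurwitz--Moore monoid $\mHurm((\fS_d\geo)_+)$ is weakly braided \cite[Definition 3.5]{Bianchi:Hur3}, and \cite[Theorem 4.1]{Bianchi:Hur3}, used in the proof of Theorem~\ref{thm:main4}, identifies $\Omega^2_0\Hur_+(\cR,\del\cR;\fS_d\geo,\fS_d)_{\one}$ with the group completion $\Omega_0 B\mHurm((\fS_d\geo)_+)$; this exhibits $\Hur_+(\cR,\del\cR;\fS_d\geo,\fS_d)_{\one}$ as a two-fold delooping $B^2\mHurm((\fS_d\geo)_+)$ of the braided monoid. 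Since the classifying-space functor commutes with filtered homotopy colimits, this yields $\bB_\infty\simeq B^2\mHurm_\infty$, where $\mHurm_\infty:=\mathrm{hocolim}_{d}\mHurm((\fS_d\geo)_+)$ is the stabilised braided monoid; by Theorem~\ref{thm:main4} its group completion is homology equivalent to $\Omega^\infty_0\MTSO(2)$. Thus $\bB_\infty$ is a priori a double loop space, and $\Omega^2_0\bB_\infty$ is the group completion of $\mHurm_\infty$.

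It remains to deloop the Madsen--Weiss equivalence twice, compatibly with this $B^2$-structure. Here I would use that the Madsen--Weiss identification is itself an infinite loop (in particular $E_2$) equivalence: in the Galatius--Madsen--Tillmann--Weiss description, $\Omega^{\infty-1}\MTSO(2)$ is the classifying space of the symmetric monoidal cobordism category of oriented surfaces, so that $\Omega^{\infty-2}\MTSO(2)$ carries a corresponding $B^2$-presentation. Matching the braided monoid $\mHurm_\infty$ with the surface-cobordism input of $\MTSO(2)$ --- via the assignment sending a Hurwitz configuration to its branched cover, which is the structured counterpart of the maps $\bc$ and $\forg_{\ccO}$ of Theorems~\ref{thm:main3} and \ref{thm:main1} --- produces the desired structured map $f$ and shows that $\Omega^2 f$ is the Madsen--Weiss equivalence.

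The hard part will be precisely this last step: upgrading the branched-cover comparison to a map of double (ideally infinite) deloopings and checking that its double loop agrees on the nose with the homology equivalence of Theorem~\ref{thm:main4}, rather than merely inducing the same isomorphism on homotopy groups. Establishing that the braided structures on $\mHurm((\fS_d\geo)_+)$ stabilise to enough commutativity in $d$ to be compared with the symmetric monoidal cobordism category, and tracking the $\pi_0$/component discrepancy (the target $\Omega^{\infty-2}\MTSO(2)$ has $\pi_0\cong\Z$ while $\bB_\infty$ is connected), are the two points requiring the most care.
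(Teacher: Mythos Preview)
Your overall strategy---construct a map of double deloopings whose $\Omega^2$ recovers the homology equivalence of Theorem~\ref{thm:main4}, then deloop---is exactly the paper's. But there is a genuine gap in your execution: you assert that $\Hur_+(\cR,\del\cR;\fS_d\geo,\fS_d)_{\one}$ is a $B^2$ of the braided monoid $\mHurm((\fS_d\geo)_+)$, and hence that $\bB_\infty\simeq B^2\mHurm_\infty$. This is false for $d\ge3$. The monoid $\pi_0(\mHurm((\fS_d\geo)_+))\cong\widehat{\fS_d\geo}$ is not commutative, and its group completion $\tilde\fS_d\subset\Z\times\fS_d$ is non-abelian; consequently $B\mHurm((\fS_d\geo)_+)$ has non-abelian $\pi_1$ and cannot be a loop space. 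The equivalence from \cite[Theorem~4.1]{Bianchi:Hur3} only matches $\Omega_0B\mHurm$ with $\Omega^2_0\Hur_+(\cR,\del\cR;\dots)_{\one}$---a component-wise statement that does not deloop over the full monoid. Being \emph{weakly braided} in the sense of \cite[Definition~3.5]{Bianchi:Hur3} is strictly weaker than carrying an $E_2$-structure.

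The paper's remedy is to restrict to the sub-$E_1$-algebra $\Hur(\mcR;(\fS_d\geo)_+)_{(\bullet,\one)}$ of configurations whose $\tilde\fS_d$-valued total monodromy has trivial $\fS_d$-component. On this subspace the obstruction vanishes and a genuine $E_2$-structure is constructed by hand (Lemma~\ref{lem:E2structure}); a mapping-telescope comparison then shows that passing to this sub-monoid does not change the homology of the group completion, and a covering-space argument identifies its $B^2$ with $\bB_\infty$. The comparison with $\MTSO(2)$ is made via an explicit map of $E_2$-algebras $\bc_d\colon\Hur(\mcR;(\fS_d\geo)_+)_{(\bullet,\one)}\to\scM_d$ into a moduli space of $d$-little Riemann surfaces, to which Madsen--Weiss applies directly. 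Your branched-cover idea is the right one, but it must be implemented on the $(\bullet,\one)$-subspace, not on the full monoid. (Incidentally, $\Omega^{\infty-2}\MTSO(2)$ is simply connected, so the $\pi_0$-discrepancy you flag does not arise.)
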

We note that both spaces are simply connected:
in particular, by \cite[Theorem 4.19]{Bianchi:Hur3}, $\bB_\infty$ is a sequential homotopy colimit of simply connected spaces.
Therefore it suffices to construct a homology equivalence between the two spaces.
\subsection{\texorpdfstring{$E_2$}{E2}-algebras from Hurwitz-Ran spaces}
For $d\ge2$, the space $\Hur(\mcR;\fS_d\geo)$ can be considered as an $E_1$-algebra,
i.e. an algebra over the operad of little $1$-cubes,
by squeezing horizontally configurations supported in $\mcR=(0,1)^2$ and by embedding them into disjoint sub-rectangles of $\mcR$ of the form
$(t_1,t_2)\times(0,1)$. The Hurwitz-Moore topological monoid $\mHurm(\fS_d\geo)$ is a strictification of the $E_1$-algebra
$\Hur(\mcR;\fS_d\geo)$.

Unfortunately, for $d\ge3$, the monoid $\pi_0(\Hur(\mcR;\fS_d\geo))\cong \pi_0(\mHurm(\fS_d\geo))$ is not commutative, and therefore we cannot upgrade $\Hur(\mcR;\fS_d\geo)$ to an $E_2$-algebra.
\begin{nota}
 For $r\ge1$ we denote by $E_2(r)$ the space of $r$-tuples $(\iota_1,\dots,\iota_r)$ of self-embeddings $\iota_i\colon\mcR\to\mcR$ satisfying the following properties:
 \begin{itemize}
  \item the embeddings are \emph{rectilinear} in the following strong sense: for each $1\le i\le r$ there is $\hat z_i\in\C$ and $\lambda_i>0$ such that $\iota_i(z)=\lambda z+\hat z_i$ for all $z\in\mcR$;
  \item the closures in $\C$ of the open squares $\iota_i(\mcR)$ are pairwise disjoint and are contained in $\mcR$.
 \end{itemize}
\end{nota}
The spaces $E_2(r)$ assemble into a version of the little $2$-cubes operad; however, we insist that a rectilinear embedding squeezes $\mcR$ as much horizontally as vertically, and that boundaries of little $2$-squares and of their ambient square are disjoint.

Recall that the enveloping group $\cG(\fS_d\geo)$ of $\fS_d\geo$ is the index-2 subgroup $\tilde\fS_d\subset\Z\times\fS_d$ containing pairs $(r,\sigma)$ with $r$ and $\sigma$ of the same parity \cite[Lemma 7.2]{Bianchi:Hur1}.
Recall also that the $\widehat{\fS_d\geo}$-valued total monodromy $\hat\totmon\colon\Hur(\mcR;\fS_d\geo)\to \widehat{\fS_d\geo}$
can be postcomposed with the natural map of PMQs $\widehat{\fS_d\geo}\to\cG(\fS_d\geo)$, yielding the 
$\tilde\fS_d$-valued total monodromy $\omega\colon\Hur(\mcR;\fS_d\geo)\to\tilde\fS_d$.

\begin{defn}
We denote by $\Hur(\mcR;\fS_d\geo)_{(\bullet,\one)}\subset\Hur(\mcR;\fS_d\geo)$ the subspace of configurations $(P,\psi)$
whose $\tilde\fS_d\geo$-valued total monodromy has the form $(h,\one)$ for some even integer $h\in2\Z$.
\end{defn}
Note that $\Hur(\mcR;\fS_d\geo)_{(\bullet,\one)}$ is a union of connected components of $\Hur(\mcR;\fS_d\geo)$,
and it is a sub-$E_1$-algebra of $\Hur(\mcR;\fS_d\geo)$.
\begin{lem}
\label{lem:E2structure}
 The $E_1$-algebra structure on $\Hur(\mcR;\fS_d\geo)_{(\bullet,\one)}$ can be extended to an $E_2$-algebra structure.
\end{lem}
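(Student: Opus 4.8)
The plan is to construct an honest action of the operad $\set{E_2(r)}$ on $\Hur(\mcR;\fS_d\geo)_{(\bullet,\one)}$ extending the $E_1$-action of horizontal juxtaposition, by placing the input configurations into the little squares and connecting each of them down to the global basepoint. Given $(\iota_1,\dots,\iota_r)\in E_2(r)$ and configurations $(P_i,\psi_i)\in\Hur(\mcR;\fS_d\geo)_{(\bullet,\one)}$ for $1\le i\le r$, I would set $P:=\bigsqcup_{i=1}^r\iota_i(P_i)\subset\mcR$, transport each $\psi_i$ along the affine embedding $\iota_i$, form the external product \cite[Definition 5.7]{Bianchi:Hur2} to obtain a configuration supported in the disjoint union $\bigsqcup_i\iota_i(\mcR)$, and finally reinterpret this as a configuration in the ambient square $\mcR$. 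The transport and external-product steps are formal and continuous; the entire content of the lemma lies in the last step, the inclusion of a configuration supported in $\bigsqcup_i\iota_i(\mcR)$ into the ambient space.

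The subtle point is the definition of the monodromy $\psi\colon\fQ(P)\to\fS_d\geo$ of the combined configuration. A generator of $\fQ(P)$ is a loop encircling a single point of some $\iota_i(P_i)$; to assign it a value one must choose a path from the local basepoint below $\iota_i(\mcR)$ down to the global basepoint, and this path is forced to detour around those little squares $\iota_j(\mcR)$ that lie lower and horizontally overlap the descent. Two such choices differ by conjugation by the total monodromies $\totmon\pa{\iota_j(P_j),\iota_j(\psi_j)}$ of the squares passed. This is precisely the ambiguity encoded by the weak braiding $\braiding$ of the preceding subsection, and it is what obstructs an $E_2$-structure on the full space $\Hur(\mcR;\fS_d\geo)$. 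Here, however, every input lies in the $(\bullet,\one)$-locus, so each such total monodromy maps to an element $\pa{h_j,\one}\in\tilde\fS_d=\cG(\fS_d\geo)$ with trivial permutation part; since conjugation in $\fS_d\geo$ factors through the projection $\tilde\fS_d\to\fS_d$, all these conjugations are trivial. Hence $\psi$ is independent of the connecting paths and is well-defined. Moreover the total monodromy of $(P,\psi)$ is the product $\pa{\sum_i h_i,\one}$, which again has trivial permutation part and even first coordinate, so the action does land back in $\Hur(\mcR;\fS_d\geo)_{(\bullet,\one)}$.

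With $\psi$ canonically defined, I would verify the remaining points. Continuity of the action map $E_2(r)\times\Hur(\mcR;\fS_d\geo)_{(\bullet,\one)}^{\,r}\to\Hur(\mcR;\fS_d\geo)_{(\bullet,\one)}$ is checked against the basis of normal neighbourhoods: on a neighbourhood where the descent combinatorics is fixed, $\psi$ is determined continuously by the $\psi_i$, and across the walls where the homotopy class of a descent path jumps, the value of $\psi$ is unchanged because the relevant conjugations vanish, so $\psi$ varies continuously throughout. The operad identities (composition, unit, and the $\Sigma_r$-action by permuting the squares) then hold on the nose: nested placement agrees with composing the affine embeddings, the external product is associative, and reordering the inputs matches the symmetric-group action, since the vanishing of the conjugations removes every potential discrepancy. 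Finally, restricting $E_2$ along the standard inclusion of $E_1$ (squares placed side by side at a common height) recovers horizontal juxtaposition, because in that configuration no descent path meets another square; this identifies the new structure as a genuine extension of the given $E_1$-structure.

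The main obstacle is the well-definedness of $\psi$ together with operadic coherence: on the full Hurwitz-Ran space the descent-path conjugations are genuinely nontrivial and only a weak braiding survives, so the whole argument hinges on showing that the $(\bullet,\one)$-condition collapses every such conjugation to the identity. Once this is in place, the continuity check and the operad axioms are routine verifications in the normal-neighbourhood topology of \cite{Bianchi:Hur2}.
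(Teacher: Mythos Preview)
Your proposal is correct and follows essentially the same approach as the paper: both define $P=\bigsqcup_i\iota_i(P_i)$ and construct $\psi$ by transporting each $\psi_i$ into the ambient square, with the key observation that the resulting $\psi$ is independent of the choices involved precisely because each input has $\tilde\fS_d$-valued total monodromy of the form $(h_i,\one)$, which acts trivially by conjugation on $\fS_d\geo$. The paper phrases the ambiguity as a choice of admissible generating set for $\pi_1(\CmP,*)$ together with extensions $\tilde\iota_i\colon\C\to\C$ of the little squares fixing $*$, while you phrase it as a choice of descent paths from local to global basepoint; these are equivalent bookkeeping devices for the same conjugation ambiguity, and the resolution is identical.
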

\begin{proof}
 Let $(\iota_1,\dots,\iota_r)\in E_2(r)$ be a configuration of little $2$-cubes, i.e. rectilinear embeddings $\iota_i\colon\mcR\hookrightarrow\mcR$, and choose an extension of each $\iota_i$ to a semi-algebraic
 homeomorphism $\tilde\iota_i\colon\C\to\C$ fixing $*=-\sqrt{-1}\in\C$.

 Let $\fc_1,\dots,\fc_r\in\Hur(\mcR;\fS_d\geo)_{(\bullet,\one)}$
 be configurations, and write $\fc_i=(P_i,\psi_i)$. We let $P=\iota_1(P_1)\sqcup\dots\sqcup \iota_r(P_r)$, and write
 $P=\set{z_{i,j}}_{1\le i\le r,1\le j\le k_i}$, with $z_{i,j}\in\iota_i(P_i)$, for suitable $k_i=|P_i|\ge0$.

 Fix an admissible generating set of $\pi_1(\CmP,*)$ represented by loops $\alpha_{i,j}$ for $1\le i\le r$ and $1\le j\le k_i$, such that $\alpha_{i,j}$ spins clockwise around $z_{i,j}$, and such that for each $1\le i\le r$
 the product $\alpha_{i,1}\dots\alpha_{i,k_i}$ is homotopic in $\CmP$ to a simple loop spinning clockwise around $\iota_i(\mcR)\subset\C$ and disjoint from $\iota_1(\mcR)\sqcup\dots\sqcup\iota_r(\mcR)$.
 
 We define a map of PMQs $\psi\colon\fQ(P)\to\fS_d\geo$ by sending $\alpha_{i,j}\mapsto \psi_i(\tilde\iota_i^{-1}(\alpha_{i,j}))$.
 The map $\psi$ does not depend on the choice of admissible generating set $\set{\alpha_{i,j}}_{1\le i\le r,1\le j\le k_i}$ with the properties above, nor on the choice of extensions $\tilde\iota_i\colon \C\to \C$ of $\iota_i\colon\mcR\to \C$ with the properties above,
 as a consequence of the fact that each configuration $\fc_i$ has a $\tilde\fS_d$-valued total monodromy $\totmon(\fc_i)$ that acts trivially by conjugation on $\fS_d\geo$, and hence $\totmon(\fc_i)$ also acts trivially by global conjugation on
 the space $\Hur(\fS_d\geo)$, and in particular on its subspace $\Hur_{+,(\bullet,\one)}$.
 
 We then let the operation $(\iota_1,\dots,\iota_r)\in E_2(r)$ act on the $r$-tuple of inputs
 $(\fc_1,\dots,\fc_r)$ by giving the constructed output $(P,\psi)$. This defines an action of the operad $E_2$ on $\Hur(\mcR;\fS_d\geo)_{(\bullet,\one)}$.
See also Figure \ref{fig:action_7}.
\end{proof}

\begin{figure}[ht]
\centering
\begin{overpic}[scale=0.7]{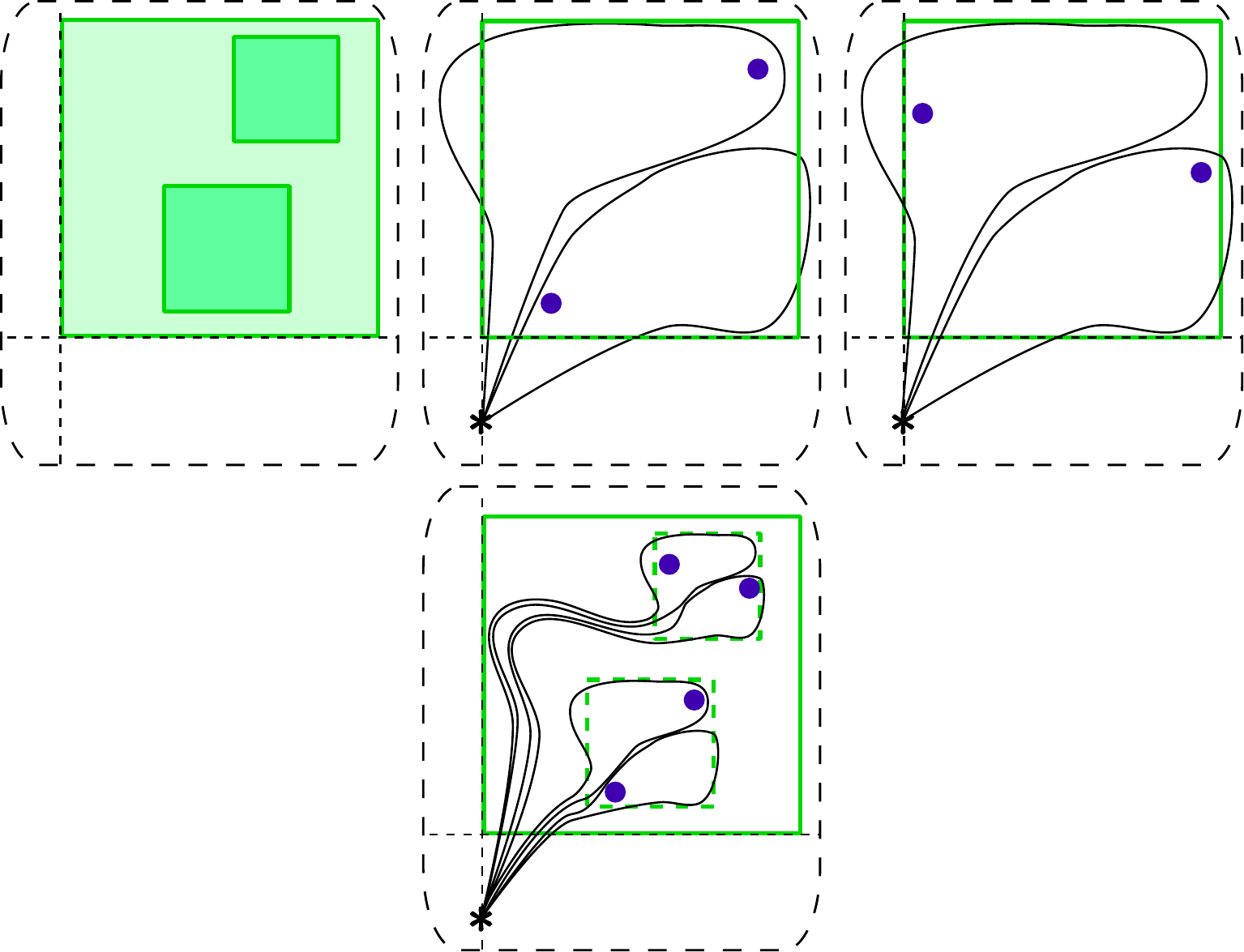}
\put(17,55){\small$\iota_1$}
\put(22,70){\small$\iota_2$}

\put(45,44){\small$\tilde\iota^{-1}_1(\alpha_{1,1})$}
\put(36,69){\small$\tilde\iota^{-1}_1(\alpha_{1,2})$}
\put(46,52){\small$\iota_1^{-1}(z_{1,1})$}
\put(50,67){\small$\iota_1^{-1}(z_{1,2})$}

\put(79,44){\small$\tilde\iota^{-1}_2(\alpha_{2,1})$}
\put(80,71){\small$\tilde\iota^{-1}_2(\alpha_{2,2})$}
\put(86,59){\small$\iota_2^{-1}(z_{2,1})$}
\put(73,64){\small$\iota_2^{-1}(z_{2,2})$}

\put(52,14){\small$\alpha_{1,1}$}
\put(47,20){\small$\alpha_{1,2}$}
\put(55,24){\small$\alpha_{2,1}$}
\put(46,31){\small$\alpha_{2,2}$}

\end{overpic}
\vspace{0.4cm}
\caption{
On top, a configuration $(\iota_1,\iota_2)\in E_2(2)$
and two configurations in $\Hur(\mcR;\fS_d\geo)_{(\bullet,\one)}$, whose $\fS_d\geo$-valued local monodromies
are not represented.
On bottom, the configuration in $\Hur(\mcR;\fS_d\geo)_{(\bullet,\one)}$ arising as $E_2$-composition.
}
\label{fig:action_7}
\end{figure}

We can consider the subspace $\Hur(\mcR;(\fS_d\geo)_+)$, containing configurations $(P,\psi)$ with $\psi\colon\fQ(P)\to\fS_d\geo$ being an augmented map of PMQs; the $E_2$-algebra structure from
Lemma \ref{lem:E2structure} restricts to an $E_2$-algebra structure on the corresponding subspace
$\Hur(\mcR;(\fS_d\geo)_+)_{(\bullet,\one)}$.

The standard inclusion of augmented PMQs $\fS_d\geo\subset\fS_{d+1}\geo$ gives rise to an inclusion of $E_2$-algebras
\[
\Hur(\mcR;(\fS_d\geo)_+)_{(\bullet,\one)}\subset \Hur(\mcR;(\fS_{d+1}\geo)_+)_{(\bullet,\one)},
\]
with image a union of connected components.
\begin{nota}
 We denote by $\Hur(\mcR;(\fS_\infty\geo)_+)_{(\bullet,\one)}$ the $E_2$-algebra
 \[
\Hur(\mcR;(\fS_\infty\geo)_+)_{(\bullet,\one)}:= \bigcup_{d\ge2}\Hur(\mcR;(\fS_d\geo)_+)_{(\bullet,\one)}.
 \]
\end{nota}

Our next goal is to identify the homology of the group completion
\[
H_*(\Omega B \Hur(\mcR;(\fS_\infty\geo)_+)_{(\bullet,\one)}),
\]
where we consider $\Hur(\mcR;(\fS_\infty\geo)_+)_{(\bullet,\one)}$ as an $E_1$-algebra.
We will first compute
$H_*(\Omega B \Hur(\mcR;(\fS_d\geo)_+)_{(\bullet,\one)})$ for all $d\ge2$, and then take the colimit for $d\to\infty$.

\subsection{Comparison between mapping telescopes}
For each $d\ge2$, we replace the $E_2$-algebra $\Hur(\mcR;(\fS_d\geo)_+)_{(\bullet,\one)}$ by the corresponding sub-topological monoid
$\mHurm((\fS_d\geo)_+)_{(\bullet,\one)}$ of $\mHurm((\fS_d\geo)_+)$, in order to be able to use again part of the arguments of Section \ref{sec:mumford}. We fix $d \ge2$ and abbreviate $\mHurm((\fS_d\geo)_+)$ by $\mHurm_+$ throughout the subsection, to simplify formulas.

Let $e$ and $e'$ be as in Notation \ref{nota:propagators}. Let $a=\pa{1,\fc_{(1,2)}}\pa{1,\fc_{(2,3)}}\dots\pa{1,\fc_{(d-1,d)}}$ as in Subection \ref{subsec:propagators}, and let similarly $\dot a:=\pa{1,\fc_{(d-1,d)}}\pa{(1,\fc_{(d-2,d-1)}}\dots \pa{1,\fc_{(1,2)}}$. Let the neutral element $\pa{0,(\emptyset,\one)}\in\mHurm_+$ be abbreviated by $0$. Note that $\pi_0(a\dot a)=\pi_0(\dot a a)=\pi_0(e')\in\pi_0(\mHurm_+)$, as follows from the equalities
\[
\begin{split}
 \hat\totmon(a\dot a)&=\hat\totmon(a)\hat\totmon(\dot a)=\hat{\lc_d}\,\widehat{\lc_d^{-1}}= (\one,\set{1,\dots,d},2d-2)\\
 &=\widehat{\lc_d^{-1}}\,\hat{\lc_d}=\hat\totmon(\dot a)\hat\totmon(a)=\hat\totmon(\dot aa)=\hat\totmon(e')\in\widehat{\fS_d\geo}.
\end{split}
\]
Passing to $\tilde\fS_d$-valued total monodromies, we obtain the equality $\totmon(a\dot a)=\totmon(\dot aa)=\totmon(e')=(2d-2,\one)\in\tilde\fS_d$. In particular, using that the element $(2d-2,\one)$ acts trivially by conjugation on $\fS_d\geo$, and thanks to the weak braiding of $\mHurm_+$ already recalled in the proof of Lemma \ref{lem:braiding}, we obtain that the six maps $e'\cdot-$, $-\cdot e'$, $a\dot a\cdot -$, $-\cdot a\dot a$, $\dot a a\cdot -$ and $-\cdot\dot aa$ are all homotopic to each other when considered as maps $\mHurm_+\to\mHurm_+$.

The mapping telescopes
$\Tel(\mHurm_{+,(\bullet,\one)},0;e')$ and $\Tel(\mHurm_+,a;e')$
can be compared as follows:
\begin{itemize}
 \item there is a map, as in Subsection \ref{subsec:maptel},
 \[
  \br^{\Tel}_a\colon
  \Tel(\mHurm_{+,(\bullet,\one)},0;e')\to\Tel(\mHurm_+,a;e');
 \]
 \item there is a map
 \[
  \br^{\Tel}_{\dot a}\colon
  \Tel(\mHurm_+,a;e')\to\Tel(\mHurm_{+,(\bullet,\one)},a\dot a;e');
 \]
we may then consider $\Tel(\mHurm_{+,(\bullet,\one)},a\dot a;e')$ as a subspace of
$\Tel(\mHurm_{+,(\bullet,\one)},0;e')$, since both spaces are homotopy colimits of sequences of spaces, and the first sequence is obtained from the second by deleting the very first term.
\end{itemize}
We obtain a commutative diagram of homology groups as follows, whose rows have
$H_*(\Tel(\mHurm_{+,(\bullet,\one)},0;e'))$ and $H_*(\Tel(\mHurm_+,a;e'))$
as colimits, respectively
\[
\begin{tikzcd}[row sep=30pt]
 H_*(\mHurm_{+,\one})\ar[r,"e'\cdot-"]\ar[d,"-\cdot a"] & H_*(\mHurm_{+,\hat\totmon(e')})\ar[r,"e'\cdot-"] \ar[d,"-\cdot a"] &
 H_*(\mHurm_{+,\hat\totmon(e'\cdot e')})\ar[r,"e'\cdot-"] \ar[d,"-\cdot a"] &\dots \\
 H_*(\mHurm_{+,\hat\lc_d})\ar[r,"e'\cdot-"] \ar[ur,"-\cdot \dot a"] & H_*(\mHurm_{+,\hat\lc_d\cdot\hat\totmon(e')})\ar[r,"e'\cdot-"]  \ar[ur,"-\cdot \dot a"]&
 H_*(\mHurm_{+,\hat\lc_d\cdot \hat\totmon(e'\cdot e')})\ar[r,"e'\cdot-"] \ar[ur,"-\cdot \dot a"] &\dots \\
\end{tikzcd}
\]
It follows that the maps $\br^{\Tel}_{a}$ and $\br^{\Tel}_{\dot a}$ induce homology isomorphisms between $H_*(\Tel(\mHurm_{+,(\bullet,\one)},0;e'))$ and $H_*(\Tel(\mHurm_+,a;e'))$.

The group completion theorem can be applied to the (weakly) braided topological monoid $\mHurm_{+,(\bullet,\one)}$, for which $e'$ is a propagator:
indeed $\pi_0(\mHurm_{+,(\bullet,\one)})$ is generated by the elements $\hat\tr\cdot\hat\tr\in\widehat{\fS_d\geo}$ for $\tr$ ranging among transpositions in $\fS_d\geo$,
and again by \cite[Proposition 7.11]{Bianchi:Hur1}, for each transposition $\tr$ we can factor $e'$ inside $\widehat{\fS_d\geo}$ as a product of $\hat\tr\cdot\hat\tr$ and some other element, whose image in $\tilde\fS_d$ along the natural map $\widehat{\fS_d\geo}\to\tilde\fS_d$ is $(2d-4,\one)$.
We obtain a commutative diagram
\[
 \begin{tikzcd}
 H_*(\Tel(\mHurm_{+,(\bullet,\one)},0;e'))\ar[r,"\br^{\Tel}_{a}","\cong"']\ar[d,"\Xi_{\mHurm_{+,(\bullet,\one)},0;e'}","\cong"'] & H_*(\Tel(\mHurm_+,a;e')) \ar[d,"\Xi_{\mHurm_+,a;e'}","\cong"']\\
 H_*(\Omega_0 B\mHurm_{+,(\bullet,\one)})\ar[r]& H_*(\Omega_0B\mHurm_+),
 \end{tikzcd}
\]
where the bottom horizontal map is the map induced by the inclusion of monoids $\mHurm_{+,(\bullet,\one)}\hookrightarrow\mHurm_+$. We conclude therefore the following lemma.
\begin{lem}
\label{lem:Hiso0mHurmonetomHurm}
 The inclusion of monoids $\mHurm((\fS_d\geo)_+)_{(\bullet,\one)}\hookrightarrow\mHurm(\fS_d\geo)_+$ induces a homology isomorphism
 $H_*(\Omega_0 B\mHurm((\fS_d\geo)_+)_{(\bullet,\one)})\cong H_*(\Omega_0B\mHurm(\fS_d\geo)_+)$.
\end{lem}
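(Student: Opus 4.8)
The plan is to express both group completions as targets of the group-completion map out of suitable mapping telescopes, and then to compare those telescopes directly.

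First I would invoke the group-completion theorem for each of the two (weakly) braided monoids. For the ambient monoid $\mHurm_+$ the element $e'$ is a propagator: $\pi_0(\mHurm_+)\cong\widehat{\fS_d\geo}$ is generated by the classes $\hat\tr$ of transpositions, and by \cite[Proposition 7.11]{Bianchi:Hur1} the class $\pi_0(e')$ admits a factorisation into $2d-2$ transpositions beginning with any prescribed $\hat\tr$; hence the map $\Xi_{\mHurm_+,a;e'}\colon\Tel(\mHurm_+,a;e')\to\Omega_0B\mHurm_+$ of Subsection \ref{subsec:maptel} is a homology equivalence. Running the same argument inside $\mHurm_{+,(\bullet,\one)}$, whose $\pi_0$ is generated by the squares $\hat\tr\cdot\hat\tr$ and where $e'$ again factors compatibly (each square completing to $e'$ with a complementary factor of $\tilde\fS_d$-image $(2d-4,\one)$), shows that $e'$ is a propagator there as well, so that $\Xi_{\mHurm_{+,(\bullet,\one)},0;e'}\colon\Tel(\mHurm_{+,(\bullet,\one)},0;e')\to\Omega_0B\mHurm_{+,(\bullet,\one)}$ is a homology equivalence too.

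Next I would compare the two telescopes through the right-multiplication maps $\br^{\Tel}_a$ and $\br^{\Tel}_{\dot a}$. The decisive input is the identity $\totmon(a\dot a)=\totmon(\dot a a)=\totmon(e')=(2d-2,\one)\in\tilde\fS_d$, whose underlying permutation $\one$ conjugates trivially; together with the weak braiding of $\mHurm_+$ used already in the proof of Lemma \ref{lem:braiding}, this renders the maps $e'\cdot-$, $-\cdot e'$, $a\dot a\cdot-$ and $\dot a a\cdot-$ mutually homotopic. Feeding these into the two-row diagram of homology groups, with $-\cdot a$ descending and $-\cdot\dot a$ ascending diagonally, identifies the colimits of the two rows and shows that $\br^{\Tel}_a$ induces an isomorphism $H_*(\Tel(\mHurm_{+,(\bullet,\one)},0;e'))\cong H_*(\Tel(\mHurm_+,a;e'))$.

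Finally I would assemble the commutative square with top edge $\br^{\Tel}_a$, vertical edges the two maps $\Xi$ above, and bottom edge the map induced by the inclusion $\mHurm_{+,(\bullet,\one)}\hookrightarrow\mHurm_+$. Three of its four arrows are homology isomorphisms---the top by the previous paragraph, the two verticals by the first---so the bottom arrow is one as well, which is precisely the claim. I expect the main obstacle to lie in the first step: verifying honestly that $e'$ remains a propagator of the \emph{restricted} monoid $\mHurm_{+,(\bullet,\one)}$, i.e. that every even-parity, identity-monodromy component can be completed to a power of $e'$ within that submonoid, which is exactly where the generation of $\pi_0$ by the squares $\hat\tr\cdot\hat\tr$ and the factorisation result in $\widehat{\fS_d\geo}$ do the essential work.
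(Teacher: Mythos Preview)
Your proposal is correct and follows essentially the same route as the paper: both argue by applying the group-completion theorem to each monoid with $e'$ as propagator, compare the resulting telescopes via the right-multiplication maps $\br^{\Tel}_a$ and $\br^{\Tel}_{\dot a}$ using the weak braiding and the equality $\totmon(a\dot a)=\totmon(\dot a a)=\totmon(e')=(2d-2,\one)$, and conclude by the three-out-of-four argument in the resulting square. The only cosmetic difference is that the paper first establishes the telescope comparison and then checks that $e'$ is a propagator for the restricted monoid, whereas you treat the two propagator verifications together at the start; the paper also records all six homotopic maps ($e'\cdot-$, $-\cdot e'$, $a\dot a\cdot-$, $-\cdot a\dot a$, $\dot aa\cdot-$, $-\cdot\dot aa$) rather than four, but the diagram only needs that $-\cdot a\dot a$ and $-\cdot\dot a a$ agree with $e'\cdot-$.
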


\subsection{Enveloping groups of \texorpdfstring{$\pi_0$}{pi0}}
We fix $d\ge2$ also in this subsection, and abbreviate $\mHurm((\fS_d\geo)_+)$ by $\mHurm_+$.
We want now to determine the enveloping group of
the abelian monoid $\pi_0(\mHurm_{+,(\bullet,\one)}))$; in other words, we want to compute the fundamental group
$\pi_1(B \mHurm_{+,(\bullet,\one)})$.

\begin{lem}
\label{lem:cGmHurmone}
The map $\pi_1(B \mHurm_{+,(\bullet,\one)})\to\pi_1(B\mHurm_+)\cong\tilde\fS_d$ induced by the inclusion
of monoids $\mHurm_{+,(\bullet,\one)}\hookrightarrow\mHurm_+$ is injective, with image the subgroup
$2\Z\subset\tilde\fS_d$.
\end{lem}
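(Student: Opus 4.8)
The plan is to transport the whole question into the algebra of the monoids of connected components, via the standard identification $\pi_1(BM)\cong\cG(\pi_0 M)$ for a topological monoid $M$, where $\cG$ denotes the universal enveloping group of the monoid $\pi_0 M$. Applied to $M=\mHurm((\fS_d\geo)_+)$ this gives $\pi_1(B\mHurm((\fS_d\geo)_+))\cong\cG(\widehat{\fS_d\geo})\cong\tilde\fS_d$, where I use $\pi_0(\mHurm((\fS_d\geo)_+))\cong\widehat{\fS_d\geo}$ from \cite[Theorem 2.15]{Bianchi:Hur3} and the fact that the enveloping group of the completion agrees with $\cG(\fS_d\geo)=\tilde\fS_d$ from \cite[Lemma 7.2]{Bianchi:Hur1}. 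Applied to $M=\mHurm((\fS_d\geo)_+)_{(\bullet,\one)}$ it identifies $\pi_1(B\mHurm((\fS_d\geo)_+)_{(\bullet,\one)})$ with $\cG(M_0)$, where I abbreviate $M_0:=\pi_0(\mHurm((\fS_d\geo)_+)_{(\bullet,\one)})$. Under these identifications the map in the statement is simply $\cG$ applied to the inclusion of monoids $M_0\hookrightarrow\widehat{\fS_d\geo}$, so it remains to analyse this purely combinatorial map.

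First I would settle the image. By definition $M_0$ consists of those elements of $\widehat{\fS_d\geo}$ whose $\tilde\fS_d$-valued total monodromy $\omega$ equals $(h,\one)$ for some even $h$; since $\omega$ is exactly the enveloping-group map $\widehat{\fS_d\geo}\to\tilde\fS_d$, the induced homomorphism $\cG(M_0)\to\tilde\fS_d$ factors through the subgroup $2\Z\times\set{\one}\cong 2\Z$. Recalling from Subsection \ref{subsec:propagators} that $M_0$ is generated by the elements $\hat\tr\cdot\hat\tr$ for transpositions $\tr$, and that each such generator maps to $(2,\one)$, the image is all of $2\Z$.

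The heart of the matter is injectivity, and for this it suffices to show $\cG(M_0)\cong\Z$. As $M_0$ is generated by the classes $[\hat\tr\cdot\hat\tr]$, it is enough to prove that these classes all coincide in $\cG(M_0)$: then $\cG(M_0)$ is cyclic, and a cyclic group surjecting onto $2\Z\cong\Z$ is infinite cyclic and maps isomorphically, giving both injectivity and image $2\Z$. To identify the generators I would invoke the classification of $\widehat{\fS_d\geo}$ by triples $(\sigma;\fP;r)$ from \cite[Proposition 7.13]{Bianchi:Hur1}. For three distinct indices $i,j,k$ the two products $\widehat{(i,j)}\cdot\widehat{(i,j)}\cdot\widehat{(i,k)}\cdot\widehat{(i,k)}$ and $\widehat{(j,k)}\cdot\widehat{(j,k)}\cdot\widehat{(i,k)}\cdot\widehat{(i,k)}$ each have trivial permutation part, the same block partition (a single non-singleton block $\set{i,j,k}$ together with singletons), and the same norm $4$; hence by \cite[Proposition 7.13]{Bianchi:Hur1} they are literally the same element of $\widehat{\fS_d\geo}$. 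Pushing this equality into the group $\cG(M_0)$ and cancelling the common factor $[\widehat{(i,k)}\cdot\widehat{(i,k)}]$ yields $[\widehat{(i,j)}\cdot\widehat{(i,j)}]=[\widehat{(j,k)}\cdot\widehat{(j,k)}]$. Thus any two transpositions on a common triple have equal classes, and since overlapping triples connect all transpositions for $d\ge3$ (the case $d\le2$ being trivial, as there is at most one transposition), all generator classes agree.

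The main obstacle is exactly this last identification. The natural first attempt—to conjugate one transposition into another using the weak braiding of $\mHurm((\fS_d\geo)_+)$, as in the proof of Lemma \ref{lem:braiding}—breaks down on the components $\mHurm((\fS_d\geo)_+)_{(\bullet,\one)}$: there the ambient total monodromy has the form $(h,\one)$ and so acts trivially by conjugation, and the braiding only delivers commutativity of $M_0$, not a way to rotate $(i,j)$ into $(j,k)$. The resolution is to descend one level of refinement, exploiting that $\widehat{\fS_d\geo}$ records only the partition and the $r$-data, so that two manifestly different transposition words represent one and the same element of $\widehat{\fS_d\geo}$. This coincidence, which is invisible in the enveloping group $\tilde\fS_d$ itself, is precisely what forces the generators of $\cG(M_0)$ to collapse to a single class and thereby pins down $\cG(M_0)\cong 2\Z$.
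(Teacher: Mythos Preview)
Your argument is correct and follows the same overall strategy as the paper: both reduce to computing the enveloping group of the commutative monoid $M_0=\pi_0(\mHurm_{+,(\bullet,\one)})\subset\widehat{\fS_d\geo}$ using the explicit classification of $\widehat{\fS_d\geo}$ from \cite[Proposition~7.13]{Bianchi:Hur1}. The difference lies in how the computation is organised. The paper observes that multiplication by $\pi_0(e')$ sends every element $(\one;\fP_1,\dots,\fP_\ell;r_1,\dots,r_\ell)$ of $M_0$ into the cofinal submonoid $\{(\one;\set{1,\dots,d};h)\,|\,h\in 2\N\}\cong\N$, so that $\cG(M_0)$ agrees with the enveloping group of this copy of $\N$, which maps isomorphically to $2\Z\subset\tilde\fS_d$. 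You instead exhibit a single explicit relation $\widehat{(i,j)}^2\,\widehat{(i,k)}^2=\widehat{(j,k)}^2\,\widehat{(i,k)}^2$ in $M_0$ (both sides have triple $(\one;\set{i,j,k}\cup\text{singletons};4,0,\dots,0)$), cancel the common right factor in $\cG(M_0)$, and chain over overlapping transpositions to collapse all generators $[\hat\tr^2]$ to one. The paper's route is slightly more economical, reusing the propagator $e'$ already in play; your route is more self-contained and avoids the localisation step, at the cost of the small connectedness argument on transpositions. One minor remark: the generation of $M_0$ by the elements $\hat\tr\cdot\hat\tr$ that you invoke is stated in the paper in the subsection on comparison of mapping telescopes rather than in Subsection~\ref{subsec:propagators}.
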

\begin{proof}
Using \cite[Propositions 7.11 and 7.13]{Bianchi:Hur1} we can
identify $\pi_0(\mHurm_{+,(\bullet,\one)}))$ with the subset of
$\widehat{\fS_d\geo}=\pi_0(\mHurm_+)$ containing tuples of the following form,
see the notation of \cite[Proposition 7.13]{Bianchi:Hur1}:
\[
(\one;\fP_1,\dots,\fP_\ell;r_1,\dots,r_\ell).
\]
Note that all numbers $r_i$ are automatically even, by the conditions
imposed by \cite[Proposition 7.13]{Bianchi:Hur1}.
Multiplication by $\pi_0(e')$ transforms the above element of $\widehat{\fS_d\geo}$ into the element
\[
 (\one;\set{1,\dots,d};r_1+\dots+r_\ell+2d-2)\in\widehat{\fS_d\geo};
\]
it follows that $\cG(\pi_0(\mHurm_{+,(\bullet,\one)}))$ coincides with the enveloping group of the submonoid of $\widehat{\fS_d\geo}$
spanned by elements of the form $(\one,\set{1,\dots,d},h)$ for $h\ge2$ even; on this submonoid the composition
is just given by summing the third components, and the enveloping group of this submonoid can be identified
with the subgroup of $\tilde\fS_d$ of elements of the form $(h,\one)$, with $h\in2\Z$
\end{proof}

In order to put together Lemmas \ref{lem:Hiso0mHurmonetomHurm} and \ref{lem:cGmHurmone}, we need to replace
$\Omega B\mHurm_+$ by its sub-$E_1$-algebra of connected components mapping to $2\Z$ under the map
$\pi_0\colon \Omega B\mHurm_+\to \pi_0(\Omega B\mHurm_+)\cong\tilde\fS_d$:
roughly speaking, this is a sub-$E_1$-algebra of ``index $\fS_d$'' in $\Omega B\mHurm_+$.
\begin{nota}
 We denote by $\Omega_{(\bullet,\one)} B\mHurm_+\subset \Omega B\mHurm_+$ the aforementioned sub-$E_1$-algebra of $\Omega B\mHurm_+$.
\end{nota}
The $E_1$-algebra $\Omega_{(\bullet,\one)}B\mHurm_+$ is the loop space of some
covering space of $B\mHurm_+$ with $\fS_d$ as group of deck transformations,
and this covering space can be identified, up to homotopy,
with the space $\bHurm_+(\fS_d\geo,\fS_d)_{\one}$ from \cite[Definition 2.4]{Bianchi:Hur3}:
\begin{itemize}
 \item \cite[Theorem 4.1]{Bianchi:Hur3} provides a weak homotopy equivalence
\[
 B\mHurm((\fS_d\geo)_+)\simeq \Hur(\bdiamolr,\bdel;\fS_d\geo,\fS_d)_{\fS_d,\fS_d^{op}};
\]
\item the space $\Hur(\bdiamolr,\bdel;\fS_d\geo,\fS_d)_{\fS_d,\fS_d^{op}}$ is by definition the quotient of another space, denoted
$\Hur(\bdiamolr,\bdel;\fS_d\geo,\fS_d)_{\zdiamleft,\zdiamright}$, by a free, properly discontinuous action of the group $\fS_d\times\fS_d^{op}$;
\item the action of the subgroup $\fS_d^{op}\subset\fS_d\times\fS_d^{op}$ is free and transitive on connected
components of $\Hur(\bdiamolr,\bdel;\fS_d\geo,\fS_d)_{\zdiamleft,\zdiamright}$;
\item the quotient
\[
\pa{\Hur(\bdiamolr,\bdel;\fS_d\geo,\fS_d)_{\zdiamleft,\zdiamright}}/\fS_d^{op}
\]
is thus connected and carries a residual action of $\fS_d$; we can identify this partial quotient with
the component $\Hur(\bdiamolr,\bdel;\fS_d\geo,\fS_d)_{\zdiamleft,\zdiamright;\one}$ of configurations in
$\Hur(\bdiamolr,\bdel;\fS_d\geo,\fS_d)_{\zdiamleft,\zdiamright}$ with $\fS_d$-valued total monodromy equal to $\one\in\fS_d$;
the corresponding action of $\fS_d$ can be identified with the action by global conjugation of $\fS_d$ on
$\Hur(\bdiamolr,\bdel;\fS_d\geo,\fS_d)_{\zdiamleft,\zdiamright;\one}$;
\item the composite map of monoids
\[
\begin{tikzcd}[column sep=10pt]
\widehat{\fS_d\geo}\ar[r,"\cong"] & \pi_0(\mHurm((\fS_d\geo)_+))\ar[r,"\bs"]& \pi_0(\Omega B\mHurm((\fS_d\geo)_+))
\ar[d,"\cong"] & \\
 & & \pi_1\pa{\Hur(\bdiamolr,\bdel;\fS_d\geo,\fS_d)_{\fS_d,\fS_d^{op}}}\ar[r,"\mathrm{deck}"] & \fS_d
\end{tikzcd}
\]
coincides with the canonical map of PMQs $\widehat{\fS_d\geo}\to\fS_d$; here
``$\mathrm{deck}$'' denotes the map of groups corresponding to the action of $\pi_1(\Hur(\bdiamolr,\bdel;\fS_d\geo,\fS_d)_{\fS_d,\fS_d^{op}})$ by deck transformations on the covering space
\[
\begin{tikzcd}
\Hur(\bdiamolr,\bdel;\fS_d\geo,\fS_d)_{\zdiamleft,\zdiamright;\one}\ar[d] & \\
\pa{\Hur(\bdiamolr,\bdel;\fS_d\geo,\fS_d)_{\zdiamleft,\zdiamright;\one}}/\fS_d\ar[r,equal]&\Hur(\bdiamolr,\bdel;\fS_d\geo,\fS_d)_{\fS_d,\fS_d^{op}};
\end{tikzcd}
\]
this fact can be checked by analysing the behaviour of the generators $\hat\tr\in\widehat{\fS_d\geo}$ along the composition, where $\tr\in\fS_d\geo$ is a transposition;
\item it follows that the weak homotopy equivalence obtained by looping  \cite[Theorem 4.1]{Bianchi:Hur3}
 \[
\Omega B\mHurm((\fS_d\geo)_+)\simeq \Omega\Hur(\bdiamolr,\bdel;\fS_d\geo,\fS_d)_{\fS_d,\fS_d^{op}}
 \]
 restricts to a weak homotopy equivalence
 \[
 \Omega_{(\bullet,\one)} B\mHurm((\fS_d\geo)_+)\simeq \Omega\Hur(\bdiamolr,\bdel;\fS_d\geo,\fS_d)_{\zdiamleft,\zdiamright;\one}
 \]
\item the space $\Hur(\bdiamolr,\bdel;\fS_d\geo,\fS_d)_{\zdiamleft,\zdiamright;\one}$ is weakly equivalent to the topological mo\-noid
$\bHurm_+(\fS_d\geo,\fS_d)_{\one}$, as already observed in \cite[Subsection 4.4]{Bianchi:Hur3}.
\end{itemize}

We obtain the following lemma.
\begin{lem}
\label{lem:firstHequiv}
 The map $\Omega B\mHurm((\fS_d\geo)_+)_{(\bullet,\one)}\to\Omega B\mHurm((\fS_d\geo)_+)$ induced by the inclusion
 of monoids $\mHurm((\fS_d\geo)_+)_{(\bullet,\one)}\hookrightarrow\mHurm((\fS_d\geo)_+)$ has image in the subspace
 $\Omega_{(\bullet,\one)}B\mHurm((\fS_d\geo)_+)\simeq \Omega\bHurm_+(\fS_d\geo,\fS_d)_{\one}$,
 and induces an isomorphism on homology groups
 \[
  H_*(\Omega B\mHurm((\fS_d\geo)_+)_{(\bullet,\one)})\overset{\cong}{\to}
 H_*(\Omega_{(\bullet,\one)}B\mHurm((\fS_d\geo)_+))\cong H_*(\Omega\bHurm_+(\fS_d\geo,\fS_d)_{\one}).
 \]
\end{lem}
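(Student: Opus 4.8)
The plan is to deduce Lemma \ref{lem:firstHequiv} from the two preceding lemmas by exploiting the fact that $\Omega B\mHurm((\fS_d\geo)_+)_{(\bullet,\one)}$ and $\Omega_{(\bullet,\one)}B\mHurm((\fS_d\geo)_+)$ are group-like $H$-spaces, so that a homology isomorphism on the basepoint component automatically propagates to every component. Abbreviate $\mHurm_+=\mHurm((\fS_d\geo)_+)$ and $\mHurm_{+,(\bullet,\one)}=\mHurm((\fS_d\geo)_+)_{(\bullet,\one)}$ as in the surrounding subsection.

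First I would pin down the image. Applying the bar construction and then $\Omega$ to the monoid inclusion $\mHurm_{+,(\bullet,\one)}\hookrightarrow\mHurm_+$ yields a loop map, hence in particular an $H$-map compatible with the translation action of $\pi_0$ on source and target. On $\pi_0$ this map is exactly the homomorphism $\pi_1(B\mHurm_{+,(\bullet,\one)})\to\pi_1(B\mHurm_+)\cong\tilde\fS_d$ analysed in Lemma \ref{lem:cGmHurmone}, which is injective with image the subgroup $2\Z\subset\tilde\fS_d$. Since $\Omega_{(\bullet,\one)}B\mHurm_+$ is by definition the union of those components of $\Omega B\mHurm_+$ indexed by $2\Z$, the looped map factors through $\Omega_{(\bullet,\one)}B\mHurm_+$ and is a \emph{bijection} on $\pi_0$; this gives the image claim.

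Next I would use group-likeness. The source $\Omega B\mHurm_{+,(\bullet,\one)}$ is a loop space, and the target $\Omega_{(\bullet,\one)}B\mHurm_+$ is a union of components of the loop space $\Omega B\mHurm_+$ indexed by a \emph{subgroup} of $\pi_0$, hence closed under the $H$-inverse; both are therefore group-like. In a group-like $H$-space every translation by a $\pi_0$-element is a homotopy equivalence, so each component is homotopy equivalent to the identity component, and because our map is an $H$-map these translations intertwine its restrictions to the various components with its restriction to the identity components. The identity component of $\Omega B\mHurm_{+,(\bullet,\one)}$ is $\Omega_0B\mHurm_{+,(\bullet,\one)}$, and that of $\Omega_{(\bullet,\one)}B\mHurm_+$ is $\Omega_0B\mHurm_+$ (the index $0\in2\Z$ corresponds to $\one\in\tilde\fS_d$); on these the map is precisely the one shown to be a homology isomorphism in Lemma \ref{lem:Hiso0mHurmonetomHurm}. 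Transporting this isomorphism along the translations, and using the $\pi_0$-bijection from the previous paragraph, I obtain that the map is a homology isomorphism onto $\Omega_{(\bullet,\one)}B\mHurm_+$; the weak equivalence $\Omega_{(\bullet,\one)}B\mHurm_+\simeq\Omega\bHurm_+(\fS_d\geo,\fS_d)_{\one}$ recorded just before the lemma then identifies the target and finishes the argument.

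The main obstacle I anticipate is not any deep computation but the careful bookkeeping that the induced map genuinely respects the $H$-space translations, so that the single-component statement of Lemma \ref{lem:Hiso0mHurmonetomHurm} can legitimately be spread over all components; this rests on the induced map being a loop map, equivalently a map of $E_1$-algebras after group completion, which follows from functoriality of the bar construction applied to the monoid inclusion. A secondary point to check is that $\Omega_{(\bullet,\one)}B\mHurm_+$ really is group-like, i.e. that $2\Z$ is a subgroup and not merely a submonoid of $\tilde\fS_d$, which is immediate, and that its identity component coincides with $\Omega_0B\mHurm_+$.
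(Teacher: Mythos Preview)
Your proposal is correct and matches the paper's approach: the paper presents Lemma \ref{lem:firstHequiv} as an immediate consequence (``We obtain the following lemma'') of Lemmas \ref{lem:Hiso0mHurmonetomHurm} and \ref{lem:cGmHurmone} together with the preceding identification of $\Omega_{(\bullet,\one)}B\mHurm_+$ with $\Omega\bHurm_+(\fS_d\geo,\fS_d)_{\one}$, leaving implicit exactly the $\pi_0$-bijection plus group-like translation argument that you have spelled out. Your explicit justification that the looped inclusion is an $H$-map, so that the single-component homology isomorphism propagates to all components, is precisely the detail the paper assumes the reader will supply.
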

In fact both loop spaces $\Omega B\mHurm((\fS_d\geo)_+)_{(\bullet,\one)}$
and $\Omega\bHurm_+(\fS_d\geo,\fS_d)_{\one}$
are double loop spaces:
\begin{itemize}
 \item on the one hand we have $\Omega B\mHurm((\fS_d\geo)_+)_{(\bullet,\one)}\simeq \Omega^2B^2(\Hur(\mcR;(\fS_d\geo)_+)_{(\bullet,\one)})$,
where $B^2(\Hur(\mcR;(\fS_d\geo)_+)_{(\bullet,\one)})$ denotes the double bar construction
of the $E_2$-algebra $\Hur(\mcR;(\fS_d\geo)_+)_{(\bullet,\one)}$;
\item on the other hand we have
$\Omega\bHurm_+(\fS_d\geo,\fS_d)_{\one}\simeq \Omega^2 B\bHurm_+(\fS_d\geo,\fS_d)_{\one}$,
since the topological monoid $\bHurm_+(\fS_d\geo,\fS_d)_{\one}$
is connected, hence group-like.
\end{itemize}
The map $\Omega B\mHurm((\fS_d\geo)_+)_{(\bullet,\one)}\to
\Omega\bHurm_+(\fS_d\geo,\fS_d)_{\one}$ from Lemma \ref{lem:firstHequiv} is in fact a map of double loop spaces.
We can now use again \cite[Theorem 4.1]{Bianchi:Hur3} and write a sequence of weak equivalences,
homeomorphisms and equalities of double loop spaces
\[
\begin{split}
\Omega\bHurm_+(\fS_d\geo,\fS_d)_{\one}&=\Omega\bHurm_+(\fS_d\geo,\fS_d)\simeq
 \Omega(\Omega B\bHurm_+(\fS_d\geo,\fS_d))\\
 &=\Omega^2 (B\bHurm_+(\fS_d\geo,\fS_d))
 \simeq
 \Omega^2(\Hur(\diamo,\del\diamo;\fS_d\geo,\fS_d)_{\fS_d,\fS_d^{op}})\\
 &\cong
 \Omega^2(\Hur(\diamo,\del\diamo;\fS_d\geo,\fS_d)_{\zdiamleft,\zdiamright;\one})\simeq
 \Omega^2\Hur(\cR,\del\cR;\fS_d\geo,\fS_d)_{\one}.
\end{split}
\]
We have used that the monoid $\bHurm_+(\fS_d\geo,\fS_d)$ is group-like \cite[Theorem 2.19]{Bianchi:Hur3},
that a loop space does not change by adding connected components to a space, and that a double loop space
does not change up to homeomorphism by replacing a space with a covering of it: in fact
$\Hur(\diamo,\del\diamo;\fS_d\geo,\fS_d)_{\zdiamleft,\zdiamright;\one}$ is a covering space
of $\Hur(\diamo,\del\diamo;\fS_d\geo,\fS_d)_{\fS_d,\fS_d^{op}}$, with $\fS_d$ as group of deck transformations.

Using Lemma \ref{lem:firstHequiv} we thus obtain a map of double loops spaces which is a homology equivalence
\[
 \Omega B\mHurm((\fS_d\geo)_+)_{(\bullet,\one)}\to \Omega^2\Hur(\cR,\del\cR;\fS_d\geo,\fS_d)_{\one}.
\]
The arguments of this subsection (in particular those relying on \cite{Bianchi:Hur3}) are natural with respect
to the inclusions of PMQ-group pairs $(\fS_d\geo,\fS_d)\hookrightarrow(\fS_{d+1}\geo,\fS_{d+1})$. Letting $d$ tend
to infinity, we thus obtain a map of double loops spaces which is a homology equivalence
\[
 \Omega B\mHurm((\fS_\infty\geo)_+)_{(\bullet,\one)}\to \Omega^2\bB_\infty.
\]
In the last formula $\Omega B\mHurm((\fS_\infty\geo)_+)_{(\bullet,\one)}$ denotes the homotopy colimit
of the sequence $\Omega B\mHurm((\fS_d\geo)_+)_{(\bullet,\one)}$, for $d\to\infty$;
we can compute the same homotopy colimit using the double loop spaces
$\Omega^2 B^2(\Hur(\mcR;(\fS_d\geo)_+)_{(\bullet,\one)})$, thus obtaining a map of double loop spaces
which is a homology equivalence
\[
 \Omega^2 B^2(\Hur(\mcR;(\fS_\infty\geo)_+)_{(\bullet,\one)})\to \Omega^2\bB_\infty.
\]
Here we use that both $\Omega^2$ and $B^2$ are functors that preserve sequential homotopy colimits.
We can then deloop twice the previous, i.e. apply again the functor $B^2$ from $E_2$-algebras to simply connected spaces, which sends homology equivalences
to homology equivalences; we obtain a homology equivalence of simply connected spaces
\[
B^2(\Hur(\mcR;(\fS_\infty\geo)_+)_{(\bullet,\one)})\to \bB_\infty,
\]
which is therefore a weak homotopy equivalence;
a posteriori we understand that we have been working with weak equivalences, instead of just homology equivalences, essentially in the entire subsection!

\subsection{\texorpdfstring{$E_2$}{E2}-algebras from moduli spaces}
\begin{defn}
\label{defn:scM}
 Let $d\ge1$. A \emph{$d$-little Riemann surface} is a compact, possibly disconnected Riemann surface $\cS$ with $d$ ordered boundary components $\del_1\cS,\dots,\del_d\cS$, such that each connected component of $\cS$ touches $\del\cS$, and such that 
 $\del_i$ is endowed with a \emph{little parametrisation}: by this we mean that there is a collar neighbourhood $\del_i\cS\subset U_i\subset\cS$ and a continuous map $\mathfrak{u}_i\colon U_i\to\cR=[0,1]^2\subset\C$ which is a homeomorphism onto its image, sends $\del_i\cS$ homeomorphically to $\del\cR$,
 and restricts to a holomorphic map on $U_i\setminus\cS$; moreover, we are only considering $[U_i,\mathfrak{u}_i]$ as a germ of parametrised collar neighbourhood of $\del_i\cS$, i.e. as an equivalence class of parametrised collar neighbourhoods of $\del_i\cS$ up to the equivalence relation given by agreement of parametrisations on some smaller collar neighbourhood: such a germ is what we mean by \emph{little parametrisation} of a boundary component of $\cS$.
 
 We denote by $\scM_d$ the moduli space of $d$-little Riemann surfaces. Two $d$-little Riemann surfaces are considered equivalent if there is a biholomorphism between them which is compatible with the ordering of boundary components and with the little parametrisations of boundary components.
\end{defn}
The space $\scM_d$ is disconnected, and there is a connected component for every tuple $(\fP_1,\dots,\fP_\ell;2g_1,\dots,2g_\ell)$,
where $\fP_1,\dots,\fP_\ell$ is a partition of $\set{1,\dots,d}$ into $1\le \ell\le d$ non-empty subsets, and $2g_1,\dots,2g_\ell$ are even numbers $\ge0$: this connected component contains $d$-little Riemann surfaces $\cS$ having precisely $\ell$ components $\cS(1),\dots,\cS(\ell)$ of genera $g_1,\dots,g_\ell$, and such that $\del_i\cS\subset\cS(j)$ if and only if $i\in \fP_j$. The use of even numbers instead of just natural numbers will become clear later.

Given an operation $(\iota_1,\dots,\iota_r)\in E_2(r)$ and given $d$-little surfaces $\cS_1,\dots,\cS_r$, we can create a new $d$-little surface $\cS$ by sewing
$\cS_1\sqcup\dots\sqcup\cS_r$ together with $d$ ordered copies of $\cR\setminus\pa{\iota_1(\mcR)\sqcup\dots\sqcup\iota_r(\mcR)}$: we sew $\del_i\cS_j$ with the $i$\textsuperscript{th} copy of $\del(\iota_j(\mcR))$, leveraging on the fact that both curves are identified with $\del\cR$, either by translation and dilation, or by the little parametrisation. The Riemann structure on the sewing locus is defined in the evident way. The outer boundary $\del\cR$
of each copy of $\cR\setminus\pa{\iota_1(\mcR)\sqcup\dots\iota_r(\mcR)}$ is endowed with a tautological little parametrisation. The previous construction leads to the following, which we state as a lemma.
\begin{lem}
 For all $d\ge1$ the space $\scM_d$ is an $E_2$-algebra.
\end{lem}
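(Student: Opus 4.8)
The plan is to take as structure maps exactly the sewing construction described in the paragraph preceding the lemma, and then to check that it defines a continuous action of the operad $E_2$. So, given $(\iota_1,\dots,\iota_r)\in E_2(r)$ and $d$-little Riemann surfaces $\cS_1,\dots,\cS_r$, the output is the surface $\cS$ obtained by sewing $\cS_1\sqcup\dots\sqcup\cS_r$ onto $d$ ordered copies of $\cR\setminus(\iota_1(\mcR)\sqcup\dots\sqcup\iota_r(\mcR))$. First I would check that this descends to moduli and lands in $\scM_d$. Because a little parametrisation is only a germ of collar chart, the sewing uses an arbitrarily thin collar and so does not depend on the chosen representatives; since each $\mathfrak{u}_i$ is holomorphic on the interior of its collar and the complement pieces carry the restriction of the standard structure of $\C$, the complex structures match along every sewing curve and assemble into a genuine Riemann structure on $\cS$, as in Definition \ref{defn:scM}. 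A biholomorphism of an input induces a biholomorphism of the output respecting all little parametrisations, so the operation is well defined on equivalence classes. Finally $\cS$ has exactly $d$ ordered boundary components, the outer boundaries $\del\cR$ of the $d$ complement pieces, with their tautological little parametrisations, and each component of $\cS$ meets $\del\cS$ because each component of each $\cS_j$ already met some $\del_i\cS_j$, which is sewn to a complement piece touching the outer boundary. Hence $\cS\in\scM_d$.

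Next I would verify the operad axioms. Equivariance under $\fS_r$ is immediate: permuting the cubes and the inputs simultaneously relabels the inner holes of each complement piece together with the surfaces glued into them, giving a canonically biholomorphic output. The essential axiom is compatibility with operad composition, i.e. associativity of the sewing. Here the key point, which I would emphasise, is that the embeddings are \emph{rectilinear}, $\iota_i(z)=\lambda_i z+\hat z_i$, hence restrict to biholomorphisms of $\C$. Consequently, for a composite configuration $\gamma(\iota_1,\dots,\iota_r;\beta^{(1)},\dots,\beta^{(r)})$ the associated complement region decomposes canonically: it is the outer complement $\cR\setminus\bigsqcup_i\iota_i(\mcR)$ glued along the curves $\del\iota_i(\mcR)$ to the images under $\iota_i$ of the inner complements determined by $\beta^{(i)}$, and this identification carries standard structure to standard structure precisely because the $\iota_i$ are affine. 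Sewing the inputs into the inner cubes first and then into the outer cubes therefore produces a surface canonically biholomorphic to the one obtained by sewing along the composite configuration, compatibly with boundary orderings and little parametrisations; this is the required equality of maps into $\scM_d$. The unit is handled as in Lemma \ref{lem:E2structure}: the identity embedding is excluded from $E_2(1)$ by the disjointness requirement, but $E_2(1)$ is contractible and its action is coherent, so $\scM_d$ is an algebra over this little $2$-cubes operad in exactly the sense used there.

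The hard part will be continuity of the structure maps for the natural moduli topology on $\scM_d$; this is the only genuinely technical point and I expect it to be the main obstacle. The plan is to describe the topology on $\scM_d$ through families of $d$-little Riemann surfaces over a base, as in the classifying-space discussion of Subsection \ref{subsec:classifyingspaces}, and then to show that the sewn complex structure depends continuously both on the moduli of the inputs and on the little-cube data $(\iota_1,\dots,\iota_r)\in E_2(r)$. The argument should parallel the continuity verification underlying Lemma \ref{lem:E2structure} for the Hurwitz--Ran spaces: a small deformation of the inputs and of the cubes alters the glued structure only by a small deformation supported away from the sewing curves, so the resulting point of $\scM_d$ moves continuously. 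Granting continuity, the checks above exhibit the sewing operations as an action of the operad $E_2$ on $\scM_d$, completing the proof.
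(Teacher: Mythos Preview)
Your proposal is correct and uses exactly the same construction as the paper; the paper, however, treats the lemma as essentially definitional, writing only ``The previous construction leads to the following, which we state as a lemma'' and omitting the explicit verifications of well-definedness, operad axioms, and continuity that you outline. Your expansion is reasonable, but be aware that the paper evidently regards these checks as routine and does not spell them out.
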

Adjoining a copy of $\cR$, whose boundary is labeled $d+1$, gives an injective map of $E_2$-algebras $\scM_d\hookrightarrow\scM_{d+1}$.
\begin{nota}
 We denote by $\scM_\infty$ a homotopy colimit in the category of $E_2$-algebras of the
 sequence $\scM_1\hookrightarrow\scM_2\hookrightarrow\scM_3\hookrightarrow\dots$.
\end{nota}

For all $d\ge2$ we can construct a map of $E_2$-algebras
\[
\bc_d\colon \Hur(\mcR;\fS_d\geo)_{(\bullet,\one)}\to \scM_d
\]
as follows: given a configuration $(P,\psi)\in\Hur(\mcR;\fS_d\geo)_{(\bullet,\one)}$, we consider the $d$-fold, possibly disconnected cover $f\colon\mathring{\cS}\to\CmP$ with monodromy induced by $\psi$, we compactify $\mathring{\cS}$
to a closed, possibly disconnected, smooth Riemann surface $\bar\cS$ admitting a $d$-fold branch cover map  $f\colon\bar\cS\to\C P^1$,
and finally we take $\cS=f^{-1}(\cR)$: the fact that $\totmon(P,\psi)\in\tilde\fS_d$ has the form $(h,\one)$, for some $h\in2\Z$,
implies that $\cS$ has $d$ boundary curves; these boundary curves have a canonical order, and the very projection $f$ endowes each of them with a little parametrisation.

The explicit description of $\pi_0(\scM_d)$ given above can be compared with the description of $\pi_0(\Hur(\mcR;(\fS_d\geo)_+)_{(\bullet,\one)}$ derived from \cite[Proposition 7.13]{Bianchi:Hur1}: the conclusion is that, for all $d\ge2$, the map $\bc_d$ induces a bijection on path components
$\pi_0(\Hur(\mcR;\fS_d\geo)_{(\bullet,\one)})\cong\pi_0(\scM_d)$.

For all $d\ge2$ the following diagram of $E_2$-algebras commutes strictly
\[
 \begin{tikzcd}
  \Hur(\mcR;\fS_d\geo)_{(\bullet,\one)}\ar[r,hook]\ar[d,"\bc_d"] & \Hur(\mcR;\fS_{d+1}\geo)_{(\bullet,\one)}\ar[d,"\bc_{d+1}"]\\
  \scM_d\ar[r,hook] &\scM_{d+1}
 \end{tikzcd}
\]
\begin{nota}
 We denote by $\bc_\infty\colon \Hur(\mcR;\fS_\infty\geo)_{(\bullet,\one)}\to\scM_\infty$ the map of $E_2$-algebras between the homotopy colimits induced by the sequence of maps $\bc_d$ for $d\ge2$.
\end{nota}

The theorem of Madsen and Weiss identifies $B^2\scM_d$ up to weak equivalence with
$\Omega^{\infty-2}\MTSO(2)$ for all $d\ge2$,
in such a way that the map of spaces $B^2\scM_d\to B^2\scM_{d+1}$ induced by the inclusion of $E_2$-algebras
$\scM_d\hookrightarrow\scM_{d+1}$ can be identified up to homotopy with the identity of $\Omega^{\infty-2}\MTSO(2)$.
In particular we also have $B^2\scM_\infty\simeq\Omega^{\infty-2}\MTSO(2)$, and looping twice we have
a weak equivalence of double loop spaces
\[
\Omega^2B^2\scM_\infty\simeq \Omega^\infty\MTSO(2).
\]
Our next aim is to prove that the map of double loop spaces
\[
 \Omega^2B^2\bc_\infty \colon \Omega^2B^2 \Hur(\mcR;\fS_\infty\geo)_{(\bullet,\one)}\to\Omega^2B^2\scM_\infty
\]
is a weak equivalence. The fact that each map $\bc_d$ is a bijection on path components
implies that also $\bc_\infty$ is a bijection on path components, and hence also $\Omega^2B^2\bc_\infty$
is a bijection on path components.
We thus only have to prove that 
$\Omega^2_0B^2\bc_\infty \colon \Omega^2_0B^2 \Hur(\mcR;\fS_\infty\geo)_{(\bullet,\one)}\to\Omega^2_0B^2\scM_\infty$
is a weak equivalence.

We can identify up to homotopy the previous map with the composition of weak homotopy equivalences
\[
\begin{split}
\Omega^2_0B^2 \Hur(\mcR;\fS_\infty\geo)_{(\bullet,\one)} &\simeq \Omega_0B\mHurm((\fS_\infty\geo)_+)_{(\bullet,\one)}\\
&\simeq \Omega_0B\mHurm((\fS_\infty\geo)_+)\\
&\simeq \pa{\mHurm((\fS_\infty\geo)_+)_{\kld{\infty}_\infty}}^+ \simeq \pa{\fM_{\infty,1}}^+\\
&\simeq \Omega^2_0B^2\scM_1\simeq \Omega^2_0B^2\scM_\infty.
\end{split}
\]
Here $(-)^+$ denotes the Quillen plus construction of a space, and in particular the weak equivalence
$\pa{\mHurm((\fS_\infty\geo)_+)_{\kld{\infty}_\infty}}^+ \simeq \pa{\fM_{\infty,1}}^+$ is induced
by the zig-zag of weak equivalences from Lemma \ref{lem:zigzag}.

Delooping twice we obtain a weak equivalence
\[
 B^2 \Hur(\mcR;\fS_\infty\geo)_{(\bullet,\one)} \simeq B^2\scM_\infty\simeq \Omega^{\infty-2}\MTSO(2).
\]
Finally, using the weak equivalence $B^2(\Hur(\mcR;(\fS_\infty\geo)_+)_{(\bullet,\one)})\to \bB_\infty$
from the previous subsection, we have completed the proof of Theorem \ref{thm:main5}.

\appendix
\section{Deferred proofs}
\subsection{Continuity of \texorpdfstring{$\ccv$}{ccv}}
\label{subsec:ccvcontinuous}
Recall Definition \ref{defn:ccOud}. To prove continuity of $\check\cv$, it suffices to prove continuity of the composite
\[
 \tilde{\ccv}\colon \tccOud\to\ccOud\overset{\check\cv}{\to} \Hur(\C,(\fS_d\geo)_+)_{\klud_g}
\]
where the first map is the quotient map $(\fr,f)\mapsto [\fr,f]$.

Recall from Subsection \ref{subsec:classifyingspaces} that there is a surface bundle $\pr_{\ccO}\colon\scF^{\ccO}_{g,n}\to\ccOud$
with fibres Riemann surfaces of type $\sgn$ (i.e., each fibre has genus $g$ and is endowed with $n$ directed marked points), and there is a continuous map $f_{\ccO}\colon\scF^{\ccO}_{g,n}\to\C P^1$ restricting to $\ud$-directed meromorphic functions on fibres of $\pr_{\ccO}$. In fact, the surface bundle $p_{\ccO}$ is obtained from the surface bundle
\[
 \tilde\pr_{\ccO}\colon \tilde\scF^{\ccO}_{g,n}:=\sgn\times \tccOud\to\tccOud,
\]
where the bundle map is projection on the second factor: more precisely,
one quotients the total space by the diagonal action of $\Diff_{g,n}$, and the base space by the action of $\Diff_{g,n}$.
Each fibre of $\tilde\pr_{\ccO}$ is a Riemann surface of type $\sgn$ (in fact, it is a copy of $\sgn$ with a Riemann structure $\fr$ dictated by the selected point $(\fr,f)$ on the base space).
The composite $\tilde\scF^{\ccO}_{g,n}\to\scF^{\ccO}_{g,n}\overset{f_{\ccO}}{\to}\C P^1$ is a continuous map $\tilde f_{\ccO}$ restricting to a $\ud$-directed meromorphic function on fibres of $\tilde\pr_{\ccO}$: more precisely, $\tilde f_{\ccO}$ restricts to the map $f\colon\sgn\to\C P^1$ on the fibre $\sgn\times(\fr,f)=\tilde\pr_{\ccO}^{-1}(\fr,f)$.

Let now $(\fr,f)$ be a point in $\tccOud$, and let $(P,\psi)=\tilde{\ccv}(\fr,f)$.
We want to show that the preimage along $\tilde{\ccv}$ of a sufficiently small neighbourhood of $(P,\psi)$ contains a neighbourhood of $(\fr,f)$.
Let $V\subset\bH_{>\Im(*_P)}$ be a convex, compact set containing all points of $P\cup\set{0}$ in its interior.
Write $P=\set{z_1,\dots,z_k}$ and let $\uU=(U_1,\dots,U_k)$ be an adapted covering of $P$: the open sets $U_i\subset\C$ are convex, relatively compact and have disjoint closures; we further assume $U_i\subset V$ for all $1\le i\le k$. Let
\[
\uzeta=\set{\zeta_{i,j}\,|\,1\le i\le k,1\le j\le\lambda_i}\subset\sgn\setminus\uQ
\]
be the set of critical points of $f$ lying over $\C$, with $\zeta_{i,j}$ lying over $z_i$, and let $\cS\subset\sgn$ be a compact subsurface of $\sgn$ with the following property: the image along $f$ of $\sgn\setminus\cS$ is a relatively compact subset of $\uU\cup \C P^1\setminus V$. Such $\cS$ can be taken as the complement in $\sgn$ of a small neighbourhood of $\uQ\cup\uzeta$; in particular, let us assume that $\cS$ is a surface of genus $g$ with $n+\sum_{i=1}^k\lambda_i$ boundary curves. We denote by $\beta_{i,j}\subset\del\cS$ the curve bounding a disc $D_{i,j}\subset\sgn$ that contains $\zeta_{i,j}$, for $1\le i\le k$ and $1\le j\le \lambda_i$; and we denote by $\beta_{\infty,i}\subset\del\cS$ the curve bounding a disc $D_i\subset\sgn$ that contains $Q_i$, for $1\le i\le n$.

Let $\tilde\scF^{\ccO}_{\cS}:=\cS\times\tccOud  \subset \tilde\scF^{\ccO}_{g,n}$, and consider
$df_{\ccO}$ as a section of the vertical cotangent bundle of $\tilde\pr_{\ccO}$, defined only over the subspace
$\tilde\scF^{\ccO}_{\cS}$: the restriction of $df_{\ccO}$ over $\cS\times(\fr',f')$
is defined to be the differential $df'$, for all $(f',\fr')\in\tccOud$.
Note that both $f'$ and $df'$ have singularities near the points $Q_i$, and that is one reason why we restrict $df_{\ccO}$ to $\cS$. More precisely, for $p\in\sgn$, $df'_p$ is really a $\C$-linear map $T_p\sgn\to T_{f'(p)}\C P^1$, and only assuming that $f'(p)\neq \infty$ we can identify $T_{f'(p)}\C P^1= T_{f'(p)}\C\cong \C$, and thus treat $df'_p$ as an element of the dual of $T_p\sgn$.

The section $df_{\ccO}$ does not vanish on any point of $\cS\times(\fr,f)$; similarly, the function $f_{\ccO}$ admits no zero on any point of $D_i\times(\fr,f)$, for all $1\le i\le n$.
Using compactness of $\cS$ and the discs $D_i$, we can find a neighbourhood
$(\fr,f)\in \scU\subset\tccOud$ such that, for all $(\fr',f')\in\scU$, the following hold:
\begin{itemize}
 \item $df_{\ccO}$ does not vanish on any point of $\cS\times(\fr',f')$;
 \item for all $1\le i\le n$, $f_{\ccO}$ does not vanish on any point of $D_i\times (\fr',f')$.
\end{itemize}
This implies that, for $(\fr',f')\in\scU$, the function $f'\colon(\sgn,\fr')\to\C P^1$ admits no critical points in $\cS\subset\sgn$.
Up to shrinking $\scU$, we can also assume the following, again using the compactness of $\cS$: for 
$(\fr',f')\in\scU$, the image of $\cS$ along $f'$ is relatively compactly contained in $\uU\cup \C P^1\setminus V$. It follows
that for $(\fr',f')\in\scU$, the critical values of $f'\colon(\sgn,\fr')\to\C P^1$ are contained in $\uU\cup \C P^1\setminus V$.

The next step is to prove that for $(\fr',f')\in\scU$ and for all $1\le i\le n$, the function $f'\colon(\sgn,\fr')\to\C P^1$ admits no critical point $p\in D_i$, except, possibly, the point $Q_i$. Suppose instead that $\set{p_1,\dots,p_l}\subset D_i\setminus\set{Q_i}\subset\sgn\setminus\set{Q_i}$ are the critical points of $f'$ contained in $D_i$ and different from $Q_i$. Then $1/f'$ is a holomorphic function on $D_i$ (here we use that $f'$ admits no zero on $D_i$), and we can apply the Riemann-Hurwitz formula to compute
\[
 1=\chi(D_i)=\mathrm{wind}_{d(1/f')}(\beta_{\infty,i})+\mathrm{ind}_{d(1/f')}(Q_i)+\sum_{j=1}^l\mathrm{ind}_{d(1/f')}(p_j).
\]
Here $\mathrm{wind}_{d(1/f')}$ is the winding number of $d(1/f')=-df'/(f')^2$ along the curve $\beta_{\infty,i}$, which is well-defined because $d(1/f')$ does not vanish along $\beta_{\infty,i}$; similarly, we denote by $\mathrm{ind}_{d(1/f')}(p_j)$ and $\mathrm{ind}_{d(1/f')}(Q_i)$ the indices of $d(1/f')$ at the critical points $p_j$ and at $Q_i$. We can compare the above computation with the similar one obtained using $1/f$, recalling that the only critical point of $f$ in $D_i$ is, possibly, $Q_i$:
\[
 1=\chi(D_i)=\mathrm{wind}_{d(1/f)}(\beta_{\infty,i})+\mathrm{ind}_{d(1/f)}(Q_i)=-(d_i-2)+(d_i-1)
\]
We can now use the continuity of the winding number along the curve $\beta_{\infty,i}\subset\sgn$ with respect to no-where-vanishing sections of the cotangent bundle of $\sgn$ along $\beta_{\infty,i}$: up to shrinking $\scU$, we can assume the equality
\[
\mathrm{wind}_{d(1/f')}(\beta_{\infty,i})=\mathrm{wind}_{d(1/f)}(\beta_{\infty,i})=-(d_i-2).
\]
Moreover, since $f'$ is $\ud$-adapted, we have $\mathrm{ind}_{d(1/f')}(Q_i)=d_i-1$. It follows that
$\sum_{j=1}^l\mathrm{ind}_{d(1/f')}(p_j)=0$, and since each summand is strictly positive, we conclude that this is an empty sum, i.e. $l=0$.
This implies that for all $(\fr',f')\in\scU$, the set $P'$ of critical values of $f'$ in $\C$ is contained in $\uU\subset\C$.

Write $P'=\set{z'_{i,m}\,|\, 1\le i\le k,1\le m\le k'_i}$, for some integers $k'_i\ge0$, such that $z'_{i,m}$ is contained in $U_i$ (soon we will see that actually $k'_i\ge1$).
For all $1\le i\le k$, $1\le m\le k'_i$ and $1\le j\le \lambda_i$ we denote by
$\zeta'_{i,m,j,1},\dots,\zeta'_{i,m,j,\lambda'_{i,m,j}}$ the critical points of $f'$ lying in $D_{i,j}$ and lying over $z'_{i,m}$. We can then compute
\[
 1=\chi(D_{i,j})=\mathrm{wind}_{d(1/f')}(\beta_{i,j})+\sum_{m=1}^{k'_i}\sum_{r=1}^{\lambda'_{i,m,j}}\mathrm{ind}_{d(1/f')}(\zeta'_{i,m,j,r}).
\]
Similarly as above, up to shrinking $\scU$ we can assume the equality $\mathrm{wind}_{d(1/f')}(\beta_{i,j})=\mathrm{wind}_{d(1/f)}(\beta_{i,j})$ for all $1\le i\le k$ and $1\le j\le\lambda_i$; we thus obtain, for all $1\le i\le k$ and $1\le j\le \lambda_i$ the equality
\[
\sum_{m=1}^{k'_i}\sum_{r=1}^{\lambda'_{i,m,j}}\mathrm{ind}_{d(1/f')}(\zeta'_{i,j,m,r})=\mathrm{ind}_{d(1/f)}(\zeta_{i,j}).
\]
Summing over $j$, and setting $\lambda'_{i,m}=\sum_{j=1}^{\lambda_i}\lambda'_{i,m,j}$, we obtain
\[
 \sum_{m=1}^{k'_i}\pa{d-\lambda'_{i,m}}=d-\lambda_i.
\]
In particular, since for all $1\le i\le k$ we have $d-\lambda_i>0$, we must also have $k'_i>0$.

If $(P',\psi')=\tilde{\ccv}(\fr',f')$, then we can let $(\alpha'_{i,m})_{1\le i\le k,1\le m\le k'_i}$ be loops representing an admissible generating set of $\pi_1(\CmP',*_{P'})$. The inclusion $\C\setminus\uU\subset\CmP'$ induces an inclusion of fundamental groups $\pi_1(\C\setminus\uU,*_{\uU})\subseteq\pi_1(\CmP',*_{\uU})$. The former group can be identified with $\pi_1(\CmP,*_P)$, whereas the second can be identified with $\pi_1(\CmP',*_{P'})$: both identification come from inclusion of subspaces of $\C$ and translations of basepoints along straight segments. We thus get an inclusion
$\pi_1(\CmP,*_P)\subset\pi_1(\CmP,*_{P'})$. We can assume that for all $1\le i\le k$, the product
$\alpha'_{i,1}\dots\alpha'_{i,k'_i}\in\pi_1(\CmP',*_{P'})$ corresponds to the image, under the above inclusion, of an element $\alpha_i\in\pi_1(\CmP,*_P)$, such that $\alpha_1,\dots,\alpha_k$ form an admissible generating set of
$\pi_1(\CmP,*_P)$. The above formula becomes, for all $1\le i\le k$, the equality
\[
 \sum_{m=1}^{k'_i}N(\psi'(\alpha'_{i,m}))=N(\psi(\alpha_i)).
\]
If we prove that the product of permutations $\psi'(\alpha'_{i,1})\dots\psi'(\alpha'_{i,k'_i})$ equals $\psi(\alpha_i)$ in the symmetric group $\fS_d$, then the equality
\[
 \psi'(\alpha'_{i,1})\dots\psi'(\alpha'_{i,k'_i})=\psi(\alpha_i)
\]
also holds in $\fS_d\geo$, and we could conclude that $(P',\psi')$ lies in the neighbourhood $\fU(P,\psi;\uU)$ of $(P,\psi)$ in $\Hur(\C,(\fS_d\geo)_+)_{\klud_g}$;
we would then have that $\tilde{\ccv}$ sends the entire neighbourhood $\scU$ of $(\fr,f)$ inside $\fU(P,\psi;\uU)$,
thus concluding the proof of continuity of $\tilde{\ccv}$, and hence the proof of continuity of $\ccv$.

Use translations of basepoints along straight segments to identify, for all $P'\subset\uU$ (including $P$), $\pi_1(\CmP',*_{P'})\cong \pi_1(\CmP',*_{\uU})$. Denote by $\bar\uU$ the closure of $\uU$ in $\C$, and
suppose that $\alpha_i$ is a smooth, immersed simple loop $[0,1]\to\C\setminus\bar\uU$ based at $*_{\uU}$; denote by $\sigma_i=\psi(\alpha_i)\in\fS_d\geo$, and regard $\psi(\alpha_i)$ as a permutation in $\fS_d$. For all $1\le j\le d$ we can lift $\alpha_i$ along $f$ to a smooth, immersed path $\alpha_{i,(j)}\colon[0,1]\to\sgn$, starting at the point labeled $j$ in $f^{-1}(*_{\uU})$ and ending at the point labeled $\sigma_i(j)$: here we use that $\Im(*_{\uU})\le\Im(*_{P})$, so $f^{-1}(*_{\uU})$ also comes with an identification with $\set{1,\dots,d}$.

Let $I_{\uU}\subset\C P^1$ be the segment starting at $*_{\uU}$ and running horizontally towards right in $\C$ to $\infty$. For all $1\le j\le d$
denote by $I_{\uU,j}\subset\sgn$ the lift of $I_{\uU}$ along $f$ starting at the point $*_{\uU,j}\in f^{-1}(*_{\uU})$ labelled $j$. We parametrise $I_{\uU,j}$ as a smooth path $[0,1]\to\sgn$ exiting from $Q_{\ell(j)}$ with velocity $\mu(j)\cdot X_{\ell(j)}$,
for some $1\le\ell(j)\le n$ and some $0\le\mu(j)\le d_{\ell(j)}$: compare with Figure \ref{fig:moduli_3}.

We define a piecewise smooth path $\tilde\alpha_{i,(j)}\colon[0,3]\to\sgn\setminus f^{-1}(\bar\uU)$ by concatenating $I_{\uU,j}$, $\alpha_{i,(j)}$ and the inverse of $I_{\uU,\sigma_i(j)}$, on the segments $[0,1]$, $[1,2]$ and $[2,3]$ respectively.

Using that $\sgn\setminus f^{-1}(\bar\uU)$ is open in $\sgn$ and $[0,3]$ is compact, up to shrinking $\scU$ we may assume that
for all $(\fr',f')\in\scU$ the following hold:
\begin{itemize}
 \item $f'\circ\tilde\alpha_{i,(j)}\colon [0,3]\to\C P^1$ is a piecewise smooth loop based at $\infty$ and taking values in $\C P^1\setminus \bar\uU$.
\end{itemize}
Moreover, up to shrinking $\scU$, and using again compactness of $[0,3]$, we can assume that the smooth functions $f,f'\colon\sgn\to\C P^1$ are close enough in the $C^{\infty}$-topology, and that the Riemann structures $\fr,\fr'$ are also close enough with respect to the $C^{\infty}$-topology, that there is an oblique hyperplane of the form
 \[
\set{\Im-\Re<\upsilon}=\set{z\in\C\,|\,\Im(z)-\Re(z)<\upsilon}\subset\C,
 \]
containing $\bar\uU$ and such that the following holds:
 \begin{itemize}
 \item for all $1\le i\le k$ and $1\le j\le d$, the loops $f'\circ\tilde\alpha_{i,(j)})\colon[0,3]\to\C P^1\setminus\bar\uU$ and
 $f\circ\tilde\alpha_{i,(j)}\colon[0,3]\to\C P^1\setminus\bar\uU$, which are loops based at $\infty$, actually take value inside $\set{\Im-\Re<\upsilon}\setminus\bar\uU$,
 and they are based-homotopic loops inside $\set{\Im-\Re<\upsilon}\setminus\bar\uU$.
\end{itemize}
It follows that, along all identifications, $\psi'(\alpha_i)$ is a permutation sending $j\mapsto\sigma_i(j)$ for all $1\le j\le d$, just as $\psi(\alpha_i)$ does; in other words, $\psi'(\alpha_i)=\psi(\alpha_i)\in\fS_d\geo$. Since $\alpha_i=\alpha'_{i,1}\dots\alpha'_{i,k'_i}$ in $\pi_1(\CmP',*_{\uU})$, we also conclude the equality $\psi'(\alpha_i)=\psi'(\alpha'_{i,1})\dots\psi'(\alpha'_{i,k'_i})$ in $\fS_d$.
This concludes the proof of continuity of $\tilde\cv$.

\subsection{Continuity of \texorpdfstring{$\bc$}{bc}}
\label{subsec:bccontinuous}
We fix $(P,\psi)\in\Hur(\C;(\fS_d\geo)_+)_{\klud_g}$, we write $P=\set{z_1,\dots,z_k}$, we choose $\epsilon>0$ such that the open squares
\[
z_i+(-5\epsilon,5\epsilon)^2\subset\C
\]
centred at $z_i$ and of side $10\epsilon$ are pairwise disjoint for $1\le i\le k$, we set $U_i=z_i+(-\epsilon,\epsilon)^2\subset\C$ and thus obtain an adapted covering $\uU=(U_1,\dots,U_k)$ of $P$. Our aim is to prove that $\bc$ is continuous on the normal neighbourhood $\fU(P,\psi;\uU)\subset \Hur(\C;(\fS_d\geo)_+)_{\klud_g}$. We
consider $*_{\uU}$ as preferred basepoint for fundamental groups of subsets of $\C$ containing $\C\setminus\uU$: in all of the following arguments, basepoints for fundamental groups can always be translated to $*_{\uU}$ along a straight segment.

Fix simple loops $\alpha_1,\dots,\alpha_k\subseteq\C\setminus\uU$ that are disjoint away from $*_{\uU}$ and represent an admissible generating set of $\pi_1(\CmP,*_{\uU})$, and let $D_i\subset\C$ be the disc bounded by $\alpha_i$. Define also
$\sigma_i=\psi(\alpha_i)\in\fS_d\geo$, and consider $\sigma_i$ also as element in $\fS_d$. The permutation $\sigma_i$ decomposes as a product of cycles $c_{i,1}\dots c_{i,\lambda_i}$, and we denote by $1\le d_{i,j}\le d$ the length of $c_{i,j}$.

Let $\mathring{\cS}$ be the total space of the $d$-fold 
covering of $\C\setminus\uU$ associated with the monodromy $\phi\colon\pi_1(\CmP,*_{\uU})\to\fS_d$ induced by $\psi\colon\fQ(P)\to\fS_d\geo$, and compactify $\mathring{\cS}$ over $\infty$ by adjoining $n$ points $Q_1,\dots,Q_n$, in order to obtain a compact surface $\cS$ of genus $g$ with $kd-h$ boundary components. There is a branched cover map $f\colon\cS\to\C P^1\setminus\uU$ of degree $d$, branching, possibly, only over $\infty$. There are $\lambda_i$ boundary components of $\cS$ lying over $\del U_i$:
more precisely, $\partial\cS$ contains a component $\partial_{i,j}\cS$ covering $\del U_i$ with degree $d_{i,j}$, for all $1\le i\le k$ and $1\le j\le \lambda_i$: more precisely, $\partial_{i,j}\cS$ is the only component in $\del\cS$ that can be connected inside $f^{-1}(D_i\setminus U_i)$ to the $d_{i,j}$ points of $f^{-1}(*_{\uU}))$ labeled by the elements of the cycle $c_{i,j}$.

We denote by $5U_i$ the square $z_i+(-5\epsilon,5\epsilon)^2$, and we let $5U_i\setminus U_i$ be a collar neighbourhood of $\del U_i$ in $\C\setminus\uU$.
We denote by $V_{i,j}$ the connected component of $f^{-1}(5U_i\setminus U_i)\subset\cS$ containing $\del_{i,j}\cS$. Up to translations and dilations in $\C$, we can identify $5U_i\setminus U_i$ with $(-2,3)^2\setminus\mcR$, where we recall that $\mcR$ denotes the unit square $(0,1)^2\subset\C$. 
Similarly, after choosing an element in the cycle $c_{i,j}$, we can identify $V_{i,j}$ with the standard $d_{i,j}$-fold cyclic cover of $(-2,3)^2\setminus\mcR$, with trivialised fibre over $(0,-1)=-\sqrt{-1}$.

We can then consider $\cS\times\fU(P,\psi;\uU)$ as a trivial bundle over $\fU(P,\psi;\uU)$ with fibre a surface of genus $g$ with   $(\sum_{i=1}^k\lambda_i)$ boundary components; moreover each boundary component of each fibre has a collar neighbourhood parametrised by a suitable cyclic cover of $(-2,3)^2\setminus\mcR$.

Now, similarly as in \cite[Definition 3.15]{Bianchi:Hur2}, we define for all $1\le i\le k$ a continuous map $\fri^{\C}_{D_i}\colon \fU(P,\psi;\uU)\to\Hur(U_i;(\fS_d\geo)_+)_{\hat\sigma_i}$, where $\Hur(U_i;(\fS_d\geo)_+)_{\hat\sigma_i}$
denotes the subspace of $\Hur(\C;(\fS_d\geo)_+)_{\hat\sigma_i}$ of configurations supported on $U_i$, and $\hat\sigma_i\in\widehat{\fS_d\geo}$ is the image of $\sigma_i\in\fS_d\geo$ along the canonical inclusion $\fS_d\geo\hookrightarrow\widehat{\fS_d\geo}$.
Given $(P',\psi')\in\fU(P,\psi;\uU)$, we send it to the pair $(P'',\psi'')$, where
\begin{itemize}
 \item $P''=P'\cap U_i$;
 \item $\psi''\colon\fQ(P'')\to\fS_d\geo$ is the unique (augmented) map of PMQs that agrees with $\psi'$ on all loops in $D_i\setminus P''$ representing classes both in $\fQ(P'')$ and $\fQ(P')$.
\end{itemize}
We can then identify the spaces
\[
 \Hur(U_i;(\fS_d\geo)_+)_{\hat\sigma_i}\cong \prod_{j=1}^{\lambda_i}\Hur(U_i;(\fS_{c_{i,j}}\geo)_+)_{\hat c_{i,j}},
\]
and thus define maps $\fri^{\C}_{D_i,j}\colon \fU(P,\psi;\uU)\to\Hur(U_i;(\fS_{c_{i,j}}\geo)_+)_{\hat c_{i,j}}$ by composing $\fri^{\C}_{D_i}$ with the $j$\textsuperscript{th} projection, for all $1\le j\le \lambda_i$.

By rescaling and translating, we can identify $U_i$ with $\mcR$, and thus obtain maps  $\fri^{\C}_{\mcR,i,j}\colon \fU(P,\psi;\uU)\to\Hur(\mcR;(\fS_{c_{i,j}}\geo)_+)_{\hat c_{i,j}}$ for all $1\le j\le \lambda_i$. We can then use the argument from Subsection \ref{subsec:monic} and identify $\Hur(\mcR;(\fS_{c_{i,j}}\geo)_+)_{\hat c_{i,j}}$ with the space $\NMonPol_{d_{i,j}}(\mcR)$ of monic polynomials of degree $d_{i,j}$ whose critical values in $\C$ actually lie in $\mcR$.
Define
\[
\scD_d=\set{(f,z)\in\NMonPol_d(\mcR)\times\C\,|\, f(z)\in [-2,3]^2};
\]
then the projection $\pi_d\colon \scD_d\to \NMonPol_d$ given by $\pi_d\colon (f,z)\mapsto f$ is a holomorphic disc bundle over $\NMonPol_d$; moreover each fibre $\pi_d^{-1}(f)$ has a collar neighbourhood of its boundary, namely the subspace
$V_f=\set{z\in\C\,|\,f(z)\in [-2,3]^2\setminus\cR}$. This subspace is canonically identified with the standard $d$-fold cyclic
cover of $[-2,3]^2\setminus\cR$, for varying $f\in\NMonPol_d(\mcR)$.

For $(P',\psi')\in\fU(P,\psi;\uU)$ we can now retrieve $\bc(P',\psi')$ as follows:
\begin{itemize}
 \item we take a copy of $\cS$, which is a Riemann surface with $n$ directed marked points and with $(\sum_{i=1}^k\lambda_i)$ boundary curves, endowed with parametrised collar neighbourhoods;
 \item we consider, for each $1\le i\le k$ and $1\le j\le \lambda_i$, the polynomial $f_{i,j}(P',\psi'):=\fri^{\C}_{\mcR,i,j}(P',\psi')$, and take a copy of the disc $\pi_{d_{i,j}}^{-1}(f_{i,j}(P',\psi'))$, which is endowed with a parametrised collar neighbourhood $V_{f_{i,j}(P',\psi')}$ of its boundary;
 \item we use these collar neighbourhoods to glue $\cS$ with the $(\sum_{i=1}^k\lambda_i)$ discs: more precisely, we identify, for each $1\le i\le k$ and $1\le j\le \lambda_i$, the points of $V_{i,j}$ and $V_{f_{i,j}(P',\psi')}$ with an equal image in the standard $d_{i,j}$-fold cyclic cover of $(-2,3)^2\setminus\cR$ (note that we glue along an annulus which is open on both sides);
 \item we obtain a Riemann surface of type $\sgn$, represented by a Riemann structure $\fr'\in\Riem(\sgn)$; moreover we have an $\ud$-directed meromorphic function $f'\colon(\sgn,\fr')\to \C P^1$ which is defined by $f\colon\cS\to\C P^1\setminus\uU$ on $\cS$, and is defined by suitably translating and rescaling the polynomial $f'_{i,j}(P',\psi')$ on $\pi_{d_{i,j}}^{-1}(f_{i,j}(P',\psi'))$.
\end{itemize}
We note that all operations are continuous, in particular glueing a family of discs to a fixed surface with boundary along parametrised collar neighbourhoods of the boundary curves yields a family of closed Riemann surfaces. This concludes the proof that $\bc$ is continuous.

\bibliography{Bibliography4.bib}
\bibliographystyle{alpha}

\end{document}